\newcommand\reallywidehat[1]{%
\savestack{\tmpbox}{\stretchto{%
  \scaleto{%
    \scalerel*[\widthof{\ensuremath{#1}}]{\kern-.6pt\bigwedge\kern-.6pt}%
    {\rule[-\textheight/2]{1ex}{\textheight}}
  }{\textheight}%
}{0.5ex}}%
\stackon[1pt]{#1}{\tmpbox}%
}
\renewcommand{\eqref}[1]{(\ref{#1})}   
\numberwithin{equation}{section}
\theoremstyle{plain}
\newtheorem{theorem}{Theorem}[section]
\newtheorem{lemma}[theorem]{Lemma}
\newtheorem{corollary}[theorem]{Corollary}
\newtheorem{proposition}[theorem]{Proposition}
\theoremstyle{definition}
\newtheorem{definition}[theorem]{Definition}
\newtheorem{remark}[theorem]{Remark}
\theoremstyle{definition}
\newcommand{\R}{{\mathbbm R}}
\newcommand{\Z}{{\mathbbm Z}}
\newcommand{\C}{{\mathbbm C}}
\newcommand{\F}{{\mathbbm F}}
\newcommand{\A}{{\mathbbm A}}
\newcommand{\ve}{\varepsilon}
\newcommand{\la}{{\langle}}
\newcommand{\ra}{{\rangle}}
\newcommand{\modp}{(\textnormal{mod }p)}
\newcommand{\modq}{(\textnormal{mod }q)}
\newcommand{\ep}{\varepsilon}
\newcommand{\tr}{\textnormal{Tr}}
\newcommand{\deter}{\textnormal{Det}}
\begin{document}

\title[Non-abelian Polya-Vinogradov inequality]{Singular Gauss sums,
 Polya-Vinogradov inequality for $GL(2)$ and growth of primitive elements}
\author{Satadal Ganguly}

\address{Indian Statistical Institute, Kolkatta 700108,  INDIA.}  
\email{sgisical@gmail.com}

\author{C.~S.~Rajan}

\address{Tata Institute of Fundamental  Research, Homi Bhabha Road,
Bombay - 400 005, INDIA.}  \email{rajan@math.tifr.res.in}

\subjclass[2010]{Primary 11T24, Secondary 20C33}

\begin{abstract} We establish an analogue of the classical
Polya-Vinogradov inequality for $GL(2, \F_p)$, where $p$ is a
prime. In the process, we compute the `singular' Gauss sums for $GL(2,
\F_p)$.  As an application, we show that the collection of elements in
$GL(2,\Z)$  whose reduction modulo $p$ are of maximal order in $GL(2,
\F_p)$ and whose matrix entries are bounded by $x$  has the expected
size as soon as  $x\gg p^{1/2+\ep}$ for any  $\ep>0$.
\end{abstract}

\maketitle

\section{Introduction}
Let $\chi$ be a non-principal
Dirichlet character of modulus $q$. The well-known
Polya-Vinogradov estimate for character sums is given by
(see \cite[Ch. 23]{Dav})
\begin{equation}\label{pv}
\sum_{x \leq n< x+y}\chi(n) =O( \sqrt{q}~\mbox{log} ~q),
\end{equation} 
where $x$ and $y>0$ are any integers. Here, by the notation
$ f(y) =O(g(y))$ or $f(y) \ll g(y)$, $f$ being any function and  
$g$ being a positive function defined on a domain
$Y$,    
we mean that there is a constant $c>0$ such the bound
$\left|f(y)\right| \leq c g(y)$ holds for all $y\in Y$. 
The trivial
bound for such a character sum is $y$ and one can easily obtain the
bound  $q$.  Thus the Polya-Vinogradov bound indicates cancellations 
in a sum of the character values along an
interval as soon as the length of the interval becomes somewhat larger than
$\sqrt{q}\log q$.

It is natural to consider what the analogue of the Polya-Vinogradov
estimate should be for groups more general than $(\Z/q\Z)^*$. To
start with, one may consider the following broad question: \\
Do cancellations occur in a sum of 
the type
\begin{equation}\label{pvgen}
 \sum_{H(A)\leq x} \chi_{\rho}(A),
\end{equation} 
where $\rho$ is a non-trivial complex representation of
$G(\Z/q\Z)$, $G$ being a linear algebraic group defined over $\Z$,
$\chi_{\rho}=\tr\circ \rho$ its character (where $\tr$ denotes the
\textit{trace} map), and $H: G(\Z) \to \R$ is some suitable height function
that measures the
`size' of $A$? An affirmative answer would amount to obtaining
non-trivial bound for this sum in terms of $q$ that is uniform over
$x$. 

The proofs of the classical Polya-Vinogradov bound 
guides us to the groups to which we should attempt to generalize it. 
All the proofs
utilize harmonic analysis on the abelian group  $\Z/q\Z$ in one way or
the other. For example, one can  expand the Dirichlet character in a
finite Fourier series in terms of the additive characters where the
Fourier coefficients are essentially the classical Gauss sums. Then
one needs to estimate a finite geometric series and use the classical
bound $O(\sqrt{q})$ for Gauss sums to obtain \eqref{pv}. 

The analogy of Gauss sums with $L$-functions and the use of abelian
harmonic analysis in the method of Tate-Godement-Jacquet for proving
analytic properties of the standard $L$-functions attached to cusp
forms on $GL(n)$  suggests that a natural generalization should be to
the group $GL(n, \Z/q\Z)$.  We restrict to the case  $q=p$, an odd
prime, for simplicity.  Apart from the group $GL(n, \Z/p\Z)$ 
being a natural generalization of the
group $GL(1,\Z/p\Z)\simeq (\Z/p\Z)^*$, the key point is that, similar
to  the classical case, we can utilize abelian harmonic analysis on
the additive group $M(n,\Z/p\Z)$, the group of $n \times n$ matrices 
over $\F_p$, to study the sum \eqref{pvgen}. Since the group $M(n,\Z/p\Z)$ is self-dual, one new aspect that arises is the evaluation of  singular Gauss sums, attached to singular matrices in $M(n,\Z/p\Z)$.

\subsection{A GL(2) Polya-Vinogradov bound}
For a matrix $A \in M(n, \Z)$, we denote by
$\bar{A}$ its reduction modulo $p$; i.e., the image of $A$ under the
reduction map $M(n, \Z)\to M(n, \F_p)$. We extend $\chi_{\rho}$ to a
function on $M(n, \F_p)$ by defining it to be zero on matrices whose
determinant vanish modulo $p$ and  consider $\chi_{\rho}$ as a
function on $M(n, \Z)$ by the reduction modulo $p$ map.
We now make a definition which is 
a natural choice for the height function:
\begin{definition}
 For $A \in M(n, \Z)$, we define
$$h(A):= Max\{|a_{ij}|\},$$  $a_{ij}$ being the $(i, j)$-th entry of
$A$.
\end{definition}

Our first main theorem is the following $GL(2)$-analogue of
 the classical Polya-Vinogradov inequality.

\begin{theorem}\label{main}
Let $\rho$ be a non-trivial  irreducible complex representation 
of the group $GL(2, \F_p)$. Let $d(\rho)$ be the dimension of $\rho$.
 Then, for any $x\geq 1$, we have the estimate
 \begin{equation}\label{mainpv}
 \sum_{A \in M(n, \mathbbm{Z}): h(A)\leq x}\chi_{\rho}(A) \ll d(\rho)p^{2}(\log p)^{4},
 \end{equation}
where the implied constant is absolute and one can take it to be $16$ if $p\geq 11$.
 \end{theorem}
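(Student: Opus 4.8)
The plan is to transplant the classical Fourier-analytic proof of the Polya--Vinogradov inequality to the self-dual additive group $M(2,\F_p)$, with the $GL(2,\F_p)$-Gauss sums computed earlier in the paper playing the role of the classical Gauss sum. One first reduces to the range $1\le x\le p$ by grouping the matrices $A$ with $h(A)\le x$ according to their residue classes modulo $p$ and using that $\sum_{R\in M(2,\F_p)}\chi_{\rho}(R)=0$ --- valid since $\rho$ is non-trivial irreducible, so that $\chi_{\rho}\perp\mathbf 1$ --- to remove the resulting main term; so assume henceforth $1\le x\le p$.

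Next I would expand $\chi_{\rho}$ in the additive characters of $M(2,\F_p)$: with $\psi(t)=e^{2\pi i t/p}$ and the self-duality afforded by the pairing $(A,B)\mapsto\psi(\tr(AB))$,
\[
\chi_{\rho}(R)=\frac1{p^{4}}\sum_{B\in M(2,\F_p)}\widehat{\chi_{\rho}}(B)\,\psi(\tr(BR)),\qquad
\widehat{\chi_{\rho}}(B)=\sum_{R\in M(2,\F_p)}\chi_{\rho}(R)\,\psi(-\tr(BR)),
\]
the $\widehat{\chi_{\rho}}(B)$ being precisely the Gauss sums attached to $(\rho,B)$. Substituting this into $S:=\sum_{h(A)\le x}\chi_{\rho}(\bar A)$ and interchanging the order of summation, the sum over the box $[-x,x]^{4}$ factors, because $\tr(B\bar A)=\sum_{i,j}b_{ij}a_{ji}$ and the entries of $A$ range independently:
\[
S=\frac1{p^{4}}\sum_{B\in M(2,\F_p)}\widehat{\chi_{\rho}}(B)\prod_{1\le i,j\le 2}\mathcal D_{x}(b_{ij}),\qquad
\mathcal D_{x}(b):=\sum_{-x\le a\le x}\psi(ab),
\]
a product of four Dirichlet-kernel sums with $\mathcal D_{x}(0)=2x+1$ and $|\mathcal D_{x}(b)|\le\frac1{2\|b/p\|}$ for $b\not\equiv0\pmod p$, whence $\sum_{b\ne0}|\mathcal D_{x}(b)|\ll p\log p$.

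It remains to insert the Gauss-sum evaluations. The term $B=0$ contributes $\frac{(2x+1)^{4}}{p^{4}}\widehat{\chi_{\rho}}(0)=\frac{(2x+1)^{4}}{p^{4}}\sum_{R}\chi_{\rho}(R)=0$. For invertible $B$, substituting $R\mapsto RB^{-1}$ together with a Schur's-lemma argument --- the operator $\sum_{R\in GL(2,\F_p)}\rho(R)\psi(-\tr(R))$ commutes with $\rho$ and hence is a scalar $\mu_{\rho}$ --- give $\widehat{\chi_{\rho}}(B)=\mu_{\rho}\,\overline{\chi_{\rho}(B)}$; and the Parseval identity $\sum_{B}|\widehat{\chi_{\rho}}(B)|^{2}=p^{4}\,|GL(2,\F_p)|$, restricted to invertible $B$, forces $|\mu_{\rho}|\le p^{2}$, so that $|\widehat{\chi_{\rho}}(B)|\le p^{2}|\chi_{\rho}(B)|\le p^{2}d(\rho)$. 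For singular $B\ne0$ one invokes the explicit formulas for the singular Gauss sums computed above, which are again $\ll p^{2}d(\rho)$. Granting the uniform bound $|\widehat{\chi_{\rho}}(B)|\ll p^{2}d(\rho)$ for $B\ne0$,
\[
|S|\le\frac1{p^{4}}\Big(\max_{B\ne0}|\widehat{\chi_{\rho}}(B)|\Big)\Big(\sum_{b\in\F_p}|\mathcal D_{x}(b)|\Big)^{4}\ll\frac{p^{2}d(\rho)}{p^{4}}\big(2x+1+p\log p\big)^{4}\ll d(\rho)\,p^{2}(\log p)^{4},
\]
using $1\le x\le p$. The clean constant $16$ for $p\ge11$ comes from a more careful bookkeeping: one isolates the factors $\mathcal D_{x}(0)$ according to the support of $B$ --- the fourfold occurrence sitting at $B=0$, where $\widehat{\chi_{\rho}}$ vanishes, while matrices with several zero entries are rank-deficient and so carry smaller Gauss sums.

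The main obstacle is the Gauss-sum input, that is, evaluating (or at least bounding by $O(p^{2}d(\rho))$) the \emph{singular} Gauss sums $\widehat{\chi_{\rho}}(B)$ for $B$ of rank one, for which there is no multiplicativity to fall back on. One parametrizes $GL(2,\F_p)$ suitably --- via the Bruhat decomposition, or via its action on $\mathbb P^{1}(\F_p)$ --- and reduces these rank-one Gauss sums to one-variable exponential sums of Gauss and Kloosterman type, the delicate point being to obtain a bound that is linear in $d(\rho)$ uniformly over all irreducible $\rho$ (the one-dimensional representations, the twists of Steinberg, the principal series and the cuspidal representations each requiring their own treatment). A further technical point is the reduction to $x\le p$ and the bookkeeping needed to extract the explicit constant.
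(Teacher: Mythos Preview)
Your proof is correct and follows the same architecture as the paper: reduce to $x\le p$ by periodicity, Fourier-expand $\chi_\rho$ over the self-dual additive group $M(2,\F_p)$, factor the box sum into four Dirichlet kernels (the paper's Lemma~\ref{sumoverB}), and feed in the uniform Gauss-sum bound $|\widehat{\chi_\rho}(B)|\le d(\rho)\,p^{2}$ for $B\ne 0$ (Theorem~\ref{gaussest-general}), invoking Theorem~\ref{theorem:sgs} for singular $B$.

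One genuine difference worth noting: for invertible $B$ you bound $|\mu_\rho|\le p^{2}$ via Parseval, namely $|\mu_\rho|^{2}\,|GL_2(\F_p)|=\sum_{B\in GL_2}|\widehat{\chi_\rho}(B)|^{2}\le\sum_{B}|\widehat{\chi_\rho}(B)|^{2}=p^{4}\,|GL_2(\F_p)|$, whereas the paper simply quotes Kondo's exact evaluation (equation~\eqref{kondo-exact}). Your argument is correct and pleasantly self-contained; the price is that you only get the inequality, not Kondo's exact value, but that is all the theorem needs.

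One small inaccuracy: your sketch of how the constant $16$ arises (``matrices with several zero entries are rank-deficient and so carry smaller Gauss sums'') is not how the paper obtains it and is not quite right --- e.g.\ $\tr(G(St,N))=p^{2}(p-1)$ is of the maximal order $d(St)\,p^{2}$, not smaller. The paper gets $16$ directly from the kernel estimate $\sum_{b\in\F_p}\min(p,\|b/p\|^{-1})\le p+p(1+\log p)\le 2p\log p$ for $p\ge 11$ (Lemma~\ref{sumoverB} with $c=1$), and $2^{4}=16$.
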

 
 We now state a more general version of the theorem from
 which Theorem \ref{main} follows easily. First we define the
 notion of a matrix interval.
 \begin{definition} \label{matint}
 By a \textit{matrix interval} over the integers we
shall mean a set $\mathbf{I}$ of the form  $\mathbf{I}=\prod_{1\leq
i,j\leq n}I_{ij}$, where each $I_{ij}$ is an interval in $\Z$; i.e.,
$I$ is the set of $n \times n$ integer matrices  $((a_{ij}))$ such
that for every fixed  pair $(i,j)$, the $(i, j)$-th entry $a_{ij}$
varies over the  component interval $I_{ij}$ in $\Z$.
\end{definition}

 A simple example of  a matrix interval to keep in mind is to take some fixed 
matrix $A_0$ and define $\mathbf{I}=\mathbf{I}(A_0, x)$ to be the collection of
matrices $A$ such that $h(A-A_0)\leq x$. 

With the above definition, our theorem is:
 \begin{theorem}\label{main2} Suppose $\mathbf{I}=\prod_{1\leq i,j\leq
2}I_{ij}$ is a matrix interval over the integers such that the length
of each component interval satisfies the bound $|I_{ij}|\leq cp$,
where $c>0$ is a constant. Then, under the same assumptions on a
representation $\rho$ as above, we have the bound
\begin{equation}\label{secondmain}
 \sum_{A \in \mathbf{I}} \chi_{\rho}(A) \ll d(\rho)p^{2}(\log p)^{4},
 \end{equation}
where the implied constant is absolute and can be taken to be 
${\left(\frac{c+3}{2}\right)}^4$ if $p\geq 11$.
\end{theorem}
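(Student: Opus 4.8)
The plan is to mimic the classical proof of \eqref{pv}: expand the class function $\chi_\rho$ on the self--dual additive group $M(2,\F_p)$ into additive characters, recognise the Fourier coefficients as Gauss sums for $GL(2,\F_p)$, and reduce the sum over $\mathbf{I}$ to a product of four one--dimensional geometric series. Fix the additive character $\psi(t)=e^{2\pi i t/p}$ of $\F_p$ and use the perfect pairing $(A,B)\mapsto\psi(\tr(AB))$ to identify $M(2,\F_p)$ with its dual. Regarding $\chi_\rho$ as the function on $M(2,\F_p)$ equal to the character of $\rho$ on $GL(2,\F_p)$ and vanishing on singular matrices, Fourier inversion gives
\[
\chi_\rho(A)=\frac1{p^{4}}\sum_{B\in M(2,\F_p)}G_\rho(B)\,\psi(-\tr(AB)),\qquad
G_\rho(B):=\sum_{C\in GL(2,\F_p)}\chi_\rho(C)\,\psi(\tr(CB)),
\]
so the Fourier coefficients are exactly the matrix Gauss sums $G_\rho(B)$. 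The two facts I would establish first --- this is where the representation theory, and in particular the computation of the \emph{singular} Gauss sums, enters --- are: (a) $G_\rho(0)=\sum_{C\in GL(2,\F_p)}\chi_\rho(C)=0$, by orthogonality, since $\rho$ is irreducible and non--trivial; and (b) $|G_\rho(B)|\le d(\rho)\,p^{2}$ for every $B\neq0$. For invertible $B$, (b) follows after the substitution $C\mapsto CB^{-1}$ together with the fact that $B\mapsto G_\rho(B)$ is a $GL(2,\F_p)$--class function, which reduces matters to the handful of conjugacy types of invertible matrices; for $B$ of rank one it is the singular Gauss sum evaluation.

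Granting (a) and (b), the theorem follows quickly. Substituting the Fourier expansion and interchanging the two finite sums,
\[
\sum_{A\in\mathbf{I}}\chi_\rho(A)=\frac1{p^{4}}\sum_{B\in M(2,\F_p)}G_\rho(B)\sum_{A\in\mathbf{I}}\psi(-\tr(AB)).
\]
Since $\tr(AB)=\sum_{1\le i,j\le2}a_{ij}b_{ji}$ and $\mathbf{I}=\prod_{i,j}I_{ij}$, the inner sum factors as $\prod_{i,j}\big(\sum_{a\in I_{ij}}\psi(-a\,b_{ji})\big)$, and each factor, being a geometric series, is at most $\min\big(|I_{ij}|,\ \frac1{2\|b_{ji}/p\|}\big)$, where $\|t\|$ denotes the distance from $t$ to the nearest integer. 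Discarding the term $B=0$ by (a) and using (b) on the rest,
\[
\Big|\sum_{A\in\mathbf{I}}\chi_\rho(A)\Big|
\le\frac{d(\rho)\,p^{2}}{p^{4}}\sum_{B\neq0}\ \prod_{1\le i,j\le2}\min\!\Big(|I_{ij}|,\tfrac1{2\|b_{ji}/p\|}\Big)
\le\frac{d(\rho)}{p^{2}}\prod_{1\le i,j\le2}\ \sum_{b=0}^{p-1}\min\!\Big(|I_{ij}|,\tfrac1{2\|b/p\|}\Big),
\]
the last inequality because the four entries of $B$ vary independently over $\F_p$. It then remains to bound each one--dimensional sum: the term $b=0$ contributes $|I_{ij}|\le cp$, while $\sum_{b=1}^{p-1}\frac1{2\|b/p\|}=p\sum_{1\le b\le(p-1)/2}b^{-1}\le p(\log p+1-\log2)$, and a short computation using $\log p\ge\log 11$ shows $|I_{ij}|+p(\log p+1-\log2)\le\frac{c+3}{2}\,p\log p$ for every $c>0$ once $p\ge11$. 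Multiplying the four factors by the prefactor $d(\rho)/p^{2}$ yields
\[
\Big|\sum_{A\in\mathbf{I}}\chi_\rho(A)\Big|\le\Big(\tfrac{c+3}{2}\Big)^{4}d(\rho)\,p^{2}(\log p)^{4},
\]
which is \eqref{secondmain}.

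The delicate point is the rank--one part of (b), i.e.\ the singular Gauss sums. For such $B$ I would write $B=\big((v_iw_j)\big)_{1\le i,j\le2}$ with non--zero vectors $v=(v_1,v_2),\,w=(w_1,w_2)$ (the parametrisation being $(p-1)$--to--one), so that $\tr(CB)=\sum_{i,j}w_i c_{ij}v_j=\langle w,Cv\rangle$ and
\[
G_\rho(B)=\sum_{u\in\F_p^{2}\setminus\{0\}}\psi(\langle w,u\rangle)\!\!\sum_{C\in GL(2,\F_p):\,Cv=u}\!\!\chi_\rho(C).
\]
By $GL(2,\F_p)$--equivariance the inner sum depends only on whether $u$ is a scalar multiple of $v$ or not, so $G_\rho(B)$ is assembled from $p$ universal quantities (one per value of $\tr B$), and the collinear ones --- sums of $\chi_\rho$ over upper--triangular matrices with a prescribed eigenvalue --- are computed explicitly, after a further finite Fourier transform, in terms of the classical $\F_p^{\times}$--Gauss sums $\tau(\nu)$; this gives $|G_\rho(B)|\le d(\rho)p^{2}$ with considerable room to spare (there being only $O(p^{3})$ rank--one matrices, forming $p$ conjugacy classes). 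I expect this singular Gauss sum computation, rather than the Fourier and geometric--series bookkeeping above, to be the main obstacle; once it and the orthogonality relation (a) are in hand, the estimate is essentially mechanical, and Theorem~\ref{main} follows from \eqref{secondmain} in a routine way.
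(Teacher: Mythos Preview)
Your proof is correct and follows essentially the same route as the paper's Section~4: Fourier inversion on $M(2,\F_p)$, the uniform Gauss--sum bound $|G_\rho(B)|\le d(\rho)p^{2}$ for $B\neq0$ (this is exactly Theorem~\ref{gaussest-general}, with the $B=0$ term vanishing by orthogonality), factorisation of the inner sum into four one--dimensional geometric series, and the elementary harmonic--sum estimate yielding the constant $\bigl(\tfrac{c+3}{2}\bigr)^4$ for $p\ge11$. The one place where your sketch departs from the paper is the \emph{input} (b) for rank--one $B$: you propose to parametrise $B=vw^{T}$ and decompose $G_\rho(B)$ over the fibres $\{C:Cv=u\}$, whereas the paper computes the singular Gauss sums exactly (Theorem~\ref{theorem:sgs}) via the Bruhat decomposition $G=PU'\sqcup wP'$ and the projection onto the one--dimensional space of $MU'$--invariants in an explicit induced model. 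Your outline is plausible but would need more work to pin down the ``collinear'' inner sums; the paper's method has the advantage of producing the exact values (and hence the precise list of representations for which the singular Gauss sum is non--zero), which is later exploited in \S6.
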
 

\noindent
\textbf{Remarks}\\
\noindent
1.  Recall (see Remark \ref{charsumzero}) that if $\chi$ is
a non-trivial character of a finite group $G$ then
\[
\sum_{g\in G}\chi (g)=0.
\]
It follows, therefore, that if $\mathbf{I}=\prod_{1\leq i, j\leq
  2}I_{ij}$ is a matrix interval having component intervals $I_{ij}$ of the type
$I_{ij}=[0, r(p-1)],$
where $r\geq 1$ is a fixed integer, then 
\[
\sum_{A \in \mathbf{I}} \chi_{\rho} (A)=0.
\]
Thus, in the situation of Theorem \ref{main},
we may 
assume that $x <p$ and  apply Theorem \ref{main2} with $c=1$  
to obtain Theorem \ref{main}. 

\noindent
2. 
The trivial estimate for the sums
in Equations \eqref{mainpv} and \eqref{secondmain} is $d(\rho)p^4$, 
which shows that we obtain a `saving'
of $p^2$ compared to the trivial estimate. 

\noindent
3. The dimension $d(\rho)$ can be at most $p+1$ (see \S 2.2) and thus the character sums in the above two theorems are of
size $O(p^3(\log p)^4)$.

\subsection{Non-abelian Gauss sums}
The analogue of the Gauss sums for  $GL(n,\F_p)$ was introduced 
by Lamprecht (\cite{L}). Let $\rho$ be an irreducible, complex
representation of the group $GL(n,\F_p)$ and let $\chi_{\rho}$ be its
character. By $e(z)$ we shall denote $e^{2\pi i z}$ for a complex number $z$ and by $e_p(z)$ we shall denote
$e(z/p)$  throughout. Then, for integers $x$, the map $x \mapsto e_p(x)$ defines 
 an additive character (denoted again by $e_p$) 
on the finite field $\F_p$
identified with $\Z/p\Z$. The bilinear pairing $(A, X) \mapsto e_p\left(
  \tr(AX)\right)$ on  $M(n, \F_p)$, yields an identification of 
 $M(n, \F_p)$ with its dual group of characters. 
Following Lamprecht (\cite{L}), define the (matrix valued) Gauss sum 
attached to $\rho$ and $A$ as:
\begin{equation}\label{gausssum}
 G(\rho, A)=\sum_{X \in G} \rho(X)e_p( \tr(AX)).
\end{equation}
It is easy to verify that for $A \in GL(n, \F_p)$,
\begin{equation}\label{equivariance}
  G(\rho, A) = \rho(A)^{-1} G(\rho, I_d),
\end{equation}
 where $d =d(\rho).$ 
By  Schur's lemma, it follows that $G(\rho, I_d)$
is a scalar matrix, 
\begin{equation}\label{schur}
 G(\rho, I_d)=g(\rho)I_d,
\end{equation}
for some constant $g(\rho)$. 

The characters of the irreducible complex representations of $GL(n, \F_p)$ were
obtained explicitly by Green \cite{G} in terms of the `dual data'
consisting of the conjugacy classes of elements in  $GL(n, \F_p)$. 
Using Green's work, Kondo \cite{Ko} obtained the following estimate 
for  the size of the above  Gauss sums:
\begin{theorem}[Kondo] Let $\rho$ be an irreducible, complex 
  representation of $GL(n, \F_p)$. Then, 
\begin{equation}\label{kondo-exact}
 |g(\rho)|=p^{(n^2-k(\rho))/2},
\end{equation}
where $k(\rho)$ is the generalized  multiplicity of the eigenvalue $1$ in
the conjugacy class attached to $\rho$ by the Green correspondence. 
\end{theorem}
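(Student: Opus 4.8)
We may assume $\rho$ is unitary, so that $\rho(X)^{*}=\rho(X^{-1})$. Using \eqref{gausssum}, \eqref{schur} and $\overline{e_p(z)}=e_p(-z)$,
\[
|g(\rho)|^{2}I_{d}=G(\rho,I_{d})\,G(\rho,I_{d})^{*}=\sum_{X,Y\in GL(n,\F_p)}\rho(XY^{-1})\,e_p\!\left(\tr X-\tr Y\right).
\]
Setting $Z=XY^{-1}$ and performing the $Y$-sum first, the inner sum equals $S_{n}(Z-I)$, where $S_{n}(B):=\sum_{Y\in GL(n,\F_p)}e_p(\tr(BY))$; hence $|g(\rho)|^{2}=\tfrac{1}{d}\sum_{Z\in GL(n,\F_p)}\chi_{\rho}(Z)\,S_{n}(Z-I)$. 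The first step is to evaluate this ``Ramanujan sum over $GL_{n}$''. Since right translation by $GL(n,\F_p)$ is compatible with the pairing $(B,Y)\mapsto e_p(\tr BY)$, the value $S_{n}(B)$ depends only on $r=\operatorname{rank}(B)$; reducing $B$ to $\operatorname{diag}(I_{r},0)$ followed by a direct count (or a M\"obius inversion over the lattice of subspaces of $\F_{p}^{n}$) yields $S_{n}(B)=(-1)^{r}p^{\binom{r}{2}+r(n-r)}\,\big|GL(n-r,\F_p)\big|$. In particular $S_{n}(Z-I)$ is determined by $\dim\ker(Z-I)$, the number of Jordan blocks of $Z$ with eigenvalue $1$.

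The remaining and main task is to evaluate $\sum_{Z}\chi_{\rho}(Z)\,S_{n}(Z-I)$ using Green's explicit description \cite{G} of the irreducible characters of $GL(n,\F_p)$ in terms of the dual data. The structural input is that Green's characters are multiplicative along the primary decomposition of the argument: writing $Z=\prod_{\mu}Z_{\mu}$, where $\mu$ runs over the monic irreducible polynomials over $\F_{p}$ and $Z_{\mu}$ acts on the generalized $\mu$-eigenspace, one has $\chi_{\rho}(Z)=\prod_{\mu}\chi_{\rho}^{(\mu)}(Z_{\mu})$, with nonzero factors only for the $\mu$ occurring in the conjugacy class dual to $\rho$ and with block sizes $m_{\mu}$ fixed by $\rho$. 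Since the weight $S_{n}(Z-I)$ involves only the eigenvalue-$1$ block (the one attached to $\mu=t-1$), the whole sum factors over $\mu$. For each $\mu\neq t-1$ the corresponding local sum over the $\F_{p}$-rational unipotent classes of $GL(m_{\mu},\F_p)$, evaluated by the orthogonality relations for the unipotent Green polynomials (and, in the cuspidal case, the classical $O(\sqrt{p})$ bound for abelian Gauss sums), contributes its full expected power of $p$; the eigenvalue-$1$ factor, which additionally carries the weight $S_{n}(Z-I)$, turns out to be deficient by exactly a factor $p^{k(\rho)}$, and the product of all factors equals $p^{\,n^{2}-k(\rho)}$.

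The main obstacle is precisely this eigenvalue-$1$ computation: one must control Green's unipotent characters (Hall--Littlewood/Green polynomials) well enough to evaluate the weighted unipotent sum attached to the eigenvalue-$1$ factor, and to see that the exponent it produces is governed by the \emph{generalized} (algebraic) multiplicity $k(\rho)$ of the eigenvalue $1$ in the dual class, not by the geometric multiplicity $\dim\ker(Z-I)$ appearing naively through $S_{n}$; the two are related by transposition of the relevant partition under the Green correspondence, and tracking this is the crux. For $n=2$ one can bypass the general machinery: the irreducibles of $GL(2,\F_p)$ fall into four explicit families --- the one-dimensional characters, the twists of the Steinberg representation, the principal series, and the cuspidal representations --- and evaluating $\tfrac{1}{d}\sum_{X\in GL(2,\F_p)}\chi_{\rho}(X)e_p(\tr X)$ case by case from the character table gives $|g(\rho)|=p^{2},\,p^{3/2},\,p$ according as $k(\rho)=0,1,2$, which already covers what is needed here. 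Finally, applying Parseval's identity on the additive group $M(n,\F_p)$ to the matrix entries of $\rho$ yields the equivalent reformulation (with $\|\cdot\|_{\mathrm{HS}}$ the Hilbert--Schmidt norm)
\[
|g(\rho)|^{2}=p^{\,n^{2}}-\frac{1}{d\,\big|GL(n,\F_p)\big|}\sum_{A\in M(n,\F_p)\setminus GL(n,\F_p)}\big\|G(\rho,A)\big\|_{\mathrm{HS}}^{2},
\]
exhibiting the theorem as an exact evaluation of the total mass of the singular Gauss sums.
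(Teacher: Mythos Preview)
The paper does not prove Kondo's theorem; it is quoted from \cite{Ko}, and the remark that follows records the Braverman--Kazhdan reproof via Deligne--Lusztig theory and character sheaves, which expresses $g(\rho)$ as $p^{(n^{2}-n)/2}$ times an abelian Gauss sum on the relevant maximal torus. There is therefore no proof in the paper to compare your argument against directly.

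That said, your reduction is sound. The identity $|g(\rho)|^{2}=\tfrac{1}{d}\sum_{Z}\chi_{\rho}(Z)\,S_{n}(Z-I)$ and your formula for $S_{n}(B)$ in terms of $\operatorname{rank}(B)$ are correct (the rank-$1$ case of $S_{2}$ indeed recovers the paper's value $-p(p-1)$ for $G(1_{G},A)$ at a nonzero singular $A$), and the Parseval identity at the end is also correct and is a natural bridge between Kondo's theorem and the paper's singular Gauss sum computations. However, for general $n$ what you have written is a roadmap rather than a proof: you correctly isolate the eigenvalue-$1$ local factor as the crux, but the evaluation of the weighted unipotent sum via Green's characters (Hall--Littlewood/Green polynomials), together with the passage from the geometric multiplicity $\dim\ker(Z-I)$ appearing in $S_{n}$ to the generalized multiplicity $k(\rho)$ on the dual side, is asserted and not carried out. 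This is precisely the substantive content of Kondo's paper, and without executing it (or invoking the Braverman--Kazhdan abelianization) the general-$n$ statement remains unproved. Your $n=2$ remark, computing $g(\rho)$ case by case from the character table, is valid and in fact suffices for everything the present paper actually uses.
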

\begin{remark} More precisely, Kondo proves that up to a power of $p$, the non-abelian Gauss sum $g(\rho)$ is actually an `abelian' Gauss sum, attached to a
character of a maximal torus $T$ of $GL(n)$. Kondo's result was also proved by Braverman and Kazhdan (\cite[Theorem 1.3]{BK}), using the construction of irreducible representations of $GL(n,\F_p)$ by Deligne and Lusztig (\cite{DL}) and the theory of character sheaves due to Lusztig.  We recall this result now. 

Let $T$ be a maximal torus of $GL(n)$ over $\F_p$, and $\theta: T(\F_p)\to \bar{Q}_{\ell}^*$ be a character. Associated to this data, Deligne and Lusztig construct a virtual representation $R_T(\theta)$ of $GL(n,\F_p)$, and show that every irreducible representation $\rho$ of $GL(n, \F_p)$ is an irreducible constituent of some  $R_T(\theta)$. Consider the abelian Gauss sum
\[ g(\theta)= \sum_{X \in T(\F_p)} \theta(X)e_p( \tr(X)).\]
Braverman and Kazhdan have shown (\cite[Theorem 1.3]{BK}) that
\[ g(\rho)=p^{(n^2-n)/2}g(\theta).\]

\end{remark}
\begin{remark}
The foregoing result allows us to specify $k(\rho)$. With the notation of Section 2.2,
\[k(\rho)=\begin{cases} 2 \quad \mbox{if $\rho\simeq St$},\\
1 \quad \mbox{ if $\rho\simeq I_{\chi, 1}$ and $\chi$ is non-trivial},\\
0 \quad \mbox{otherwise}
\end{cases}
\]
\end{remark}

\subsection{Singular non-abelian Gauss sums}
Equations (\ref{equivariance}) and (\ref{schur}), gives an
estimate for the trace of $G(\rho, A)$, provided $A$ is a {\em non-singular}
matrix: 
\begin{equation}\label{kondo}
 |\tr(G(\rho, A))|\leq d(\rho)p^{n^2/2}, 
\end{equation}
where $d(\rho)$ is the dimension of $\rho$. 
If $\rho$ is not abelian,
$d(\rho)$ is either $p-1, p ~\mbox{or}
~p+1$. 
from the classification of
irreducible representations of $GL(2,\F_p)$ (see \S 2.2). 
Thus $d(\rho)\leq p+1$ for any representation $\rho$ and $d(\rho)$ is
 of order $p$ unless $\rho$ is abelian.

However, for the purpose of establishing an analogue of the
Polya-Vinogradov inequality, we need to estimate Gauss
sums attached to all (additive) characters  $M(n, \F_p)$; in paricular, we need
to estimate the {\em singular} Gauss sums, 
by which we mean the trace of $G(\rho,
A)$ where $A$ is a singular matrix in   $M(n, \F_p)$. 

It is easy to see that the trace of $G(\rho, A)$
depends only on the conjugacy class of $A$. Let 
\[ A_a =\begin{pmatrix} a & 0\\ 0 &
  0\end{pmatrix},  ~~a\neq 0 \quad \mbox{and}\quad  
N= \begin{pmatrix} 0 & 1\\ 0 &
  0\end{pmatrix}.\]  
A non-zero singular matrix in 
  $M(2,\F_p)$ is conjugate to either $A_a$ or $N$.  One of our main
results is Theorem \ref{theorem:sgs} 
below which gives the explicit
 values of the singular Gauss sums when
 $n=2$. We restrict to the case $n=2$ for simplicity 
and this case is already quite involved. 
We expect that a similar result for general $n$ should hold. 
\begin{theorem} \label{theorem:sgs}
 Let $\rho$ be a complex irreducible representation of $G=GL(2,\F_p)$, and let
 $A$ be a non-zero singular matrix in 
$M(2,\F_p)$.
Then the following statements hold:
\begin{enumerate}
\item \label{vanishingpart}
Suppose $\rho$ is not isomorphic to either the trivial representation $1_G$, or the
  Steinberg representation $St$ or the principal series representation $I_{\chi, 1}$ with 
$\chi$ a non-trivial character of
$\F_p^*$. Then,
\[\tr ( G(\rho, A))=0.\]

\item  For the trivial representation $1_G$, 
\[ G(1_{G},A)=-p(p-1). \]

\item If $\rho\simeq I_{\chi, 1}$ with $\chi$ a non-trivial character of
$\F_p^*$, then 

\begin{align*}
\tr( G(I_{\chi, 1},
  A_a))&=p(p-1)\overline{\chi(a)}G(\chi)\\
\tr( G(I_{\chi, 1},
  N))&=p(p-1)G(\chi),
\end{align*}
where $$G(\chi)=\sum_{a\in \F_p^*}\chi(a)e_p(ax)$$ is the usual classical
Gauss sum.

\item For the Steinberg representation $St$, 

\begin{align*}
\tr( G(St,A_a))&=-p(p-1).\\
\tr( G(St,N))&=p^2(p-1).
\end{align*}
\end{enumerate}
\end{theorem}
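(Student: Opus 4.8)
The plan is to evaluate the class sum
\[
\tr\bigl(G(\rho,A)\bigr)=\sum_{X\in GL(2,\F_p)}\chi_\rho(X)\,e_p\!\bigl(\tr(AX)\bigr)
\]
by replacing the (non-class) weight $X\mapsto e_p(\tr(AX))$ with its conjugation-average --- which \emph{is} a class function --- and then pairing against the character table of $GL(2,\F_p)$ recalled in \S 2.2. First I would observe that $\tr(G(\rho,A))$ depends only on the conjugacy class of $A$: substituting $X\mapsto gXg^{-1}$ and using cyclicity of the trace together with the class-function property of $\chi_\rho$ leaves the sum unchanged, so it suffices to treat $A=A_a$ ($a\in\F_p^*$) and $A=N$. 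Moreover, for any function $f$ on $G:=GL(2,\F_p)$ one has $\sum_X\chi_\rho(X)f(X)=\sum_X\chi_\rho(X)\widetilde f(X)$ with $\widetilde f(X):=|G|^{-1}\sum_{g\in G}f(gXg^{-1})$, since $\chi_\rho$ is a class function; applying this to $f(X)=e_p(\tr(AX))$ reduces everything to computing the class function $\Psi_A(X):=|G|^{-1}\sum_g e_p(\tr(AgXg^{-1}))$ and then forming $\sum_{C}|C|\,\chi_\rho(C)\,\Psi_A(C)$ over conjugacy classes $C$.

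The geometric heart of the argument is the explicit evaluation of $\Psi_{A_a}$ and $\Psi_N$. Writing $\tr(AgXg^{-1})=u^{\mathsf T}Xv$, where $u$ runs over the (transposed) rows of $g$ and $v$ over the columns of $g^{-1}$, one has the constraint $u^{\mathsf T}v=1$ when $A=A_a$ and $u^{\mathsf T}v=0$ when $A=N$ (with $u,v$ nonzero), and as $g$ ranges over $G$ the pair $(u,v)$ ranges over all such pairs with constant multiplicity. Summing over one of the two vectors first collapses a nontrivial additive character along a line, which forces $Xv\in\la v\ra$, i.e.\ forces $v$ to be an eigenvector of $X$. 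One then finds that $\Psi_{A_a}(X)$ is a fixed constant times $\sum_{v}e_p(a\lambda_v)$ (sum over nonzero eigenvectors $v$ of $X$, with eigenvalue $\lambda_v$), and that $\Psi_N(X)$ is an explicit affine function of the number of nonzero eigenvectors of $X$. In particular both are constant on each of the four families of classes (central, non-semisimple, split regular semisimple, elliptic), with $\Psi_{A_a}$ vanishing on the elliptic classes; the exact values are pinned down by the class sizes $1,\ p^2-1,\ p(p+1),\ p(p-1)$ and by $|G|=p(p-1)^2(p+1)$.

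Finally I would expand $\tr(G(\rho,A))=\sum_C|C|\,\chi_\rho(C)\,\Psi_A(C)$ and substitute the character table, running through the one-dimensional characters $\alpha\circ\det$, their products with the Steinberg representation, the irreducible principal series ($I_{\alpha,\beta}$ with $\alpha\neq\beta$), and the cuspidal representations in turn. Each resulting sum over the ``split'' classes is assembled from terms $\alpha(c)\,e_p(ac)$ with $\alpha$ a multiplicative character of $\F_p^*$; the orthogonality $\sum_{c\in\F_p^*}\alpha(c)=0$ for $\alpha\neq1$ (and, for the cuspidals, the analogous vanishing $\sum_{\zeta\in\F_{p^2}^*}\varphi(\zeta)=0$ over the elliptic classes) makes the whole sum telescope to $0$ unless a trivial multiplicative character is forced to appear --- which happens precisely for $\rho\simeq1_G$, $\rho\simeq St$, and $\rho\simeq I_{\chi,1}$. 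In the first two cases the surviving terms involve only $\sum_{c\in\F_p^*}e_p(ac)=-1$ and combinatorial factors, yielding $-p(p-1)$ and $p^2(p-1)$; for $I_{\chi,1}$ a twisted complete sum $\sum_{c\in\F_p^*}\chi(c)e_p(ac)$ survives, i.e.\ a scalar multiple of the classical Gauss sum $G(\chi)$.

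I expect the main obstacle to lie in these two computations: getting the orbit count for $\Psi_A$ exactly right --- in particular the degenerate count that occurs precisely because $A$ is \emph{singular}, which is what makes the singular Gauss sums differ from the non-singular ones governed by Kondo's $|g(\rho)|=p^{(4-k(\rho))/2}$ --- and then the bookkeeping with conjugacy-class sizes, ordered versus unordered pairs of eigenvalues, and the elliptic terms. As consistency checks I would verify that the same method applied to $A=I_2$ reproduces Kondo's value, and that $\sum_{A\text{ of rank }1}\tr(G(\rho,A))=0$ for non-trivial $\rho$ --- a consequence of $\sum_{X\in M(2,\F_p)}\chi_\rho(X)=0$ together with the already-known non-singular values --- since this last relation fixes the relative normalization of the $A_a$- and $N$-contributions.
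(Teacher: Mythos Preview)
Your approach is correct and genuinely different from the paper's. The paper does \emph{not} average the additive character over conjugation; instead it uses the Bruhat-type decomposition $GL(2,\F_p)=PU'\sqcup wP'$ to split $G(\rho,A)=G_1(\rho,A)+G_2(\rho,A)$, observes that each piece factors through the projection $Pr_{MU'}$ onto the space $V_\rho^{MU'}$, and then classifies the representations with $V_\rho^{MU'}\neq 0$ via Frobenius reciprocity (showing $I_{MU'}^G(1)\cong 1_G\oplus St\oplus\bigoplus_{\chi\neq 1}I_{\chi,1}$). The non-vanishing cases are finished by writing down the one-dimensional $MU'$-invariant line explicitly and tracking how the operators $\rho(x_u x_l)$ and $\rho(w)$ act on it.

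Your route --- computing the class function $\Psi_A(X)=|G|^{-1}\sum_g e_p(\tr(AgXg^{-1}))$ via the eigenvector count and then pairing against the character table --- is more elementary and entirely self-contained once the table is known; the vanishing in part~(1) falls out of the multiplicative orthogonality $\sum_{c\in\F_p^*}\alpha(c)=0$ (and its $\F_{p^2}^*$ analogue for the cuspidals) rather than from any structural statement about fixed vectors. The paper's method, by contrast, isolates a conceptual reason for the trichotomy (namely $V_\rho^{MU'}\neq 0$) before any character values are written down, and computes the surviving traces on a single invariant vector rather than summing over all conjugacy classes. This is likely the cleaner route for $GL(n)$, where the character table is considerably more involved; your method, on the other hand, makes the appearance of the classical Gauss sum $G(\chi)$ completely transparent, since it arises directly as $\sum_{c\in\F_p^*}\chi(c)e_p(ac)$ from the split-semisimple contribution.
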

\begin{remark}\label{charsumzero}  It follows from the 
orthogonality of characters, that
$\tr( G(\rho, 0))$ vanishes when $\rho$ is a non-trivial
irreducible representation of   $GL(n, \F_p)$, and
equal to $|GL(n, \F_p)|$ if $\rho=1_G$, the trivial representation.
\end{remark}

As a consequence of the above result and Kondo's estimate for
non-singular Gauss sums given by Eq. (\ref{kondo}),  
the following general theorem is immediate after one applies 
the Gauss estimate for the classical Gauss sum:
$|G(\chi)|=\sqrt{p}$ and recalls the fact that the dimensions 
of $I_{\chi, 1}$ and $St$
are, respectively, $p+1$ and $p$ (see \S 2). 
\begin{theorem}\label{gaussest-general}
Let $p>2$ be a prime and let $\rho$ be a complex irreducible
representation of $GL(2,\F_p)$ and let $A$ be a non-zero matrix in 
$M(2,\F_p)$. Then, 
\begin{equation}\label{eqn:generalest}
 |\tr(G(\rho, A))|\leq d(\rho)p^{2}. 
\end{equation}

\end{theorem}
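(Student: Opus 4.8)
The plan is to deduce the uniform bound \eqref{eqn:generalest} by separating the non-singular and singular cases and, in each, invoking results already in hand. First I would treat a non-singular matrix $A\in GL(2,\F_p)$. Combining the equivariance relation \eqref{equivariance} with Schur's lemma \eqref{schur} gives $G(\rho,A)=g(\rho)\rho(A)^{-1}$, whence $|\tr(G(\rho,A))|=|g(\rho)|\,|\tr(\rho(A)^{-1})|\leq |g(\rho)|\,d(\rho)$: indeed $A$ has finite order in $GL(2,\F_p)$, so the eigenvalues of $\rho(A)^{-1}$ are roots of unity and its trace is a sum of $d(\rho)$ complex numbers of modulus $1$. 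Kondo's formula \eqref{kondo-exact} with $n=2$ then yields $|g(\rho)|=p^{(4-k(\rho))/2}\leq p^{2}$ since $k(\rho)\geq 0$. This is precisely \eqref{kondo} and already establishes the claim for invertible $A$.

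Next I would handle the non-zero singular matrices. Since $\tr(G(\rho,A))$ depends only on the conjugacy class of $A$, and every non-zero singular matrix in $M(2,\F_p)$ is conjugate to $A_a$ (some $a\neq 0$) or to $N$, it suffices to bound $\tr(G(\rho,A_a))$ and $\tr(G(\rho,N))$ for every irreducible $\rho$, which I would do by running through the four cases of Theorem \ref{theorem:sgs}. If $\rho$ is none of $1_G$, $St$, or $I_{\chi,1}$ with $\chi$ non-trivial, then $\tr(G(\rho,A))=0$ and there is nothing to prove. For $\rho=1_G$ one has $|G(1_G,A)|=p(p-1)<p^{2}=d(1_G)p^{2}$. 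For $\rho=St$ the two values have absolute value $p(p-1)$ and $p^{2}(p-1)$, both at most $p^{3}=d(St)p^{2}$. For $\rho=I_{\chi,1}$ with $\chi$ non-trivial, the two traces have absolute value $p(p-1)|G(\chi)|$, and the classical Gauss bound $|G(\chi)|=\sqrt p$ gives $p(p-1)\sqrt p\leq (p+1)p^{2}=d(I_{\chi,1})p^{2}$. Combining these cases with the non-singular estimate completes the proof.

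As for difficulty, there is essentially no obstacle at this stage: the theorem is a bookkeeping consequence of Theorem \ref{theorem:sgs}, Kondo's estimate, and $|G(\chi)|=\sqrt p$. The only points needing a moment's care are that one must recall the dimensions $d(St)=p$ and $d(I_{\chi,1})=p+1$ (see \S 2) so that the target $d(\rho)p^{2}$ is read off correctly, and that the numerical inequalities $p^{2}(p-1)\leq p^{3}$ and $p(p-1)\sqrt p\leq (p+1)p^{2}$ are trivially valid for all $p>2$. The genuine work -- the evaluation of the singular Gauss sums in Theorem \ref{theorem:sgs} -- is assumed here.
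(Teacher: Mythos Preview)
Your proposal is correct and follows exactly the paper's approach: the paper states that the theorem is immediate from Theorem~\ref{theorem:sgs}, Kondo's estimate \eqref{kondo} for non-singular $A$, the classical bound $|G(\chi)|=\sqrt{p}$, and the dimensions $d(St)=p$, $d(I_{\chi,1})=p+1$. Your case-by-case verification is precisely the intended argument, with no missing steps.
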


\subsection{Applications of the $GL(2)$ Polya-Vinogradov inequality} 
We first describe the general plan for applications here. Let $\phi$ be a  $GL(2,\F_p)$ conjugacy-invariant function on
$M(2,\F_p)$. Consider
the sum, 
\begin{equation}\label{sumdef}
S(\phi, x)=\sum_{ h(A)\leq x}\phi(\overline{A}),
\end{equation}
where $\overline{A}$ denotes $A \modp$.
Decomposing $\phi$ as a Fourier series in terms of the
irreducible characters of $G$, we write
\[\phi=\sum_{\rho\in \hat{G}} c_{\phi}(\rho)\chi_{\rho},\]
where $\hat{G}$ is the collection of  complex irreducible representations of
$G$ up to isomorphism and $$ c_{\phi}(\rho)=\frac{1}{|G|}\sum_{g\in
  G}\phi(g)\overline{\chi_{\rho}(g)}$$ is the Fourier coefficient of
$\phi$ with respect to the character $\chi_{\rho}$.  From Theorem
\ref{main}, upon singling out the contribution from the trivial
representation of $G$ as the `main term', we obtain the estimate
\begin{equation}
 \sum_{ h(A)\leq x}\phi(A) =c_{\phi}(1_G) \sum_{h(A)\leq x}
 \chi_1(A)
+O\left( d(\rho)p^{2}(\log
  p)^{4}\left|\sum_{\rho\in \hat{G}}c_{\phi}(\rho)\right|\right), 
\end{equation}
where for  simplicity of notation, we write
$\chi_1$ to denote the trivial character of $G$. By Lemma \ref{countingns}, the contribution of  the trivial
character is,  
\begin{equation}
\sum_{h(A)\leq x} \chi_1(A)=16\gamma_p x^{4}+O(x^{3}),
\end{equation}
where $\gamma_p=1-\frac1p-\frac1{p^2}+\frac1{p^3}.$

Thus we obtain the general formula 
\begin{equation}\label{generalfunctionest}
 \sum_{ h(A)\leq x}\phi(A) =16 c_{\phi}(1_G)\gamma_p x^{4}+O(c_{\phi}(1_G)x^{3})
+O\left( d(\rho)p^{2}(\log
  p)^{4}\left|\sum_{\rho\in \hat{G}}c_{\phi}(\rho)\right|\right).
\end{equation}

\subsubsection{Counting elements 
in a conjugacy class} We now consider the case where $\phi$ is the
characteristic function of a conjugacy class $C$ in $G=GL(2,
\mathbbm{F}_p)$.   
We want to count the number of matrices $A$ in
$M(2, \mathbbm{Z})$ 
 with height bounded by $x$ that reduces modulo $p$ to an element 
lying in $C$.  

Let $\delta_C$ we denote the 
indicator function of the subset $C$ of $G$. By orthogonality of characters, 
\[ \delta_C = \frac{|C|}{|G|}\sum_{\rho\in \hat{G}}
\overline{\chi_{\rho}(c)}\chi_{\rho},
\]
for any  $c\in C$.  
Therefore, proceeding as before, we have,
\begin{align*}
S(\delta_C, x)= \sum_{ h(A)\leq x} \delta_C (A)& =  \frac{|C|}{|G|} 
\sum_{\rho\in \hat{G}}\overline{\chi_{\rho}(c)}\sum_{h(A)\leq x} \chi_{\rho}(A)\\
 &=\frac{|C|}{|G|}\sum_{h(A)\leq x} \chi_1(A)+\frac{|C|}{|G|} 
\sum_{\rho\neq 1_G}\overline{\chi_{\rho}(c)}
 \sum_{h(A)\leq x} \chi_{\rho}(A);
\end{align*}

and we obtain the following general statement:
\begin{proposition}\label{thm:conjugacyclass}
Suppose $C$ is a conjugacy class in $G=GL(2, \mathbbm{F}_p)$ and 
$c \in C$ is any element,  we have the equality
 \begin{equation*}
 S(\delta_C,x) =
  \frac{16|C|\gamma_p}{|G|}x^{4}+O\left(\frac{|C|}{|G|}\left(x^3+p^{2}(\log
      p)^{4}
\sum_{\rho \neq 1_G}d(\rho) |\overline{\chi_{\rho}(c)}|\right)
\right).
 \end{equation*}

\end{proposition}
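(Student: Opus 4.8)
The plan is to combine column orthogonality of characters with the two substantive inputs already available: the $GL(2)$ Polya--Vinogradov bound (Theorem \ref{main}) and the counting asymptotics for integer matrices with invertible reduction (Lemma \ref{countingns}). The proposition itself is then a formal assembly.

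First I would record the Fourier expansion of $\delta_C$. By the second (column) orthogonality relation for the character table of $G=GL(2,\F_p)$, for any fixed $c\in C$ one has $\frac{|C|}{|G|}\sum_{\rho\in\hat G}\overline{\chi_\rho(c)}\chi_\rho(g)=1$ when $g$ is conjugate to $c$ and $0$ otherwise. Since $C\subseteq G$ consists of invertible matrices and each $\chi_\rho$ was extended by zero to singular matrices, while $\delta_C$ also vanishes on singular matrices, this identity holds as an identity of functions on all of $M(2,\F_p)$, i.e. $\delta_C=\frac{|C|}{|G|}\sum_{\rho\in\hat G}\overline{\chi_\rho(c)}\chi_\rho$. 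Substituting this into $S(\delta_C,x)=\sum_{h(A)\leq x}\delta_C(\overline A)$ and interchanging the two finite sums gives
\[ S(\delta_C,x)=\frac{|C|}{|G|}\sum_{\rho\in\hat G}\overline{\chi_\rho(c)}\sum_{h(A)\leq x}\chi_\rho(A). \]

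Next I would isolate the contribution of the trivial representation $1_G$, for which $\overline{\chi_{1_G}(c)}=1$ and $\chi_1(A)$ is the indicator that $\det\overline A\neq 0$; by Lemma \ref{countingns} this contributes $\frac{|C|}{|G|}\bigl(16\gamma_p x^4+O(x^3)\bigr)$, which is the main term. For every $\rho\neq 1_G$, Theorem \ref{main} gives $\bigl|\sum_{h(A)\leq x}\chi_\rho(A)\bigr|\ll d(\rho)p^2(\log p)^4$, so the remaining terms are bounded in absolute value by
\[ \frac{|C|}{|G|}\sum_{\rho\neq 1_G}\bigl|\chi_\rho(c)\bigr|\cdot O\!\left(d(\rho)p^2(\log p)^4\right)=O\!\left(\frac{|C|}{|G|}\,p^2(\log p)^4\sum_{\rho\neq 1_G}d(\rho)\bigl|\chi_\rho(c)\bigr|\right), \]
using $|\overline{\chi_\rho(c)}|=|\chi_\rho(c)|$. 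Collecting the main term together with the $O(x^3)$ error and this last error yields exactly the claimed formula.

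I do not expect a genuine obstacle: the only points that require care are verifying the normalization factor $|C|/|G|$ in the Fourier expansion of $\delta_C$ (this is where column, rather than row, orthogonality is used) and checking that this identity is unaffected by extending the $\chi_\rho$ by zero to singular matrices, which holds since $\delta_C$ vanishes there as well. If one wishes, the residual sum $\sum_{\rho\neq 1_G}d(\rho)|\chi_\rho(c)|$ can be estimated further by Cauchy--Schwarz, using $\sum_\rho d(\rho)^2=|G|$ and $\sum_\rho|\chi_\rho(c)|^2=|G|/|C|$, but the statement as given deliberately leaves it unsimplified so that it can be specialized to various conjugacy classes later.
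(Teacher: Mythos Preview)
Your proposal is correct and follows essentially the same route as the paper: expand $\delta_C$ via column orthogonality, separate the trivial representation and apply Lemma~\ref{countingns} for the main term, and bound the remaining $\rho\neq 1_G$ contributions by Theorem~\ref{main}. Your additional remarks about the extension by zero to singular matrices and the optional Cauchy--Schwarz step are sound but not needed for the statement as given.
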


\begin{remark}
The above result is of limited use as the inner sum $\sum_{\rho \neq 1_G}d(\rho) |\overline{\chi_{\rho}(c)}|$ can be quite large
in general. However, for certain conjugacy classes  this simple approach  already gives a non-trivial result. See the next subsection for an example. 
\end{remark}

\subsubsection{Elliptic elements} An element in $GL(2, \F_p)$ is said
to be \textit{elliptic} if its characteristic polynomial is irreducible over
$\F_p$. We shall call an integer matrix elliptic if its reduction modulo $p$ is elliptic. The problem of finding 
 an elliptic element of the least height  can be considered in  analogy with the classical problem of
finding the least quadratic non-residue for a prime $p$ (see
\cite{Mont}). It follows from \eqref{pv} that for any
$\ep>0$,  there is a positive integer $\tau=O(p^{\frac{1}{2}+\ep})$
that is a quadratic non-residue for the prime $p$ and the matrix
$\bigl(\begin{smallmatrix} 0 &\tau \\ 1 & 0
\end{smallmatrix} \bigr)$ is an elliptic element of height 
$O(p^{\frac{1}{2}+\ep})$. Henceforth, we shall follow the standard custom of using the symbol $\ep$ to denote a positive
real number which will be assumed to be as small as we please and the value of $\ep$ may differ
from one occurrence to the other. 

Now, suppose we want to count the elliptic elements 
of height up to $x$. Let $\Omega_e$ denote the set of elliptic elements in $GL(2, \F_p)$.
Therefore, we need to estimate the size  of $S(\delta_{\Omega_e},x)$. Following the proof of  Prop. \ref{thm:conjugacyclass}, we can easily obtain a result of
the form 
$$S(\delta_{\Omega_e},x)=8\left(1-\frac{2}{p}+\frac{1}{ p^2}\right) x^4+O(x^3+ p^{3+\ep}),$$
which shows that asymptotically half of all matrices reduce to
elliptic elements modulo $p$ as soon as $x\gg p^{3/4+\ep}$. However,
by a direct and simple argument using the classical Polya-Vinogradov bound for characters of $\F_p^*$, we establish the following easy result which shows that it is enough to take $x\gg
p^{1/2+\ep}$:
\begin{proposition}\label{thm:growth-elliptic}  With notation as above, 
\[ S(\delta_{\Omega_e},x)=8\left(1-\frac{2}{p}+\frac{1}{ p^2}\right) x^4 + O(x^3 \sqrt{p}\log p).\]
\end{proposition}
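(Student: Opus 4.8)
Here is the plan. Since $p$ is odd, a matrix $A=\bigl(\begin{smallmatrix} a & b\\ c& d\end{smallmatrix}\bigr)\in M(2,\Z)$ reduces modulo $p$ to an elliptic element exactly when its characteristic polynomial $T^{2}-(a+d)T+(ad-bc)$ is irreducible over $\F_p$, i.e.\ exactly when the discriminant $D(A):=(a-d)^{2}+4bc$ is a non-zero non-square modulo $p$. Hence, writing $\left(\tfrac{\cdot}{p}\right)$ for the Legendre symbol, one has the pointwise identity
\[
\delta_{\Omega_e}(\bar A)=\frac12-\frac12\left(\frac{D(A)}{p}\right)-\frac12\,\mathbf 1\bigl[p\mid D(A)\bigr].
\]
Summing over all $A$ with entries $a,b,c,d$ in $\mathcal B_x:=\{n\in\Z:\ |n|\le x\}$ and setting $N:=|\mathcal B_x|=2\lfloor x\rfloor+1=2x+O(1)$, this gives $S(\delta_{\Omega_e},x)=\tfrac{N^{4}}2-\tfrac12\Sigma_1-\tfrac12\Sigma_2$, where $\Sigma_1:=\sum_{A}\left(\tfrac{D(A)}{p}\right)$ and $\Sigma_2:=\#\{A:\ p\mid D(A)\}$, the sums being over $a,b,c,d\in\mathcal B_x$; it then remains to extract the main terms of $\Sigma_1$ and $\Sigma_2$.

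For $\Sigma_1$ I would fix $a,b,d$ and execute the sum over $c$. If $p\nmid b$, then modulo $p$ one has $(a-d)^{2}+4bc\equiv 4b\bigl(c+(4b)^{-1}(a-d)^{2}\bigr)$, so
\[
\sum_{c\in\mathcal B_x}\left(\frac{(a-d)^{2}+4bc}{p}\right)=\left(\frac{4b}{p}\right)\sum_{c\in\mathcal B_x}\left(\frac{c+(4b)^{-1}(a-d)^{2}}{p}\right),
\]
and the inner sum is a sum of the non-principal character $\left(\tfrac{\cdot}{p}\right)$ over $N$ consecutive integers, hence is $O(\sqrt p\log p)$ by the classical Polya--Vinogradov inequality \eqref{pv}, uniformly in $N$. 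If $p\mid b$, the symbol equals $\left(\tfrac{(a-d)^{2}}{p}\right)=\mathbf 1[p\nmid a-d]$, independent of $c$. Performing the remaining sums, the range $p\nmid b$ contributes $O(N^{3}\sqrt p\log p)$, while the range $p\mid b$ contributes $N_0\,N\,(N^{2}-M)$, where $N_0:=\#\{b\in\mathcal B_x:\ p\mid b\}=\tfrac Np+O(1)$ and $M:=\#\{(a,d)\in\mathcal B_x^{2}:\ a\equiv d\ (\mathrm{mod}\ p)\}=\tfrac{N^{2}}p+O(p)$; multiplying out gives $\Sigma_1=\tfrac{N^{4}}{p}\bigl(1-\tfrac1p\bigr)+O(N^{3}\sqrt p\log p)$. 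The count $\Sigma_2$ is purely elementary: for $p\nmid b$ the congruence $4bc\equiv-(a-d)^{2}$ confines $c$ to one residue class ($\tfrac Np+O(1)$ choices), while for $p\mid b$ one needs $p\mid a-d$; this yields $\Sigma_2=\tfrac{N^{4}}p+O(N^{3}+Np)$. In both cases the error is $\ll N^{3}\sqrt p\log p$: when $N\le\sqrt p$ everything in sight is $\ll N^{4}\ll N^{3}\sqrt p$, and otherwise $Np\ll N^{3}$, so the displayed expansions already have this shape.

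Inserting these into $S(\delta_{\Omega_e},x)=\tfrac{N^{4}}2-\tfrac12\Sigma_1-\tfrac12\Sigma_2$, the coefficient collapses to $\tfrac12\bigl(1-\tfrac{1-1/p}{p}-\tfrac1p\bigr)=\tfrac12\bigl(1-\tfrac1p\bigr)^{2}$, and using $N^{4}=16x^{4}+O(x^{3})$ we get $S(\delta_{\Omega_e},x)=8\bigl(1-\tfrac1p\bigr)^{2}x^{4}+O(x^{3}\sqrt p\log p)=8\bigl(1-\tfrac2p+\tfrac1{p^{2}}\bigr)x^{4}+O(x^{3}\sqrt p\log p)$, which is the assertion. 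There is no serious obstacle here — which is why the statement is ``easy''; the only point worth isolating is that, with $b$ and $a-d$ held fixed, the variable $c$ enters \emph{linearly} inside the Legendre symbol, so that a single application of the interval bound \eqref{pv} for characters of $\F_p^{*}$ suffices, and the remaining work is the bookkeeping needed to see that the contributions of the locus $p\mid b$ and of the zero-discriminant matrices conspire to produce exactly the factor $\bigl(1-\tfrac1p\bigr)^{2}$.
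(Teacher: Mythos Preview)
The proposal is correct and follows essentially the same approach as the paper: both rewrite $\delta_{\Omega_e}(\bar A)$ via $\tfrac12\bigl(1-\chi(D(A))\bigr)-\tfrac12\mathbf 1[p\mid D(A)]$ (the paper packages this as $S(\delta_{\Omega_e},x)=S-\tfrac12 S(\delta_\Delta,x)$), bound the Legendre-symbol sum by fixing $a,d,b$ and applying the classical Polya--Vinogradov inequality to the resulting sum over the linearly-entering variable $c$, and count the zero-discriminant matrices by an elementary argument. Your bookkeeping of the secondary terms (the $p\mid b$ contribution and the case split $N\lessgtr\sqrt p$) is slightly tidier than the paper's but the substance is identical.
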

This theorem is used in the problem of estimating the growth of  the number of
primitive elements of height up to $x$ described in the next section.

\begin{remark}
If we use the Burgess bound (see \cite{Bu, Bur2, Bur3}) instead of the Polya-Vonogradov bound, then it is possible to obtain a superior result
but that does not lead to any improvement in the final application towards counting primitive elements. 
\end{remark}

\subsection{Application to counting Primitive elements}
Given a prime  $p$, assumed to be large, a classical problem is to estimate the
 size of  the smallest positive primitive root
$g_p$ (i.e., 
a generator for the cyclic group $\F_p^*$).
This can be reduced to a question of estimation of character sums and
by the celebrated bound of Burgess \cite{Bu} on character sums, 
one can show that (see \cite{Mont})
$$
g_p \ll_{\ve}p^{\frac{1}{4\sqrt{e}}+\ep}.
$$
For $G=GL(2,\F_p)$, we consider the generators of the subgroup
$\F_{p^2}^*$ as   analogue of the primitive roots for $\F_p^*$. 
Such elements are the elliptic semisimple elements (see \S 5) of order 
$p^2 -1$, which is  the maximum possible order in $G$. We shall refer to them as 
{\em primitive elements}.

Let $\Omega_{prim}$ denote the 
set of primitive elements in $G$. One has (see \S 2.1)
\begin{equation}\label{primitive}
\frac{|\Omega_{prim}|}{|G|}=\frac{\phi(p^2-1)}{2(p^2-1)}.
\end{equation}
By the observation that $4$ divides $p^2-1$ and by the lower bound
 $\frac{\phi(n)}{n} \gg (\log \log n)^{-1}$
(see \cite[Thm 15]{RS}), we have the following bounds for the above ratio:
\[
(\log \log p)^{-1}\ll \frac{|\Omega_{prim}|}{|G|}\leq \frac{1}{4}. 
\]
Here $\phi$ denotes the Euler $\phi$-function.
We have used the representation theory of $GL(2, \F_p)$, Theorem \ref{main2}, and the classical Polya-Vinogradov estimate to prove
the following theorem which gives an asymptotic formula  for the number of elements  
in the set $\{A \in M(2, \Z): h(A) \leq
x\}$ that reduce  to  primitive  elements modulo $p$. 

\begin{theorem}\label{lqprthm} For any $\ep>0$, we have
 \begin{equation}\label{strong}
S(\delta_{\Omega_{prim}}, x)= \frac{8\phi(p^2-1)}{(p^2-1)}(1-2/p+1/p^2)x^4+O(x^3 \sqrt{p}\log p)+O(x^2p\log p)+O(p^{2+\ep})
\end{equation}
\end{theorem}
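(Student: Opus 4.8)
The plan is to count primitive elements by decomposing the indicator function $\delta_{\Omega_{prim}}$ over the irreducible characters of $G=GL(2,\F_p)$, exactly as in the general formula~\eqref{generalfunctionest}, but to organize the character-by-character estimates more carefully than the crude Proposition~\ref{thm:conjugacyclass} allows. First I would note that $\Omega_{prim}$ is a union of conjugacy classes, each of which is the class of an elliptic semisimple element of order $p^2-1$; such a class is parametrized by a pair $\{\alpha,\alpha^p\}$ with $\alpha$ a generator of $\F_{p^2}^*$. Writing $\delta_{\Omega_{prim}} = \sum_{\rho} c(\rho)\chi_\rho$ with $c(\rho) = \frac{1}{|G|}\sum_{g\in\Omega_{prim}}\overline{\chi_\rho(g)}$, the contribution of the trivial representation gives $c(1_G) = |\Omega_{prim}|/|G| = \phi(p^2-1)/(2(p^2-1))$, and by Lemma~\ref{countingns} (the counting of nonsingular matrices of bounded height) this produces the main term $\frac{8\phi(p^2-1)}{p^2-1}(1-2/p+1/p^2)x^4 + O(x^3)$.

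The substance is the error term. I would split $\hat G \setminus \{1_G\}$ into the relevant families from~\S2.2: the one-dimensional characters $\chi\circ\deter$, the principal series $I_{\chi_1,\chi_2}$ (dimension $p+1$), the Steinberg twists $\chi\cdot St$ (dimension $p$), and the cuspidal (discrete series) representations $\pi_\theta$ attached to characters $\theta$ of $\F_{p^2}^*$ (dimension $p-1$). For each family I would combine Theorem~\ref{main2} (the $GL(2)$ Polya--Vinogradov bound $\ll d(\rho)p^2(\log p)^4$) with the value of $\chi_\rho$ on an elliptic semisimple class: on such a class the cuspidal character $\pi_\theta$ takes the value $-(\theta(\alpha)+\theta(\alpha^p))$, the principal series $I_{\chi_1,\chi_2}$ takes value $0$, the Steinberg twist $\chi\cdot St$ takes value $-\chi(\deter)$, and a one-dimensional $\chi\circ\deter$ takes value $\chi(\deter)$. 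Thus the error is controlled by sums of the shape $\sum_{\theta}\left(\sum_{g\in\Omega_{prim}}(\theta(\alpha_g)+\theta(\alpha_g^p))\right)$ weighted by $p^2(\log p)^4 \cdot d(\pi_\theta)$, and similarly for the other families; crucially the inner character sum over $\theta$ of a fixed size contributes only to those $g$ with $\alpha_g$ in a single class, so there is no blow-up from $\sum_\rho d(\rho)|\chi_\rho(c)|$. Handling the one-dimensional and Steinberg pieces is where the $x^2 p\log p$ and smaller terms arise: for these the relevant sum is $\sum_{\chi}\left(\sum_{g}\chi(\deter g)\right) \cdot (\text{PV bound})$, but one can do better by going back to the definition $S(\delta,x)=\sum_{h(A)\le x}\delta(\bar A)$ and summing first over $A$ with fixed determinant, reducing to a classical Polya--Vinogradov sum over $\F_p^*$ of length $\ll p$ — this is the route that yields a $\sqrt p\log p$ saving rather than the lossy $p^2(\log p)^4$, accounting for the $O(x^3\sqrt p\log p)$ term, exactly as in Proposition~\ref{thm:growth-elliptic}. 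The $O(p^{2+\ep})$ term absorbs the genuinely non-abelian cuspidal contribution, where Theorem~\ref{main2} is applied honestly and one uses $\sum_\theta 1 \ll p^2$ together with $d(\pi_\theta)=p-1$ and the trivial bound $|\theta(\alpha)+\theta(\alpha^p)|\le 2$, but with the gain that the cuspidal characters vanish off elliptic classes so only $O(p)$ of the $\theta$'s contribute nontrivially after orthogonality.

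The main obstacle I anticipate is bookkeeping the interplay between two different estimation strategies across the character families: for the abelian and Steinberg parts one must \emph{not} apply Theorem~\ref{main2} blindly (that would give only $p^3(\log p)^4$, too weak for the stated $x^2 p\log p$ term) but instead peel off the determinant and invoke the classical Polya--Vinogradov inequality on $\F_p^*$, whereas for the cuspidal part Theorem~\ref{main2} is essential and the saving comes from the vanishing of cuspidal characters on non-elliptic classes. Matching the three error terms $x^3\sqrt p\log p$, $x^2 p\log p$, $p^{2+\ep}$ to these three mechanisms — and checking that the secondary main-term error $O(x^3)$ from Lemma~\ref{countingns} is dominated — is the delicate part; the $p^\ep$ there comes from the divisor-type bound $\sum_{\theta}1$ over characters of $\F_{p^2}^*$ weighted against $\phi(p^2-1)$, or alternatively from a $(\log\log p)$ factor which one absorbs into $p^\ep$.
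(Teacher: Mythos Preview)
Your framework is right---Fourier-expand $\delta_{\Omega_{prim}}$ over $\hat G$ and treat the families separately---but the mechanism you describe for reaching $O(p^{2+\ep})$ on the cuspidal (and twisted Steinberg) contribution does not work, and several of the error terms are misattributed.

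The central gap is the cuspidal piece. Applying Theorem~\ref{main2} ``honestly'' to a single cuspidal $X_\phi$ already gives $d(X_\phi)p^2(\log p)^4 \asymp p^3(\log p)^4$. Since there are many $\phi$ with $c_{X_\phi}\neq 0$ (indeed $\sum_\phi |c_{X_\phi}|$ is only bounded by $\tau(p^2-1)$, not by something that shrinks), your route only recovers the weaker $O(p^{3+\ep})$ of Proposition~\ref{weaklqpr}. The assertion that ``only $O(p)$ of the $\theta$'s contribute nontrivially after orthogonality'' is not correct and in any case would not help: the factor $d(\rho)\sim p$ is the issue, not the number of representations. What the paper actually does is go \emph{inside} the Fourier-analytic proof and use the Plancherel identity $\sum_{h(A)\le x}\chi_\rho(A)=p^4\sum_B \widehat{\chi_\rho}(B)\overline{\widehat{\delta_{\bar{\mathbf I}}}(B)}$, then split over $B$: for nonzero singular $B$ the singular Gauss sum vanishes for these $\rho$ (Theorem~\ref{theorem:sgs}(1)); for non-singular non-central $B$ one has $\widehat{\chi_\rho}(B)=p^{-4}g(\rho)\chi_\rho(B^{-1})$ and the key input $|\chi_\rho(B^{-1})|\le 2$ for non-central elements (Proposition~\ref{curi}), giving $O(p^2(\log p)^4)$; the non-singular \emph{scalar} $B$ are handled by a direct geometric-series estimate and produce precisely the $O(x^2 p\log p)$ term. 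You are missing both the vanishing of the singular Gauss sums and Proposition~\ref{curi}; without them the $d(\rho)$ factor cannot be removed.

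Secondly, the main term and the $O(x^3\sqrt p\log p)$ term do not arise as you describe. The trivial representation alone gives coefficient $16c_1\gamma_p$ with $\gamma_p=(1-1/p)(1-1/p^2)$, not $(1-1/p)^2$; and the untwisted Steinberg is \emph{not} dispatched to the error via ``peeling off the determinant''. The paper combines the two: since $c_{St}=-c_1$ and $\chi_{St}=-\chi_1$ on elliptic semisimple classes (and $\chi_{St}=\chi_1$ on split semisimple ones), one finds $c_1\sum\chi_1(A)+c_{St}\sum\chi_{St}(A)=2c_1\,S(\delta_{\Omega_e},x)+(\text{small})$, and then Proposition~\ref{thm:growth-elliptic} supplies both the correct main term $\frac{8\phi(p^2-1)}{p^2-1}(1-2/p+1/p^2)x^4$ and the error $O(x^3\sqrt p\log p)$. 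Finally, for the one-dimensional $U_\eta$, the paper \emph{does} apply Theorem~\ref{main} directly (since $d(U_\eta)=1$ already yields $O(p^2(\log p)^4)$); no determinant trick is needed there.
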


The following is immediate:
\begin{corollary}\label{lqpr}
Given a sufficiently large but fixed prime $p$ and any $x\gg p^{1/2+\ep}$, 
 a positive proportion of the set of matrices of height up to $x$
reduce to primitive elements of $GL(2, \F_p)$. In
particular, there is a matrix of height $O(p^{1/2+\ep})$ that reduces 
to a primitive element of $GL(2, \F_p)$.
\end{corollary}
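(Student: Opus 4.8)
The plan is to deduce Corollary \ref{lqpr} directly from the asymptotic formula of Theorem \ref{lqprthm} by comparing the count of primitive matrices against the total count of integer matrices in the box. First I would record that the number of integer matrices of height at most $x$ is $N(x)=(2\lfloor x\rfloor+1)^4=16x^4+O(x^3)$, since each of the four entries ranges independently over the integers in $[-x,x]$. The proportion in question is then $S(\delta_{\Omega_{prim}},x)/N(x)$, and dividing the main term of Theorem \ref{lqprthm} by $16x^4$ gives the limiting value $\frac{\phi(p^2-1)}{2(p^2-1)}(1-1/p)^2=\frac{|\Omega_{prim}|}{|G|}(1-1/p)^2$, using the identity \eqref{primitive} and the factorization $1-2/p+1/p^2=(1-1/p)^2$. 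For fixed $p$ this is a strictly positive constant.

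The key point is to control the $p$-dependence. Writing the main-term coefficient of $S(\delta_{\Omega_{prim}},x)$ as $c_p=\frac{8\phi(p^2-1)}{p^2-1}(1-1/p)^2$, the lower bound $\frac{\phi(p^2-1)}{p^2-1}\gg(\log\log p)^{-1}$ recalled before Theorem \ref{lqprthm}, together with $(1-1/p)^2\geq \tfrac14$, shows that $c_p\gg(\log\log p)^{-1}$, so the main term is $\gg(\log\log p)^{-1}x^4$. I would then check that each of the three error terms in \eqref{strong} is dominated by $\tfrac16 c_px^4$ once $x\gg p^{1/2+\ep}$: the term $O(x^3\sqrt p\log p)$ requires $x\gg c_p^{-1}\sqrt p\log p$, the term $O(x^2p\log p)$ requires $x\gg(c_p^{-1}p\log p)^{1/2}$, and the term $O(p^{2+\ep})$ requires $x\gg(c_p^{-1}p^{2+\ep})^{1/4}$; in every case the logarithmic and $\log\log p$ factors are absorbed into the $p^\ep$ saving, so $x\gg p^{1/2+\ep}$ suffices. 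Consequently, for $x$ in this range, $S(\delta_{\Omega_{prim}},x)\geq\tfrac12 c_px^4\gg(\log\log p)^{-1}x^4$, and dividing by $N(x)$ gives a proportion bounded below by a positive constant (depending on $p$). This is the positive-proportion assertion.

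The existence statement then follows at once: choosing $x=\lceil Cp^{1/2+\ep}\rceil$ with $C=C(\ep)$ large enough that the above domination holds, we have $S(\delta_{\Omega_{prim}},x)\geq\tfrac12 c_px^4>0$, so this count is a positive integer; hence there is at least one integer matrix of height $O(p^{1/2+\ep})$ whose reduction modulo $p$ is a primitive element. I expect the only delicate point to be the bookkeeping of the $p$-dependence in the error analysis, namely confirming that the slowly decaying coefficient $c_p\gg(\log\log p)^{-1}$ is still large enough relative to the three error terms in the stated range; this is precisely where the lower bound on $\phi(p^2-1)/(p^2-1)$ enters, and it is what pins the threshold at $x\gg p^{1/2+\ep}$ rather than the weaker $p^{3/4+\ep}$ obtained from the cruder approach.
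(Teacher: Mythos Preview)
Your proposal is correct and follows exactly the route the paper intends: the paper simply states that the corollary is ``immediate'' from Theorem~\ref{lqprthm}, and you have carefully supplied the details of that deduction---comparing the main term $c_p x^4$ to each of the three error terms and to the total count $N(x)$, and using the lower bound $\phi(p^2-1)/(p^2-1)\gg(\log\log p)^{-1}$ to absorb the slowly-varying coefficient into the $p^{\ep}$. There is nothing to add.
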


\begin{remark}
An interesting question is whether one can prove the existence of primitive elements in a one-parameter family of the form $\mathcal{A}=\{B+nI: 1\leq n \leq x\}$, where  $B$ is some suitable fixed matrix and $x>0$ is a parameter that we want to make as small as possible relative to $p$ (for example, $x=p^{1/2+\ve}$ would be a natural choice). In other words, we would like to know whether there is an integer $n$ which is not too large such that the eigenvalues of the matrix $B+nI$ are primitive roots
for $p^2$ (i.e., generators of the cyclic group $\F_{p^2}^{\ast}$). 

Assume that the characteristic polynomial of $B$ is not reducible over $\F_p$ and that  $\theta_1$ and $\theta_2$ are the eigenvalues of $B$. 
Then the eigenvalues of $B+nI$ are  $\theta_1+n$ and $\theta_2+n$, and thus we are led to the following general question:
 
 Suppose $q=p^m$, and $\theta$ is an element of $\F_{q}$ such that $\F_p(\theta)=\F_q$.  Is there some element $a\in \F_p$ such that $\theta+a$ is a primitive root 
for $q$ and if so, how small can we take $a$ to be (identifying the elements of $\F_p$ with integers from $0$ to $p-1$)? 

The study of such questions was
initiated by Davenport \cite{Dav2} and there have many works subsequently, e.g.,  \cite{Dav-Lew} and  \cite{Bur2}, to name a few. In \cite{PS}, Perel'muter and Shparlinski count the number of primitive roots for $q$
 in a set of the form $\{\theta+n:0\leq n \leq X\}$. It follows from their result that there are integers $n=O(p^{1/2+\ep})$ such that $\theta+n$ is a primitive root for $q$. This proves the existence of primitive matrices in one-parameter families of the form $B+nI$ with $n=O(p^{1/2+\ep})$, provided that
the characteristic polynomial of $B$ is irreducible over $\F_p$. 

Now, using Prop. \ref{thm:growth-elliptic} and the work of \cite{PS}, it is possible by a careful analysis to 
give an alternative proof of  Theorem \ref{lqprthm} and we have carried it out in \S \ref{Perel}. The error term we get by this 
method is a little different but there is no substantive change in the strength of the result. 

We emphasize here that the result in \cite{PS} depends
crucially on the Riemann Hypothesis for curves over finite field proved by Weil, whereas the first proof of Theorem \ref{lqprthm} we have given
in \S 6 using representation theory requires
no tool form Algebraic Geometry. A curious feature of the representation-theoretic proof is that the main term results not from the contribution of  the trivial representation alone and both the trivial representation and the Steinberg representation have to be considered together to obtain the main term. 
 \end{remark}
 
\begin{remark}
In view of Theorem \ref{main2}, one can replace the sum 
$S(\delta_{\Omega_{prim}}, x)), x)$ by the sum
$S(\delta_{\Omega_{prim}}, x)),A_0, x):=\sum_{h(A-A_0)\leq
  x}\delta_{\Omega_{prim}}(A)$ 
and arrive at a similar
estimate, where $A_0$ is some chosen base matrix. 
\end{remark}

\begin{remark} When $x$ is small, namely if $x<p$, the cofficient of the 
main term in Prop. \ref{thm:conjugacyclass} (resp. Theorem 
\ref{thm:growth-elliptic}, Theorem \ref{lqprthm}) can be taken to be 
$16|C|/|G|$ (resp. $8$, $8\phi(p^2-1)/(p^2-1)$).  
The correction factor $\gamma_p$ (resp. $(1-2/p+1/p^2)$, $(1-2/p+1/p^2)$) 
arises for larger $x$, due to 
the contribution from matrices that reduce to singular matrices
modulo $p$.
\end{remark} 

\subsection{Some general remarks}

\noindent
1. It will be interesting to extend our results to
  $GL(2, \Z/q\Z)$ for an arbitrary positive integer $q$. If $q$ is
  square-free, this group is a product of groups of the form
  $GL(2,\F_p)$ for primes $p$ dividing $q$, and the  
 irreducible representations of  $GL(2,\Z/q\Z)$ is 
 a tensor product of the irreducible representations of $GL(2,\F_p)$

\noindent 2. In the case of a Dirichlet character $\chi \modq$,  the
Polya-Vinogradov bound indicates cancellations as soon as the length
$X$ of the sum $\sum_{n \leq X} \chi(n) $ is somewhat larger than
$\sqrt{q}\log q$. However, cancellations do take place in sums of much
shorter length and cancellations in such shorter sums correspond to
strong bounds on the Dirichlet $L$-function.  Indeed, showing
cancellations in a sum of length $O(q^{1/2-\delta})$ for any
$\delta>0$ amounts to proving a subconvex estimate for $L(s, \chi)$
(see \cite[Chap. 5]{IK}) and the greater the value of $\delta$ we can
take, the  stronger is the bound on the $L$-function. In particular,
Lindel\"{o}f Hypeothesis on $L(s, \chi)$ corresponds to cancellations
in extremely short sums of length $O(q^{\ep})$ for any $\ep>0$. It
will be very interesting to develop of a theory of $L$-function
attached to a representations of $GL(n, {\F}_p)$ in order to study the
sums we are considering. It is not clear within what height we should
expect to find cancellations in the sums over matrices and, in
particular, whether the analogue of Linde\"{o}f hypotehsis should
hold. Any theory, even a conjectural one, for making a deeper analysis
of these sums will be welcome.

\noindent 3.  There are several natural choices for  a  height
functions other than the one considered here; e.g., the operator norm
or the $L^2$-norm of a matrix. It would be interesting to investigate
whether one could obtain similar results with other height functions.

\subsection{Main ideas behind the proofs and the structure of the
paper} The proof of Theorem \ref{main2} follows the usual approach for
proving the classical Polya-Vinogradov inequality. The periodicity of
$\chi_{\rho}$ allows one to consider the sum 
\[S(\chi_{\rho}, \mathbf{I}):=\sum_{A\in \mathbf{I}}\chi_{\rho}(A),\] 
as an inner product $<\chi_{\rho}, \delta_{\bar{\mathbf{I}}}>$ 
on the group $M(n,\Z/p\Z)$, where
  $\bar{\mathbf{I}}$ is the image of $\mathbf{I}$ under the natural projection map from $M(n, \Z)$ to
$M(n,\Z/p\Z)$. Applying the isometry of the Fourier
transform on  $M(n,\Z/p\Z)$, the problem reduces to that of estimating
two kinds of sums: sums of additive characters that lead to finite
geometric sums, and the matrix Gauss sums, including the singular Gauss
sums, that occur as Fourier transforms of $\chi_{\rho}$ with respect to
the characters of $M(n,\Z/p\Z)$.

For the non-singular Gauss sums, the formula of Kondo, namely
Eq. \eqref{kondo-exact} suffices but we need to analyze  the
singular Gauss sums as well.  After collecting some background
material on conjugacy classes and  representations of $GL(2, \F_p)$ in
\S 2, we analyze these singular Gauss sums for $GL(2,\F_p)$ and prove
the main result for them, namely Theorem \ref{theorem:sgs},  in \S 3.
In \S 4, we carry out the analytic part of the proof of Theorem
\ref{main2}, thus completing the proof.

The next sections are on applications.  Theorem
\ref{thm:growth-elliptic} is proved in \S 5 and to obtain the specific
error term, we use the classical Polya-Vinogradov bound together with
a counting argument.  The proof of Theorem \ref{lqprthm} is  given in
\S6.  A natural  idea here would be to first expand the indicator
function of the set $\Omega_{prim}$ in terms of the characters and
then to  apply Theorem \ref{main} and estimate the sum of the Fourier
coefficients. This is done in \S 6.1 after obtaining bounds for the
sum of the Fourier coefficients (see Lemma \ref{sumfc}) and we obtain
a weaker result, namely, Prop. \ref{weaklqpr}. 

Note that the problematic term $O(p^{3+\ep})$ in Prop. \ref{weaklqpr} arises from 
Theorem \ref{gaussest-general} and the bound $d(\rho) \leq p+1$.
In order to
improve upon this, we need to carefully analyze and accordingly
utilize the instances where  the estimate in 
Theorem \ref{gaussest-general}
 can be improved to $O(p^{2+\ep})$.
The one-dimensional representations do not pose a problem, and there is no contribution
from the principal series as their characters vanish on
$\Omega_{prim}$.  The
improvement arises from two crucial observations. One is the striking
fact that  $|\tr
(\rho (A))|\leq 2$ for non-central elements  of  $GL(2, \F_p)$ (see
Prop. \ref{curi}), which allows one to improve the estimate in
Theorem \ref{gaussest-general} by a factor of $p$ when $A$ is
non-singular.  The second observation is that the trivial and the Steinberg
representations are  related. Their contributions can be clubbed
together as the main
term, allowing one to avoid the problems arising from the  
contributions of the singular Gauss sums attached to 
the Steinberg reprsentation which are of order $p^3$. An appeal to
Prop. \ref{thm:growth-elliptic}  finishes the proof of Theorem \ref{lqprthm}. 

The proof of Prop. \ref{thm:growth-elliptic} rests only on the classical Polya-Vinogradov theorem, whereas that of Theorem \ref{lqprthm} makes use of the 
non-abelian versiod developed in this paper. Thus, the proof of Theorem \ref{lqprthm}, involves both the $GL(1)$ and $GL(2)$-versions of the Polya-Vinogradov type theorems. 
  
  Finally, in \S \ref{Perel}, we explain an alternative approach towards the problem of counting primitive elements using older results on exponential sums
  that depend crucially on the work of Weil on the Riemann Hypothesis for curves over finite fields. 

{\bf Acknowledgement.}  This work was started when the second author
visited ISI, Kolkata in March, 2016. Both the authors thank ISI and
TIFR, Mumbai where much of the work was carried out for excellent
working condition. The second author thanks MPIM,  Bonn for two visits
during May of 2018 and 2019, for an excellent working environment
allowing the authors to make progress on these questions. It is a
pleasure to acknowledge J.-M. Deshouillers, \'{E}. Fouvry, E. Ghate, 
H. Iwaniec, F. Jouve, D. Prasad, O. Ramar\'{e}, D.S. Ramana, S. Sen,
S. Varma for their interest, suggestions and encouragement.

\section{Conjugacy classes and representations of $GL(2,\F_p)$}

\subsection{Conjugacy classes in $GL(2,\F_p)$}
Let $p$ be an odd prime. 
We recall the classification of conjugacy classes in $GL(2,\F_p)$
(see \cite{FH}): 

\noindent{\em Central elements.}
The central elements given by scalar matrices. These have order
  dividing $(p-1)$. 

\noindent{\em Non-semisimple classes.}
The non-semisimple elements are conjugate to a matrix of
  the form 
$\bigl(\begin{smallmatrix} x &1\\ 0&x \end{smallmatrix} \bigr)$, 
with $x\in \F_p^*$. The order of these elements divides $p(p-1)$.

\noindent{\em Split semisimple classes.}
The non-central split semisimple elements are those whose characteristic
polynomials have distinct roots in $\F_p$. These are
conjugate to a matrix of the form  
$\bigl(\begin{smallmatrix} x &0\\ 0&y \end{smallmatrix} \bigr)$,  
with $x, ~y \in \F_p^*, ~x\neq y$.  These
elements have order dividing $(p-1)$.

\noindent{\em Elliptic semisimple classes.}  The elliptic (or
non-split) semisimple conjugacy classes are those whose characteristic
polynomials are irreducible over $\F_p$.  Let $\tau$ be a non-square
in $\F_p$, and  $\tau'\in \F_{p^2}$ be a squareroot of $\tau$. The
roots of the characteristic polynomial  can be written as
$\zeta_{x,y}=x+\tau'y$ and $\zeta_{x,y}^p=x-\tau'y$. The matrix
$d_{x,y}=\bigl(\begin{smallmatrix} x & \tau y\\ y & x
\end{smallmatrix} \bigr)$,  with $x, ~y, ~\tau \in \F_p, ~y\neq 0$ is
a representative for the conjugacy class determined by $\{\zeta_{x,y},
~\zeta_{x,y}^p\}$.  These elements have order dividing $(p^2-1)$. 

The action of $\F_{p^2}$ on itself by multiplication gives an
embedding of $\F_{p^2}^*$ into  $GL(2,\F_p)$ and thus a generator for
the cyclic group $\F_{p^2}^*$ yields an element   of  order $(p^2-1)$
in  $GL(2,\F_p)$.    The matrix $d_{x,y}$ is the matrix of the
transformation given by multiplication by  $\zeta_{x,y}=x+ {\tau}'y$
on $\F_{p^2}$ with respect to the basis $(1, \tau')$ of $\F_{p^2}$
over $\F_p$.   The determinant of  $d_{x,y}$ is $
N(\zeta_{x,y})=\zeta_{x,y}^{p+1}$, where $N: \F_{p^2}^*\to \F_p^*$ is
the norm map. 

Let $\Omega_e$ denote the set of elliptic semisimple elements in
$G=GL(2,\F_p)$. The centralizer of an elliptic element $d_{x,y}$ is
the group $\F_{p^2}^*$. Hence the number of elements in the conjugacy
class is $p^2-p$. Since the elliptic classes are parametrized  by
pairs of elements of the form $\{\zeta, \zeta^p\}$, with $\zeta \in
\F_{p^2}^* \backslash \F_p^*$, the number of elliptic conjugacy
classes is $(p^2-p)/2$. Thus the cardinality of $\Omega_e$ is
$(p^2-p)^2/2$. 

Let $\Omega_{prim}$ be the subset  of $\Omega_e$ consisting of elements
of order $p^2 -1$. From the description of the conjugacy classes we
note that these are the elements with maximum order  in $GL(2,\F_p)$
and can be thought of as two-dimensional analogues of primitive roots;
i.e., (elliptic) generators of $\F_{p^2}^*$. The number of such classes is
$(p^2-p)\phi(p^2-1)/2$, where $\phi$ denotes the Euler
$\phi$-function. The proportion of these classes in $G$ is given by, 
\[
\frac{|\Omega_{prim}|}{|G|}=\frac{\phi(p^2-1)(p^2-p)/2}{(p^2-1)(p^2-p)}
=\frac{\phi(p^2-1)}{2(p^2-1)}.\]

\subsection{Irreducible representations of  $GL(2,\F_p)$}\label{irred}
The irreducible complex representations of  $GL(2,\F_p)$ were
classified by Schur. Green (\cite{G}) constructed the irreducible
characters of $GL(n,\F_p)$ parametrized by the 
conjugacy classes in $GL(n,\F_p)$. We recall the classification 
of the  irreducible complex 
representations of  $G=GL(2,\F_p)$ (see \cite{FH}). 

\noindent{\em One dimensional representations.}
The one dimensional representations $U_{\chi}$, corresponding to
  the scalar matrices, defined by 
$U_{\chi}(A)=
  \chi(\deter(A))$, where $\chi$ is character of
  $\F_p^*$. There  are $(p-1)$ isomorphism classes, and 
\begin{equation}\label{char:abelian}
\chi(d_{x,y})=\chi(N(\zeta_{x,y})).
\end{equation}

\noindent{\em Irreducible Principal series.}
Given a subgroup $H$ of a finite group $G$, and a representation
$\theta$ of $H$ on $V$, a model for the 
induced representation $\rho=I_{H}^G(\theta)$ can be taken as follows: 
\begin{equation}\label{eqn:model0}
  I_{H}^G(\theta)=\{f:G\to V\mid f(gh)=\theta(h)^{-1}f(g)\quad \forall
  h\in H\}.
\end{equation}
The group $G$ acts on the left: $(\rho(g_0)f)(g)=f(g_0^{-1}g)$ for 
$g_0, g\in G$. 

Let $P$ (resp. $P'$, $U$, $U'$) denote the subgroups of  $G$
consisting of lower triangular (resp. upper triangular, unipotent lower
triangular, unipotent upper triangular) 
matrices in $GL(2,\F_p)$. 
The principal series representations $I_{\chi, \eta}$ are 
indexed by pairs of  distinct characters $\chi, \eta$ of
$\F_p^*$, and correspond to the non-central split semisimple conjugacy
classes.  Via the exact sequence, 
\[ 1\to U'\to P'\to (\F_p^*)^2\to 1,\]
$\chi\oplus \eta$ defines a representation of $P'$, and $I_{\chi,
  \eta}$
is defined  to be the induced
representation  $I_{P'}^{G}(\chi\oplus \eta)$.  
We have isomorphisms $I_{\chi, \eta}\simeq I_{\eta,
  \chi}$. The dimension of these representations is $p+1$, and the character
of these representations vanish on
the set of elliptic semisimple conjugacy classes. 

\noindent{\em Twists of Steinberg.} 
Given a character $\chi$ of $\F_p^*$, there is a decomposition, 
\[I_{P'}^{G}(\chi\circ \deter)= St_{\chi}\oplus \chi\circ \deter.\]
The Steinberg representation $St$ corresponds to the trivial
character $1_{P'}$ of $P'$. 
The induced representation  $I_{P'}^{G}(1_{P'})$ is the
regular action of $G$ on the space of functions on the 
projective line ${\mathbbm P}^1=G/P'$. Given two
functions $f_1, f_2$ on ${\mathbbm P}^1$, an invariant inner product is,
\[ \la f_1, f_2\ra =\sum_{x\in {\mathbbm P}^1}f_1(x)\overline{f_2(x)}. \]
The  Steinberg $St$ is the orthogonal complement of the trivial
representation in $I_{P'}^{G}(1_{P'})$. The underlying space
$V_{St}$ for the Steinberg is, 
\begin{equation}\label{Stmodel}
 V_{St}=\{ f:{\mathbbm P}^1\to \C\mid \sum_{x\in {\mathbbm P}^1}f(x)=0\}.
\end{equation}
We have $St_{\chi}=St\otimes \chi\circ \deter$.  
These representations correspond to the non-semisimple conjugacy classes. 
The dimension of these
  representations is $p$, and there are  $(p-1)$ representations upto
  isomorphism. The character $St_{\chi}$  on
  an elliptic semisimple element is given by 
\begin{equation}\label{char:steinberg}
\mbox{Tr}\left(St_{\chi}(d_{x,y})\right)=- \chi(N(\zeta_{x,y})).
\end{equation}

\noindent{\em Cuspidal representations.}  The cuspidal representations
$X_{\phi}$ are indexed by characters $\phi$  of $\F_{p^2}^*$
satisfying $\phi\neq \phi^p$. They  are defined by the property  that
the invariants with respect to the subgroup $U'$ is trivial, and
correspond to the elliptic conjugacy classes.  The dimension of these
representations is $p-1$, and there are  $(p^2-p)/2$ distinct cuspidal
representations. The character of $X_{\phi}$ vanishes on the split semisimple
conjugacy classes, and on elliptic conjugacy classes its value is, 
 \begin{equation}\label{char:cuspidal}
\mbox{Tr}\left(X_{\phi}(d_{x,y})\right)=- (\phi(\zeta_{x,y})+ \phi(\zeta_{x,y}^p)).
\end{equation}


\section{Singular Gauss sums}
In this section we compute the trace of  $G(\rho, A)$, where $A$ is a
singular matrix in $M(2,\F_p)$ and prove Theorem \ref{theorem:sgs}.
  We refer to $Tr\left(G(\rho, A)\right)$ as {\em singular Gauss
sums}. When $A$ is the zero matrix,
\[ G(\rho, A)=\sum_{X \in G} \rho(X)=\begin{cases} 0 \quad \mbox{if
      $\rho$ is irreducible, non-trivial,}\\
|G| \quad \mbox{if $\rho$ is trivial.}
\end{cases}
\]
Suppose now $A$ is a non-zero singular matrix. 
For any $Z \in G=GL(2,\F_p)$
\[
 G(\rho, ZAZ^{-1}) =\sum_{X \in G} \rho(X)e_p(tr(ZAZ^{-1}X))=
 \rho(Z)G(\rho, A) \rho(Z^{-1}). 
\]
Therefore, as far as determination of the trace of $G(\rho, A)$ is
concerned, it is enough to consider the matrices $A$ up to conjugacy:  
\begin{description}
\item[Semisimple case] $A_a:= \begin{pmatrix} a & 0\\ 0 &
  0\end{pmatrix},  ~~a\neq 0.$
\item[Nilpotent case]
$N= \begin{pmatrix} 0 & 1\\ 0 &
  0\end{pmatrix}.$
\end{description}

\subsection{A decomposition for the singular Gauss sum}
The calculation of the singular
Gauss sums uses a Bruhat type decomposition of $GL(2,\F_p)$.
\begin{lemma}
Let $P$ (resp. $P'$) and $U$ (resp. $U'$) denote the subgroups of 
lower triangular (resp upper triangular) 
and lower unipotent (resp. upper unipotent)
  matrices in $GL(2,\F_p)$.  Then
\begin{equation}\label{eqn:decomposition}
GL(2,\F_p)=PU'\sqcup  Pw=PU'\sqcup  wP'
\quad\mbox{and}\quad 
GL(2,\F_p)=U'wP'\sqcup  P',
\end{equation}
where $w
=\begin{pmatrix} 0 & 1\\ 1& 0 \end{pmatrix}$. 
\end{lemma}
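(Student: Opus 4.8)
The plan is to verify the two asserted decompositions of $GL(2,\F_p)$ directly, by a counting argument combined with an explicit coset analysis, treating the matrix $\bigl(\begin{smallmatrix} a & b\\ c & d\end{smallmatrix}\bigr) \in GL(2,\F_p)$ according to whether its lower-left entry $c$ vanishes. First I would record the orders of the relevant subgroups: $|P| = |P'| = (p-1)^2 p$, $|U| = |U'| = p$, and $|GL(2,\F_p)| = (p^2-1)(p^2-p) = p(p-1)^2(p+1)$. For the first decomposition, note that $P$ acts on the left on $G$, and the two pieces $PU'$ and $Pw$ are unions of left $P$-cosets; since $|PU'| = |P|\cdot|U'|/|P\cap U'| = |P|\cdot p$ (as $P \cap U' = \{I\}$) and $|Pw| = |P|$, the two pieces have total size $|P|(p+1) = |G|$, so it suffices to show they are disjoint and both contained in $G$. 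Disjointness is immediate because $w \notin PU'$: any element of $PU'$ has nonzero upper-left entry (it is $\bigl(\begin{smallmatrix} \alpha & 0\\ \gamma & \delta\end{smallmatrix}\bigr)\bigl(\begin{smallmatrix} 1 & t\\ 0 & 1\end{smallmatrix}\bigr) = \bigl(\begin{smallmatrix} \alpha & \alpha t\\ \gamma & \gamma t + \delta\end{smallmatrix}\bigr)$ with $\alpha \ne 0$), whereas $w$ and hence all of $Pw$ has upper-left entry zero.

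Next I would identify $Pw = wP'$: conjugation by $w$ swaps lower- and upper-triangular matrices, i.e. $w^{-1}Pw = P'$, hence $Pw = wP'$, giving the second equality in the first displayed chain. For the concrete content — that $PU' \sqcup Pw$ really is all of $G$ — one can argue directly rather than by counting: if $g = \bigl(\begin{smallmatrix} a & b\\ c & d\end{smallmatrix}\bigr) \in G$ has $a \ne 0$, then $g = \bigl(\begin{smallmatrix} a & 0\\ c & d - ca^{-1}b\end{smallmatrix}\bigr)\bigl(\begin{smallmatrix} 1 & a^{-1}b\\ 0 & 1\end{smallmatrix}\bigr) \in PU'$ (and $d - ca^{-1}b = a^{-1}\det g \ne 0$, so the left factor is indeed in $P$); if $a = 0$ then $bc \ne 0$ and $gw^{-1} = gw = \bigl(\begin{smallmatrix} b & 0\\ d & c\end{smallmatrix}\bigr) \in P$, so $g \in Pw$. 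This simultaneously proves surjectivity and, with the earlier observation on the upper-left entry, disjointness.

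For the third decomposition $G = U'wP' \sqcup P'$, I would run the mirror-image argument using the lower-left entry as the distinguishing invariant. An element of $P'$ has lower-left entry zero, while $U'wP' = \bigl(\begin{smallmatrix} 1 & s\\ 0 & 1\end{smallmatrix}\bigr)w\bigl(\begin{smallmatrix} \alpha & \beta\\ 0 & \delta\end{smallmatrix}\bigr) = \bigl(\begin{smallmatrix} s & 1\\ 1 & 0\end{smallmatrix}\bigr)\bigl(\begin{smallmatrix} \alpha & \beta\\ 0 & \delta\end{smallmatrix}\bigr) = \bigl(\begin{smallmatrix} s\alpha & s\beta + \delta\\ \alpha & \beta\end{smallmatrix}\bigr)$ has lower-left entry $\alpha \ne 0$; this gives disjointness. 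Surjectivity: given $g = \bigl(\begin{smallmatrix} a & b\\ c & d\end{smallmatrix}\bigr)$, if $c = 0$ then $g \in P'$ outright; if $c \ne 0$ then taking $s = ac^{-1}$, the matrix $\bigl(\begin{smallmatrix} 1 & -s\\ 0 & 1\end{smallmatrix}\bigr)g$ has lower-left entry $c$ and upper-left entry $a - sc = 0$, hence equals $w$ times an upper-triangular matrix, so $g \in U'wP'$. A cardinality check ($|U'wP'| = |U'|\cdot|P'| = p\cdot p(p-1)^2$ since the product map $U' \times \{w\} \times P' \to G$ is injective by the displayed formula, and $|U'wP'| + |P'| = p(p-1)^2(p+1) = |G|$) corroborates the computation but is not strictly needed once surjectivity and disjointness are in hand.

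I do not anticipate a genuine obstacle here: this is the Bruhat decomposition for $GL(2)$ written out in coordinates, and the only care needed is in checking that each asserted product set is exactly a union of cosets of the appropriate one-sided type, so that the disjointness and exhaustiveness claims are meaningful; the mild subtlety is that the two $2 \times 2$ diagonal blocks of the "big cell" must be handled by explicitly solving for the unipotent and diagonal parameters, which is the short computation indicated above. If one prefers to avoid even that, the pure counting argument (two disjoint coset-unions whose sizes sum to $|G|$) closes each case once disjointness is established, and disjointness in every case reduces to the single trivial observation that the distinguishing matrix entry (upper-left, respectively lower-left) is zero on one piece and nonzero on the other.
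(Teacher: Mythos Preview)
Your proof is correct. The paper takes a different, more concise route: it simply declares the second decomposition $G = U'wP' \sqcup P'$ to be the standard Bruhat decomposition (cited as known), and then derives the first decomposition from the variant $G = PwU \sqcup P$ by multiplying on the right by $w$ and invoking $wUw = U'$ and $wPw = P'$. Your approach, by contrast, verifies both decompositions from scratch via explicit coordinate computations, using the upper-left (respectively lower-left) entry as a separating invariant for the two cells, supplemented by a cardinality count. The paper's argument is shorter but treats Bruhat as a black box; yours is entirely self-contained and makes the coset structure transparent, which fits well with the hands-on matrix manipulations the paper carries out immediately afterward.
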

\begin{proof} The second decomposition is the Bruhat decomposition. 
The first decomposition can be obtained from the
  Bruhat decomposition $GL(2,\F_p)=PwU\sqcup  P$ by multiplying on the
  right by $w$, and using the fact that $wUw=U', ~wPw=P'$. 
\end{proof}
The group $P$ of upper triangular matrices factorizes  
as a product $P =U\times M\times L$,
where \[M= \left\{\begin{pmatrix} 1 & 0\\ 0& m \end{pmatrix}: m \neq 0 \right\} \textnormal{ and }
L= \left\{\begin{pmatrix} l & 0\\ 0& 1 \end{pmatrix}: \ell \neq 0 \right\}.
\]
 We shall write an element $X\in PU'$ as 
\begin{equation}\label{eqn:ulmu'}
X=x_u  x_l x_m x_{u'}, 
\end{equation}
where 
\[x_u = \begin{pmatrix} 1 & 0\\ u& 1\end{pmatrix},\quad
 x_m =\begin{pmatrix} 1 & 0\\ 0 & m\end{pmatrix}, \quad 
 x_l = \begin{pmatrix} l & 0\\ 0& 1 \end{pmatrix} \quad \mbox{and}\quad
x_{u'}=\begin{pmatrix} 1 & u'\\ 0& 1 \end{pmatrix}.\]
Note that
such a representation is unique. We also note that $x_l$ and $x_m$ commute.
Similarly we shall write an element $X \in wP'$ as
$$X= w x_l x_m x_{u'} .$$  

Corresponding to the first decomposition given in the foregoing lemma,
we write
\[
 G(\rho, A)= G_1(\rho, A) +G_2(\rho, A), 
\]
where 
\begin{equation}\label{GSdecomposition}
 G_1(\rho, A)= \sum_{X \in PU'} \rho(X)e\left(\frac{tr(AX)}{p}\right)\quad
\mbox{and}\quad G_2(\rho, A)=\sum_{X \in wP'}
\rho(X)e\left(\frac{tr(AX)}{p}\right).
\end{equation}
We now compute the traces. For $A_a:= \begin{pmatrix} a & 0\\ 0 &
  0\end{pmatrix}$, a semisimple singular matrix, 
\begin{equation}\label{eqn:traceAs}
\tr(A_a x_u  x_l x_m x_{u'})=al\quad \mbox{and} 
\quad \tr(A_a w x_l x_m x_{u'})=0.
\end{equation}
When $N= \begin{pmatrix} 0 & 1\\ 0 &
  0\end{pmatrix}, $ the traces are, 
\begin{equation}\label{eqn:traceAn}
\tr(Nx_u  x_l x_m x_{u'})=ul\quad \mbox{and} 
\quad \tr(Nw x_l x_m x_{u'})=l.
\end{equation}

\subsection{Vanishing criteria for the singular Gauss sums}
Given a representation $\rho: G\mapsto GL(V)$ 
and a subgroup $H$ of $G$, the projection operator $Pr_{H}\in \mbox{End}(V)$
\[
 Pr_{H} (v)= \left(\frac{1}{|H|}\sum_{h \in H} \rho(h)\right) (v), 
\]
maps $V$ to the subspace $V^{H}$ of vectors invariant under $H$. 
The operator satisfies the projection property $Pr_{H}^2=Pr_{H}$. 

The reason for splitting the singular Gauss sums in terms of the
Bruhat decomposition are the following expressions for $G_1$ and
$G_2$ in terms of projection operators: 

\begin{align}
G_1(\rho, A_a)&=\sum_{X \in PU'} e(al/p)\rho(x_u  x_l)\rho(x_m
  x_{u'})=|MU'|\sum_{\substack{u\in \F_p\\ l\in \F_p^*}}
  e(al/p)\rho(x_u  x_l)Pr_{MU'}
\label{eqn:proj1}\\
G_2(\rho, A_a)&=\sum_{X \in P'}\rho(w)\rho(X)=|P'|\rho(w)Pr_{P'}\label{eqn:proj2}\\
G_1(\rho, N)&=\sum_{X \in PU'} e(ul/p)\rho(x_u  x_l)\rho(x_m
  x_{u'})=|MU'|\sum_{\substack{u\in \F_p\\l\in \F_p^*}}
  e(ul/p)\rho(x_u  x_l)Pr_{MU'}
\label{eqn:proj3}\\
G_2(\rho, N)&=\sum_{X \in P'}e(l/p)\rho(w)\rho(x_l)\rho(x_mx_{u'})
=|MU'|\rho(w)\sum_{l\in \F_p^*}e(l/p)\rho(x_l)Pr_{MU'}.\label{eqn:proj4}
\end{align}

As all the above sums involve the projection operator $Pr_{MU'}$, we observe
the following easy consequence:
\begin{proposition}\label{cor:singvanish1}
Let $A$ be a non-zero singular matrix in $M(2,\F_p)$. Suppose 
$\rho$ is a non-trivial irreducible representation of $GL(2,\F_p)$
acting on the space $V_{\rho}$. Then, the singular Gauss sums 
$G(\rho, A)$ vanish if $V_{\rho}^{MU'}=(0)$. 

Further,  if $V_{\rho}^{P'}=(0)$, then  $G_2(\rho, A_a)$ vanishes.

For the trivial representation $1_G$, 
the singular Gauss sums are equal to
$-p(p-1)$. 
\end{proposition}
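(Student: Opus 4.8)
\textbf{Proof proposal for Proposition \ref{cor:singvanish1}.}

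The plan is to read off all three assertions directly from the decomposition $G(\rho,A)=G_1(\rho,A)+G_2(\rho,A)$ together with the four formulas \eqref{eqn:proj1}--\eqref{eqn:proj4}, which express each piece as a sum involving the projection operator $Pr_{MU'}$ (and, in the case of $G_2(\rho,A_a)$, the operator $Pr_{P'}$). First I would observe that every one of the four displayed expressions has $Pr_{MU'}$ or $Pr_{P'}$ appearing on the right, so that the operator $G(\rho,A)$ factors through the corresponding invariant subspace. Concretely, for a non-trivial irreducible $\rho$ and a non-zero singular $A$, each of $G_1(\rho,A_a)$, $G_1(\rho,N)$, $G_2(\rho,N)$ is of the form $(\text{operator})\circ Pr_{MU'}$; if $V_\rho^{MU'}=(0)$ then $Pr_{MU'}=0$, so all three vanish. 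It remains to handle $G_2(\rho,A_a)$: by \eqref{eqn:proj2} it equals $|P'|\rho(w)Pr_{P'}$, which is zero precisely when $Pr_{P'}=0$, i.e. when $V_\rho^{P'}=(0)$. Since $P'\supset MU'$ — indeed $MU'$ is the subgroup of upper unipotent-times-diagonal matrices with a fixed $1$ in the top-left entry, sitting inside the upper triangular group $P'$ — we have $V_\rho^{P'}\subseteq V_\rho^{MU'}$, so $V_\rho^{MU'}=(0)$ already forces $G_2(\rho,A_a)=0$ as well. This gives the first two sentences of the proposition, with the second sentence being the weaker hypothesis $V_\rho^{P'}=(0)$ needed only for the $G_2(\rho,A_a)$ term.

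For the trivial representation $1_G$, the operators $\rho(x_u x_l)$, $\rho(w)$, $Pr_{MU'}$, $Pr_{P'}$ are all the identity (the scalar $1$), so the formulas collapse to pure exponential sums. Using \eqref{eqn:proj1} and \eqref{eqn:proj2} with $\rho=1_G$ and $A=A_a$: $G_1(1_G,A_a)=|MU'|\sum_{u\in\F_p}\sum_{l\in\F_p^*}e_p(al)$, and since $\sum_{l\in\F_p^*}e_p(al)=-1$ for $a\neq 0$ while the $u$-sum contributes a factor $p$, and $|MU'|=(p-1)p$, one gets $G_1(1_G,A_a)=(p-1)p\cdot p\cdot(-1)/?$ — here I would be careful: $|MU'|$ already counts $|M|\cdot|U'|=(p-1)\cdot p$, and the sum $\sum_{u,l}$ runs over $|U|\cdot|L|=p\cdot(p-1)$ terms with the summand $e_p(al)$ independent of $u$, giving $G_1(1_G,A_a)=(p-1)p\cdot p\cdot(-1)=-p^2(p-1)$; meanwhile $G_2(1_G,A_a)=|P'|=(p-1)^2 p$. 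Adding, $G(1_G,A_a)=-p^2(p-1)+(p-1)^2 p=p(p-1)\bigl(-(p-1)+(p-1)\bigr)$ — this needs a sign/normalization check, and I would instead simply compute $G(1_G,A)=\sum_{X\in G}e_p(\tr(AX))$ directly from the definition \eqref{gausssum}, which is the cleanest route. The character sum $\sum_{X\in M(2,\F_p)}e_p(\tr(AX))$ over the \emph{full} matrix algebra vanishes for $A\neq 0$ by orthogonality, so $\sum_{X\in G}e_p(\tr(AX))=-\sum_{X\in M(2,\F_p)\setminus G}e_p(\tr(AX))$, and the latter sum over singular matrices is evaluated (for $A$ of rank $1$) by a short direct count, yielding $-p(p-1)$.

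The main obstacle is purely bookkeeping: getting the normalization of $|MU'|$, $|P'|$, and the ranges of the index variables consistent between the two pieces $G_1$ and $G_2$ so that the final arithmetic for $1_G$ comes out to exactly $-p(p-1)$ rather than off by a power of $p$ or a sign. There is no conceptual difficulty — the vanishing statements are immediate from the presence of $Pr_{MU'}$ (respectively $Pr_{P'}$) in \eqref{eqn:proj1}--\eqref{eqn:proj4} and the inclusion $MU'\subseteq P'$ — so once the counting constants are pinned down (most safely by re-deriving $G(1_G,A)=\sum_{X\in GL(2,\F_p)}e_p(\tr(AX))$ from scratch via the full-matrix-algebra orthogonality and subtracting off the singular locus), the proof is complete.
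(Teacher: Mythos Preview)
Your vanishing argument is correct and is exactly what the paper does: equations \eqref{eqn:proj1}--\eqref{eqn:proj4} make $G_1(\rho,A_a)$, $G_1(\rho,N)$, $G_2(\rho,N)$ right multiples of $Pr_{MU'}$ and $G_2(\rho,A_a)$ a right multiple of $Pr_{P'}$, so the first two assertions drop out, and your observation $V_\rho^{P'}\subseteq V_\rho^{MU'}$ (from $MU'\subset P'$) correctly subsumes the $G_2(\rho,A_a)$ term under the hypothesis $V_\rho^{MU'}=(0)$.

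For the trivial representation you started down the paper's route and then abandoned it after a slip. The slip is in the factoring: $-p^2(p-1)+p(p-1)^2=p(p-1)\bigl(-p+(p-1)\bigr)=-p(p-1)$, not $p(p-1)\bigl(-(p-1)+(p-1)\bigr)$. With that corrected, the $A_a$ case is done. You should also carry out the nilpotent case $A=N$ the same way: from \eqref{eqn:proj3}, $G_1(1_G,N)=|MU'|\sum_{u\in\F_p,\,l\in\F_p^*}e_p(ul)=0$ (the $u$-sum over $\F_p$ kills each $l\neq 0$ term), and from \eqref{eqn:proj4}, $G_2(1_G,N)=|MU'|\sum_{l\in\F_p^*}e_p(l)=-|MU'|=-p(p-1)$, so again $G(1_G,N)=-p(p-1)$. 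This is precisely the paper's computation.

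Your alternative route for the trivial case --- using $\sum_{X\in M(2,\F_p)}e_p(\tr(AX))=0$ to reduce to a sum over the singular locus --- is valid and genuinely different from the paper, but it is not a ``short direct count'': one must parametrize rank-$1$ matrices as $vw^{T}$ modulo scaling and evaluate $\frac{1}{p-1}\sum_{v\neq 0}\sum_{w\neq 0}e_p(w^{T}Av)$, which takes about as much work as the Bruhat computation. The advantage of the paper's approach is that the same $G_1/G_2$ decomposition is reused later for the Steinberg and principal-series cases, so nothing is wasted.
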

\begin{proof}
Only the part about the trivial representation needs to be proved. We
have, 
\begin{align*}
G_1(1_G, A_a)&=|MU'|\sum_{\substack{u\in \F_p\\ l\in \F_p^*}} e(al/p)=-|MU'|p\\
&=-p^2(p-1).\\
G_2(1_G, A_a)&=|P'|=p(p-1)^2.
\end{align*}
Hence, $G(1, A_a)=-p^2(p-1)+p(p-1)^2=-p(p-1)$. 

Similarly, 
\begin{align*}
 G_1(1, N)&=|MU'|\sum_{\substack{u\in \F_p\\l\in \F_p^*}}
e(ul/p)=0.\\
\mbox{and} \quad G_2(1, N)&=|MU'|\rho(w)\sum_{l\in \F_p^*}e(l/p)=-|MU'|\\
&=-p(p-1).
\end{align*}
\end{proof}

\subsection{Vanishing of certain singular Gauss sums}
We now classify those irreducible representations of  $GL(2,\F_p)$
whose $MU'$-invariants are non-zero:
\begin{lemma}\label{lemma:MU'zero}
Let $\rho$ be a non-trivial irreducible representation of $GL(2,\F_p)$
acting on the space $V_{\rho}$. Then 
the invariant space $V_{\rho}^{MU'}$ is at most one dimensional. 

If the space $V_{\rho}^{MU'}$ is
non-zero, then $\rho$ is isomorphic either to the Steinberg
representation $St$, or one of the irreducible principal series
representations $I_{\chi, 1}$ with $\chi$ a non-trivial character of
$\F_P^*$. 

The space  $V_{\rho}^{P'}$ is non-zero only for the Steinberg
representation. 
\end{lemma}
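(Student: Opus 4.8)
The plan is to read off $\dim V_\rho^{H}$, for $H\in\{MU',P'\}$ and $\rho$ an irreducible representation of $G=GL(2,\F_p)$, from Frobenius reciprocity:
\[\dim V_\rho^{H}=\dim\mathrm{Hom}_H(\mathbf 1,\rho|_H)=\dim\mathrm{Hom}_G(\mathrm{Ind}_H^G\mathbf 1,\rho),\]
which for irreducible $\rho$ is simply the multiplicity of $\rho$ in the permutation representation $\mathrm{Ind}_H^G\mathbf 1\cong\C[G/H]$. Thus the entire lemma reduces to decomposing the two induced representations $\mathrm{Ind}_{P'}^G\mathbf 1$ and $\mathrm{Ind}_{MU'}^G\mathbf 1$ into irreducibles, and everything required for this is already contained in \S\ref{irred}.

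The assertion about $P'$ is then immediate. By \S\ref{irred}, $\mathrm{Ind}_{P'}^G\mathbf 1=I_{P'}^G(1_{P'})$ is the representation of $G$ on functions on $\mathbb P^1=G/P'$, and this decomposes as $1_G\oplus St$. Hence for every non-trivial irreducible $\rho$ we get $\dim V_\rho^{P'}\le 1$, with equality precisely when $\rho\simeq St$.

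For $MU'$ I would induce in stages through $P'\supset MU'$. First, $MU'$ is normal in $P'$: conjugating $\bigl(\begin{smallmatrix}1&u'\\0&m\end{smallmatrix}\bigr)$ by an upper triangular matrix again produces a matrix of the form $\bigl(\begin{smallmatrix}1&*\\0&m\end{smallmatrix}\bigr)$, a one-line check. The homomorphism $P'\to\F_p^*$ sending $\bigl(\begin{smallmatrix}a&b\\0&d\end{smallmatrix}\bigr)$ to $a$ has kernel exactly $MU'$, so $P'/MU'\cong\F_p^*$, and therefore $\mathrm{Ind}_{MU'}^{P'}\mathbf 1$ is the inflation to $P'$ of the regular representation of $\F_p^*$, i.e. $\bigoplus_{\chi}(\chi\oplus 1)$ in the notation of \S\ref{irred}, the sum being over all characters $\chi$ of $\F_p^*$. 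Inducing up to $G$ and using transitivity of induction gives $\mathrm{Ind}_{MU'}^G\mathbf 1\cong\bigoplus_{\chi}I_{\chi,1}$, where $I_{\chi,1}$ is the irreducible principal series of dimension $p+1$ for $\chi\neq 1$, while for $\chi=1$ it degenerates to $I_{P'}^G(1_{P'})=1_G\oplus St$. Since $I_{\chi,1}\simeq I_{\chi',1}$ forces $\chi=\chi'$, the summands are pairwise non-isomorphic, so
\[\mathrm{Ind}_{MU'}^G\mathbf 1\;\cong\;1_G\oplus St\oplus\bigoplus_{\chi\neq 1}I_{\chi,1},\]
a multiplicity-free decomposition (as a check, $1+p+(p-2)(p+1)=p^2-1=[G:MU']$). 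By Frobenius reciprocity this gives $\dim V_\rho^{MU'}\le 1$ for every $\rho$, with $\dim V_\rho^{MU'}=1$ exactly when $\rho$ is one of $1_G$, $St$, or $I_{\chi,1}$ with $\chi$ non-trivial; discarding the trivial representation is the statement of the lemma.

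I do not expect a genuine obstacle here: the argument is bookkeeping with induced representations, and the only point that needs a little care is the normality of $MU'$ in $P'$ together with the identification of $\mathrm{Ind}_{MU'}^{P'}\mathbf 1$, after which induction in stages finishes the job. An alternative route, closer in spirit to the character computations of the later sections, is to evaluate $\dim V_\rho^{MU'}=\frac{1}{|MU'|}\sum_{g\in MU'}\chi_\rho(g)$ directly: the $p(p-1)$ elements of $MU'$ consist of the identity, the $p-1$ conjugates of $\bigl(\begin{smallmatrix}1&1\\0&1\end{smallmatrix}\bigr)$, and, for each $m\in\F_p^*\setminus\{1\}$, the $p$ split semisimple elements with eigenvalues $\{1,m\}$; inserting the character values recalled in \S\ref{irred} and using $\sum_{a\in\F_p^*}\chi(a)=0$ for non-trivial $\chi$ collapses the sum to $|MU'|$ exactly in the three cases above and to $0$ otherwise. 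I would present the representation-theoretic version, as it reuses \S\ref{irred} verbatim.
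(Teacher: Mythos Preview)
Your proposal is correct and follows essentially the same route as the paper: both arguments use Frobenius reciprocity to identify $\dim V_\rho^{MU'}$ with the multiplicity of $\rho$ in $I_{MU'}^G(\mathbf 1)$, induce in stages through $P'$, decompose $I_{MU'}^{P'}(\mathbf 1)$ as the sum of the characters $\chi\otimes 1_M$, and then invoke the classification of \S\ref{irred} to obtain the multiplicity-free decomposition $1_G\oplus St\oplus\bigoplus_{\chi\neq 1}I_{\chi,1}$. Your version is slightly more explicit in places (the normality of $MU'$ and the identification $P'/MU'\cong\F_p^*$), and you offer an alternative direct character computation, but the substance is the same.
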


\begin{proof}
Given a representation $\eta$ of a subgroup $H$ of a finite group $G$ and a
representation $\rho$ of $G$, Frobenius reciprocity gives an
isomorphism, 
\begin{equation}\label{eqn:frobenius} \mbox{Hom}_G(I_H^G(\eta), \rho)\simeq \mbox{Hom}_H(\eta,
  \mbox{Res}_G^H(\rho)),
\end{equation}
where $\mbox{Res}_G^H(\rho)$ denotes the restriction of $\rho$ to
$H$.

To say that  $V_{\rho}^{MU'}$ is non-zero means that the trivial
representation $1_{MU'}$ occurs in the restriction of $\rho$
to $MU'$. By Frobenius reciprocity, this is equivalent to  $\rho$
being a subrepresentation of $I_{MU'}^G(1_{MU'})$. Inducing
in stages to $P'$ and then to $G=GL(2,\F_p)$ we have, 
\[I_{MU'}^G(1_{MU'})=I_{P'}^G(I_{MU'}^{P'}(1_{MU'})).\]
Let $\chi$ be a character of $\F_p^*$. Consider
$\chi\otimes1_M,$ as a character of $P'$ with its $M$
component being trivial, defined  by the formula
$\chi(x_lx_mx_{u'})=\chi(l)$. By definition, these characters are
trivial on $MU'$. By Frobenius reciprocity applied to $MU'\subset
P'$, these appear as constituents in
$I_{MU'}^{P'}(1_{MU'})$. Since the index of $MU'$ in $P'$ is
$(p-1)$, dimension count yields an isomorphism, 
\[I_{MU'}^{P'}(1_{MU'})=\oplus_{\chi\in
    \hat{L}}\chi\otimes1_M. \]
Hence, 
\[ I_{MU'}^G(1_{MU'})=\oplus_{\chi\in
    \hat{L}}I_{P'}^G(\chi\otimes1_M). \]
From the classification of
irreducible representations of $G$, we obtain
\begin{equation}\label{decomp:indtrivial}
I_{MU'}^G(1_{MU'})=\oplus_{\chi\in
    \hat{L}, \chi\neq 1_L} I_{\chi, 1}\oplus St\oplus
  1_{G}.
\end{equation}
Among these representations, only
$St$ and the trivial representation of $G$ have a non-zero subspace of
  $P'$-fixed vectors.

As a consequence of Frobenius reciprocity and the
fact that the decomposition given by Eq. \eqref{decomp:indtrivial}
is multiplicity free, it follows 
that the space of invariant vectors under $MU'$ is
at most one-dimensional. 
\end{proof}

From Prop. \ref{cor:singvanish1}, Lemma \ref{lemma:MU'zero} and
the classification of representations, 
we conclude the following proposition, proving 
in particular, Part (\ref{vanishingpart}) of Theorem \ref{theorem:sgs}:
\begin{proposition}\label{prop:GSvanishing}
Let $\rho$ be a non-trivial irreducible representation   of
$GL(2,\F_p)$ not isomorphic to the Steinberg or to $I_{\chi, 1}$ for a
non-trivial character $\chi$ of $\F_p^*$. 
Then for any non-zero singular matrix $A$, $G(\rho, A)=0$. 

For a non-trivial irreducible representation   of
$GL(2,\F_p)$,  $G_2(\rho, A_a)$ vanishes unless  $\rho$ is
isomorphic to the Steinberg. 
\end{proposition}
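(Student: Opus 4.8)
The plan is to deduce the proposition directly from Proposition \ref{cor:singvanish1} and Lemma \ref{lemma:MU'zero}, together with the classification of the irreducible representations of $GL(2,\F_p)$ recalled in Section \ref{irred}. No fresh computation is needed: the formulas \eqref{eqn:proj1}--\eqref{eqn:proj4} have already localized each singular Gauss sum onto the invariant subspaces $V_\rho^{MU'}$ and $V_\rho^{P'}$, and Lemma \ref{lemma:MU'zero} tells us exactly which representations make these subspaces non-zero, so the whole argument is a matter of combining the two.

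For the first assertion, suppose $\rho$ is a non-trivial irreducible representation of $GL(2,\F_p)$ isomorphic to neither the Steinberg representation nor to $I_{\chi,1}$ for a non-trivial character $\chi$ of $\F_p^*$. By Lemma \ref{lemma:MU'zero} the space $V_\rho^{MU'}$ of $MU'$-fixed vectors is non-zero only for those two families, so in our case $V_\rho^{MU'}=(0)$ and the operator $Pr_{MU'}$ is zero on $V_\rho$. Since $MU'\subseteq P'$ we also have $V_\rho^{P'}\subseteq V_\rho^{MU'}=(0)$, so $Pr_{P'}=0$ as well. Inspecting \eqref{eqn:proj1}--\eqref{eqn:proj4}, each of $G_1(\rho,A_a)$, $G_2(\rho,A_a)$, $G_1(\rho,N)$ and $G_2(\rho,N)$ is a sum of operators every term of which ends in $Pr_{MU'}$ or $Pr_{P'}$, hence all vanish; this is precisely the first statement of Proposition \ref{cor:singvanish1}, which I would simply cite to conclude $G(\rho,A)=0$ for every non-zero singular $A$.

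For the second assertion, formula \eqref{eqn:proj2} reads $G_2(\rho,A_a)=|P'|\rho(w)Pr_{P'}$, so $G_2(\rho,A_a)$ vanishes exactly when $V_\rho^{P'}=(0)$. By the final clause of Lemma \ref{lemma:MU'zero}, $V_\rho^{P'}$ is non-zero — among the non-trivial irreducibles — only for the Steinberg representation, whence $G_2(\rho,A_a)=0$ unless $\rho\simeq St$. Once more this is nothing but the second part of Proposition \ref{cor:singvanish1}.

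I do not expect a genuine obstacle here: the proposition is essentially a packaging statement that converts the representation-theoretic input of Lemma \ref{lemma:MU'zero} into the vanishing statements required by Theorem \ref{theorem:sgs}. The only point worth a second's care is verifying that the list of exceptional representations in Lemma \ref{lemma:MU'zero} is exhaustive and correctly stated — in particular that one cannot have $V_\rho^{MU'}\neq (0)$ for $I_{\chi,1}$ with $\chi$ trivial. This is automatic because $I_{1,1}$ is not irreducible (it decomposes as $St\oplus 1_G$), which is exactly why only the terms with $\chi\neq 1_L$ survive in \eqref{decomp:indtrivial}; with that noted, the proposition follows.
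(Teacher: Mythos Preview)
Your proposal is correct and matches the paper's approach exactly: the proposition is deduced immediately from Proposition~\ref{cor:singvanish1} and Lemma~\ref{lemma:MU'zero} together with the classification of irreducible representations. Your added remarks (the inclusion $V_\rho^{P'}\subseteq V_\rho^{MU'}$ and the observation that $I_{1,1}$ is reducible) are helpful elaborations but not additional ingredients.
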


\subsection{Invariant elements in induced representations}
In order to calculate the traces of the singular Gauss sums, we
calculate explicitly the invariant element and the projection to the
space of invariants with respect to the action of $MU'$. 

Given a character $\theta$ of $P'$, a model for the 
induced representation $\rho=I_{P'}^G(\theta)$ is given as follows: 
\begin{equation}\label{eqn:model}
  I_{P'}^G(\theta)=\{f:G\to \C\mid f(gp')=\theta(p')^{-1}f(g)\}.
\end{equation}
The group $G$ acts on the left: $(\rho(g_0)f)(g)=f(g_0^{-1}g)$ for 
$g_0, g\in G$. From the Bruhat decomposition 
$G=U'wP'\sqcup  P'$ a  collection of left coset representatives for $P'$ in
$G$ is given by $U'w$ and the identity element $e$ of $G$. Thus an
element of $\rho$ is determined by its values on $U'w$ and $e$. 

The natural action of $GL(2,\F_p)$ on $\F_p^2$ induces
  a transitive  action of $GL(2,\F_p)$ on the projective 
line ${\mathbbm P}^1(\F_p)$
  consisting of the lines through the origin in  $\F_p^2$.  The
identity coset $eP'$ of $P'$ 
is the isotropy group of the point at `infinity' given by the line
  defined by the vector $\begin{pmatrix} 1\\ 0 \end{pmatrix}$ in
  $\F_p^2$.  The group $U'$ can be identified with its orbit through
  the point `zero' given by 
$\begin{pmatrix} 0\\ 1 \end{pmatrix}=w\begin{pmatrix} 1\\
  0 \end{pmatrix}$. 
This is precisely the affine line $\A^1(\F_p)$. The Weyl element $w$ switches
  the points zero and infinity of ${\mathbbm P}^1(\F_p)$.

It follows that the restriction of $\rho$ to $U'$ splits as a direct
sum of two representations: 
\begin{equation}\label{eqn:resU'}
\rho\mid_{U'}\simeq R_{U'}\oplus 1_{U'}, 
\end{equation}
where $R_{U'}$ is the regular representation of $U'$ on the space of
functions on $U'$. The trivial representation of $U'$ corresponds to
the element of $\rho$ `supported' at infinity.

\begin{lemma}\label{lem:invariantelt}
(a) Let $\chi$ be a non-trivial character of $\F_p^*$ and 
$\rho=I_{P'}^G(\chi\otimes1_M)$ 
be the irreducible representation of $GL(2,\F_p)$
with the model given by Eq. (\ref{eqn:model}). 

Consider the function  $\delta_{in}$ of $G$  defined by, 
\[\delta_{in}(g)=\begin{cases} 0 \quad \quad \quad \mbox{if $g\not\in P'$}\\
\chi(l)^{-1} \quad \mbox{if $g=x_l x_m x_{u'}\in P'$}.
\end{cases}
\]
The function $\delta_{in}$ belongs to the space underlying $\rho$, and
spans the one dimensional
space of $MU'$-invariants of $\rho$.   For
an element $f\in I_{P'}^G(\chi\oplus 1_M)$, 
\begin{equation}\label{eqn:projps}
Pr_{MU'} (f)=\left(\frac{1}{|MU'|}\sum_{x \in MU'} \rho(x)\right)
(f)= f(e)\delta_{in}.
\end{equation}

(b) The space of $MU'$-invariant elements of the Steinberg for the
model given by Eq. (\ref{Stmodel}) is the space spanned by the
function $\delta_{in}=\delta_{\infty}-\frac{1}{p}\delta_{\A^1}$, 
where $\delta_{\infty}$ is the function supported at `infinity' with
value $1$, and $\delta_{\A^1}$ is the characteristic function of  $\A^1$.

Given a function $f\in V_{St}$, the projection to the space of
$MU'$-invariants is given by, 
\[ Pr_{MU'} (f)=f(\infty)\delta_{\infty}-\frac{(\sum_{x\in \A^1}f(x))}{p}\delta_{\A^1}.\]
\end{lemma}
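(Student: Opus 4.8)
The plan is to identify, for each of the two models, a single function in the underlying space and check directly that it is fixed by $MU'$; since Lemma \ref{lemma:MU'zero} already guarantees the invariant subspace is at most one-dimensional, exhibiting one nonzero invariant suffices, and the projection formula then follows because $Pr_{MU'}$ is a projection onto that line with the scalar pinned down by evaluating at a convenient coset representative. The main obstacle is purely bookkeeping: one must be careful about which side the group acts on in the induced-representation model \eqref{eqn:model} and how the coset representatives $U'w$ and $e$ interact with $MU'\subset P'$.

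For part (a), I would first verify that $\delta_{in}$ lies in $I_{P'}^G(\chi\otimes 1_M)$ by checking the transformation rule $f(gp')=\theta(p')^{-1}f(g)$: on $P'$ itself, writing $g=x_l x_m x_{u'}$ and $p'=x_{l'}x_{m'}x_{u''}$, one multiplies inside $P'$, reads off the new $L$-component as $ll'$ (using that $x_l$ and $x_m$ commute and that $M,U'$ contribute trivially to the character $\chi\otimes 1_M$), and confirms $\delta_{in}(gp')=\chi(ll')^{-1}=\chi(l')^{-1}\delta_{in}(g)=\theta(p')^{-1}\delta_{in}(g)$; outside $P'$ both sides vanish since $P'$ is a subgroup. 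Next, $MU'$-invariance: for $h\in MU'\subset P'$, $(\rho(h)\delta_{in})(g)=\delta_{in}(h^{-1}g)$, and since the character $\chi\otimes 1_M$ is trivial on $MU'$ one checks this equals $\delta_{in}(g)$ for $g\in P'$ (left multiplication by $h^{-1}$ stays in $P'$ and does not change the $L$-component modulo the trivial $M$- and $U'$-parts — here I would just expand the matrix product $x_{m}^{-1}x_{u'}^{-1}\cdot x_l x_m x_{u'}$ and observe the top-left entry is unchanged up to the $\chi$-triviality on $M$), and both sides vanish for $g\notin P'$. Finally the projection formula: since $Pr_{MU'}$ has image the line $\C\delta_{in}$ and is the identity on it, $Pr_{MU'}(f)=c(f)\delta_{in}$ for a linear functional $c$; evaluating at $g=e$ and using $\delta_{in}(e)=\chi(1)^{-1}=1$ together with $(\rho(x)f)(e)=f(x^{-1}e)=f(e)$ for $x\in MU'$ (as $x^{-1}\in P'$ and $f$ is $P'$-equivariant with $\theta$ trivial on $MU'$), one gets $Pr_{MU'}(f)(e)=f(e)$, hence $c(f)=f(e)$.

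For part (b), the analogous computation is carried out in the permutation model \eqref{Stmodel}, where $V_{St}$ consists of functions on ${\mathbbm P}^1$ summing to zero. Using the description recorded just above the lemma — $P'$ is the stabilizer of $\infty$, $U'$ acts simply transitively on $\A^1=\mathbb{P}^1\setminus\{\infty\}$, and $M$ acts on $\A^1$ by scaling — one checks that $MU'$ fixes the function $\delta_{\infty}-\tfrac1p\delta_{\A^1}$: it fixes $\infty$, permutes the points of $\A^1$ among themselves (so $\delta_{\A^1}$ is invariant), and $\delta_{\infty}-\tfrac1p\delta_{\A^1}$ indeed has total sum $1-\tfrac1p\cdot p=0$, so it lies in $V_{St}$. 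Uniqueness again comes from Lemma \ref{lemma:MU'zero}. For the projection formula, decompose an arbitrary $f\in V_{St}$ into its value at $\infty$ and its restriction to $\A^1$; averaging over $U'$ replaces the $\A^1$-part by its mean value $\tfrac1p\sum_{x\in\A^1}f(x)$ times $\delta_{\A^1}$ while leaving $f(\infty)\delta_\infty$ alone (since $U'$ fixes $\infty$), and averaging further over $M$ changes nothing as that part is already $U'$-invariant; this yields $Pr_{MU'}(f)=f(\infty)\delta_{\infty}-\tfrac{1}{p}\bigl(\sum_{x\in\A^1}f(x)\bigr)\delta_{\A^1}$ after noting $\sum_{x\in\A^1}f(x)=-f(\infty)$ so the result still lies in $V_{St}$ and is manifestly a multiple of $\delta_{in}$ plus the correct scalar.
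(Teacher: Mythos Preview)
Your proposal is correct and for the invariance checks and for part (b) it mirrors the paper's argument (which in (b) simply says the computation is immediate from the permutation action on $\mathbb{P}^1$). The one genuine difference is in how you derive the projection formula \eqref{eqn:projps} in part (a): the paper proves it by decomposing $f$ along the Bruhat decomposition $G=U'wP'\sqcup P'$ and explicitly computing that the $MU'$-average of any function supported on the big cell $U'wP'$ vanishes at each point $x_{v'}w$, a calculation that uses the non-triviality of $\chi$ through $\sum_m \chi(m)=0$. You instead shortcut this by invoking Lemma~\ref{lemma:MU'zero} to know a priori that $Pr_{MU'}(f)=c(f)\delta_{in}$ for some scalar, and then pin down $c(f)=f(e)$ by evaluating at the identity and using that $\theta$ is trivial on $MU'$. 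Your route is cleaner and avoids the explicit big-cell computation; the paper's route is more self-contained in that it does not lean on the multiplicity-one statement of Lemma~\ref{lemma:MU'zero} at this step. One small slip: in part (b) the direct averaging you describe actually produces $f(\infty)\delta_\infty+\frac{1}{p}\bigl(\sum_{x\in\A^1}f(x)\bigr)\delta_{\A^1}$ with a plus sign, which then equals $f(\infty)\delta_{in}$ upon using $\sum_{x\in\A^1}f(x)=-f(\infty)$; the minus sign in the displayed formula of the lemma appears to be a typo in the paper, and you have carried it over.
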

In other words, essentially the lemma says that the invariant element
is the element in the induced model `supported' at infinity, where for
the Steinberg we need to take the projection to the Steinberg of the
function supported at infinity. 

\begin{proof}

(a)  Since $MU'$ respects the Bruhat decomposition $G=U'wP'\sqcup  P'$
it follows that  $\rho(x_mx_{u'})\delta_{in}$ is supported at
the coset $P'$.  From the definition of $\delta_{in}$, 
\[ (\rho(x_mx_{u'})\delta_{in})(e)=\delta_{in}(x_{u'}^{-1}x_m^{-1})
=\delta_{in}(x_m^{-1}x_{u'/m})=1.\]
This proves the invariance of $\delta_{in}$ under the action of
$MU'$. 

To prove the formula for the projection operator, it is sufficient to
show that for any function $f$ supported in the finite part $U'wP'$ of
$G$, the projection is zero. Given an element $x_{v'}\in U'$, 
\begin{align*}
\sum_{m,u'}\rho(x_mx_{u'})(f)(x_{v'}w)&=\sum_{m,u'}f(x_{u'}^{-1}x_m^{-1}x_{v'}w)
=\sum_{m,u'}f(x_{u'}^{-1}x_{mv'}x_m^{-1}w)\\
&=\sum_{m,u'}f(x_{mv'-u'}wwx_m^{-1}w)\\
&=\sum_{m,u'}\chi(m)f(x_{mv'-u'}w)=0.
\end{align*}
(b) For the Steinberg, the calculation is immediate given that it is a
permutation action of $G$ on $G/P'={\mathbbm P}^1$. 

\end{proof}

\subsection{A formula for the  trace}
Equations (\ref{eqn:proj1}, .., \ref{eqn:proj4}) express the 
partial Gauss sums $G_1$
and $G_2$ as operators of the form $TQ$, where $Q^2=Q$ is a projection
operator. For such operators, the trace of $TQ$ is computed by
restricting the action of $T$ to the image of $Q$: 
\begin{lemma}\label{lem:traceprojection}
Suppose $V$ is a finite dimensional vector space and $T, ~Q\in
\mbox{End}(V)$, where $Q^2=Q$. Then 
\[ Tr(TQ)=Tr(QTQ).\]
\end{lemma}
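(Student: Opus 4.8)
The plan is to exploit the two facts at our disposal: the cyclic invariance of the trace on $\mbox{End}(V)$, which holds because $V$ is finite dimensional, and the idempotency $Q^2=Q$. First I would start from the right-hand side, regroup the triple product as $(QT)\cdot Q$, and apply cyclicity:
\[
\mbox{Tr}(QTQ)=\mbox{Tr}\bigl(Q\cdot QT\bigr)=\mbox{Tr}(Q^2T)=\mbox{Tr}(QT),
\]
where the last step uses $Q^2=Q$. A second application of cyclicity, this time moving $Q$ past $T$, gives $\mbox{Tr}(QT)=\mbox{Tr}(TQ)$, which is exactly the asserted identity. That is the whole argument.

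For concreteness — and because this is the form in which the lemma is used to evaluate the traces of the operators $TQ$ appearing in \eqref{eqn:proj1}--\eqref{eqn:proj4} — I would also record the bookkeeping version. Since $Q$ is a projection we have a direct sum decomposition $V=\mathrm{im}(Q)\oplus\ker(Q)$, with respect to which $Q=\bigl(\begin{smallmatrix}I&0\\0&0\end{smallmatrix}\bigr)$. Writing $T=\bigl(\begin{smallmatrix}A&B\\C&D\end{smallmatrix}\bigr)$ in the same block form, one computes $TQ=\bigl(\begin{smallmatrix}A&0\\C&0\end{smallmatrix}\bigr)$ and $QTQ=\bigl(\begin{smallmatrix}A&0\\0&0\end{smallmatrix}\bigr)$; both have trace $\mbox{Tr}(A)$, the trace of the compression of $T$ to $\mathrm{im}(Q)$. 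Hence the two traces coincide, and moreover the common value is visibly $\mbox{Tr}$ of the operator obtained by restricting $T$ to the image of the projection $Q$.

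There is essentially no obstacle here: the statement is a formal identity of linear algebra, and the only point meriting a word of care is that cyclicity of the trace is invoked for operators on a finite-dimensional space, which is the standing hypothesis of the lemma. The payoff for the sequel is that, to compute $\mbox{Tr}(G_i(\rho,\cdot))$, it now suffices to understand the action of the relevant operator $T$ on the image of $Pr_{MU'}$ (respectively $Pr_{P'}$), that is, on the at most one-dimensional invariant subspaces identified in Lemma \ref{lemma:MU'zero} and described explicitly in Lemma \ref{lem:invariantelt}.
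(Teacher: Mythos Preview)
Your argument is correct and essentially identical to the paper's one-line proof, which also uses cyclicity of trace together with $Q^2=Q$ (writing $\mbox{Tr}(QTQ)=\mbox{Tr}(TQQ)=\mbox{Tr}(TQ)$). The additional block-matrix computation you include is a helpful gloss but not needed for the lemma itself.
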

\begin{proof} 
\[Tr(QTQ)=Tr(TQQ)=Tr(TQ). \]
 \end{proof}
We apply this lemma in the context of Lemma \ref{lem:invariantelt} and
the projection operator $Pr_{MU'}$:
\begin{corollary}\label{cor:trace}
With notation as in  Lemma \ref{lem:invariantelt}, let $T$ be an
operator on the space underlying the representation $\rho$. Then,
\[ \tr(T Pr_{MU'})=T(\delta_{in})(e),\]
where $\rho$  is as in Part (a) of  Lemma \ref{lem:invariantelt}.

When $\rho$ is the Steinberg representation, 
 \[ \tr(T Pr_{MU'})=T(\delta_{in})(\infty).\]
\end{corollary}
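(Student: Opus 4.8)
The plan is to combine Lemma \ref{lem:traceprojection} with the structural fact, supplied by Lemma \ref{lemma:MU'zero} and Lemma \ref{lem:invariantelt}, that $Pr_{MU'}$ is a rank-one idempotent whose image is the line $\C\,\delta_{in}$. Setting $Q=Pr_{MU'}$, we have $Q^2=Q$, so Lemma \ref{lem:traceprojection} gives $\tr(TQ)=\tr(QTQ)$. Now $QTQ$ kills $\ker Q$ and maps all of $V_\rho$ into $\mathrm{im}(Q)=\C\,\delta_{in}$; in a basis of $V_\rho$ consisting of $\delta_{in}$ together with a basis of $\ker Q$ (which is a complement to $\C\,\delta_{in}$ since $Q\delta_{in}=\delta_{in}\neq 0$), the matrix of $QTQ$ is $\mathrm{diag}(\mu,0,\dots,0)$ where $QTQ(\delta_{in})=\mu\,\delta_{in}$. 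Hence $\tr(T\,Pr_{MU'})=\mu$, i.e. $\tr(T\,Pr_{MU'})$ equals the coefficient of $\delta_{in}$ in $QTQ(\delta_{in})$.

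It then remains to extract that coefficient from the explicit projection formula. Since $\delta_{in}$ is $MU'$-invariant, $Q\delta_{in}=\delta_{in}$, so $QTQ(\delta_{in})=Q\big(T(\delta_{in})\big)$. In case (a) of Lemma \ref{lem:invariantelt}, formula \eqref{eqn:projps} states $Q(f)=f(e)\,\delta_{in}$ for every $f$ in the space underlying $\rho=I_{P'}^G(\chi\otimes 1_M)$; applying it with $f=T(\delta_{in})$ yields $QTQ(\delta_{in})=\big(T(\delta_{in})\big)(e)\,\delta_{in}$, whence $\tr(T\,Pr_{MU'})=T(\delta_{in})(e)$, as claimed.

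For the Steinberg representation the same argument applies verbatim once one rewrites the projection formula of Lemma \ref{lem:invariantelt}(b) for $f\in V_{St}$: using $\sum_{x\in\mathbbm{P}^1}f(x)=0$, hence $\sum_{x\in\A^1}f(x)=-f(\infty)$, that formula collapses to a scalar multiple of $\delta_{in}=\delta_{\infty}-\tfrac1p\delta_{\A^1}$, namely $Q(f)=f(\infty)\,\delta_{in}$; taking $f=T(\delta_{in})$ gives $\tr(T\,Pr_{MU'})=T(\delta_{in})(\infty)$. The only point needing genuine care — and the step I would write out in full — is precisely this last reduction in the Steinberg case: one must verify that the displayed projection formula really does land in the line $\C\,\delta_{in}$ and pin down the correct scalar, since here $\delta_{in}$ is not the indicator of $\{\infty\}$ but its image under the projection of functions on $\mathbbm{P}^1$ onto $V_{St}$. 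Everything else is the elementary observation that composing an operator with a rank-one idempotent and taking the trace returns the relevant matrix coefficient.
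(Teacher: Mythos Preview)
Your proof is correct and follows the same line as the paper's: both reduce to Lemma~\ref{lem:traceprojection} and then read off the scalar using that $Pr_{MU'}$ is a rank-one idempotent with image $\C\delta_{in}$. Your explicit verification in the Steinberg case---rewriting the projection formula of Lemma~\ref{lem:invariantelt}(b) as $Q(f)=f(\infty)\,\delta_{in}$ via the constraint $\sum_{x\in\mathbbm{P}^1}f(x)=0$---is a helpful expansion of what the paper leaves as a one-line observation.
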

\begin{proof} The
projection operator $Pr_{MU'}$ projects onto the one dimensional space
of invariants spanned by $\delta_{in}$. Thus the trace is equal to the
multiple of $\delta_{in}$ in  $Pr_{MU'} T Pr_{MU'}(\delta_{in})$. 

For the Steinberg, we observe that this multiple is as given in the
equation. 
\end{proof}
 
\subsection{Proof of Theorem \ref{theorem:sgs}} We now apply Corollary
\ref{cor:trace}, to compute
the traces of the singular Gauss sums for the principal series 
representations and Steinberg. 

\subsubsection{Irreducible principal series: semisimple case}
Suppose $\rho$ is an irreducible principal series  representation 
$I_{\chi,1}$ with $\chi$ a non-trivial character of $\F_p^*$. 
\begin{align*}
 \tr(G(\rho, A_a))&=\tr(G_1(\rho, A_a))=
|MU'|\sum_{u\in \F_p, l\in \F_p^*}e(al/p) \rho(x_ux_l)(\delta_{in})(e)\\
&=|MU'|\sum_{u\in \F_p,l\in \F_p^*}e(al/p)
  \delta_{in}(x_l^{-1}x_u^{-1}).
\end{align*}
Since $U$ acts simply transitively on ${\mathbbm P}^1\backslash \{0\}$, 
only the term $u=0$ corresponds to the point at infinity and 
contributes to the trace. Hence, 
\begin{align*}
 \tr(G(\rho, A_a))&=|MU'|\sum_{l\in \F_p^*}e(al/p) \chi(l)\\
&=p(p-1)\overline{\chi(a)}G(\chi).
\end{align*}

\subsubsection{Irreducible principal series: nilpotent case}
We now consider the case of irreducible principal series and
nilpotent conjugacy class $N$.  We calculate first the $G_1$ term: 
\begin{align*}
 \tr(G_1(\rho, N))&=|MU'|\sum_{u\in \F_p, l\in \F_p^*}e(ul/p) 
(\rho(x_ux_l)(\delta_{in}))(e)\\
&=|MU'|\sum_{u\in \F_p,l\in \F_p^*}e(ul/p)
\delta_{in}(x_l^{-1}x_u^{-1}).
\end{align*}
Again, only the $u=0$ contributes to the trace. The sum becomes, 
\[\tr(G_1(\rho, N))=|MU'|\sum_{l\in \F_p^*} \chi(l)=0.\]

Similarly, the $G_2$-term can be calculated:
\begin{align*}
 \tr(G_2(\rho, N))&=|MU'|\sum_{l\in
                            \F_p^*}e(l/p)\rho(x_l)(\delta_{in})(e)
=|MU'|\sum_{l\in \F_p^*}e(l/p)\delta_{in}(x_l^{-1})\\
&=|MU'|\sum_{l\in \F_p^*}e(l/p)\chi(l)=p(p-1)G(\chi).
\end{align*}
Hence, 
\begin{align*}
 \tr(G(\rho, N))&=\tr(G_1(\rho,N))+\tr(G_2(\rho,
                          N))\\
&=p(p-1)G(\chi).
\end{align*}

\subsubsection{Steinberg: semisimple case}
We consider now the Steinberg representation. By Corollary \ref{cor:trace},
\begin{equation*}
 \tr(G_1(St, A_a))=|MU'|\sum_{u\in \F_p, l\in \F_p^*}e(al/p) 
(\rho(x_ux_l)(\delta_{\infty}-\frac{1}{p}\delta_{\A^1})(\infty)\\.
\end{equation*}
The group $U$ fixes $0$ of ${\mathbbm P}^1$ and acts by translations on
${\mathbbm P}^1\backslash \{0\}$. Hence for the $\delta_{\infty}$ term, 
only $u=0$ contributes non-trivially.  Hence, 
\[
|MU'|\sum_{u\in \F_p, l\in \F_p^*}e(al/p) 
(\rho(x_ux_l)(\delta_{\infty})(\infty)=|MU'|\sum_{l\in \F_p^*}e(al/p)=-p(p-1).\]
Similarly, for the
$\delta_{A^1}$-term, the contribution comes from non-zero $u$.
Taking infinity to be given by the column vector 
$\begin{pmatrix} 1 \\ 0\end{pmatrix}$, the calculation becomes, 
\begin{align*}
|MU'|&\sum_{u\in \F_p, l\in \F_p^*}e(al/p) 
(\rho(x_ux_l)\frac{1}{p}\delta_{A^1}(\infty)=|M|\sum_{u, l\in
  \F_p^*}e(al/p) \delta_{\A^1}\left(\begin{pmatrix} l^{-1} \\
  -u\end{pmatrix}\right)\\
&=|M|\sum_{u, l\in \F_p^*}e(al/p)=-(p-1)^2.
\end{align*}
Hence, 
\[\tr(G_1(St, A_a))=-p(p-1)+(p-1)^2=-(p-1).\]

By Eq. (\ref{eqn:proj2}), the second sum becomes, 
\[
\tr(G_2(St, A_a))=|P'|\rho(w)\delta_{in}(\infty)
=|P'|\delta_{in}(0)=-\frac{|P'|}{p}\delta_{\A^1}(0)
=-(p-1)^2.
\]

Hence 
\begin{align*} 
\tr(G(St, A_a))&= \tr(G_1(St, A_a)) +\tr(G_2(St, A_a))\\
&=-(p-1)-(p-1)^2\\
&=-p(p-1).
\end{align*}

\subsubsection{Steinberg: nilpotent case}
When the conjugacy class of $A$ is nilpotent, we argue as above in the
semisimple case, considering the sum over $u=0$ and $u$ non-zero
separately. From Eq. (\ref{eqn:proj3}) and Corollary
\ref{cor:trace}, $\tr(G_1(St, N))$ is equal to
\begin{align*}|MU'|\sum_{l\in \F_p^*} 
&(\rho(x_l)(\delta_{\infty})(\infty)-
|M|\sum_{u, l\in \F_p^*}e(ul/p)
  (\rho(x_ux_l)(\delta_{\A^1})(\infty)\\
&=|LMU'|-|M|\sum_{u, l\in \F_p^*} e(ul/p)=p(p-1)^2+(p-1)^2\\
&=(p+1)(p-1)^2.
\end{align*}
From Eq. (\ref{eqn:proj4}), the second sum becomes, 
\begin{align*}\tr(G_2(St, N))
&=|MU'|\rho(w)\sum_{l\in \F_p^*}e(l/p)\rho(x_l)\delta_{in}(\infty)\\
&=|MU'|\sum_{l\in\F_p^*}e(l/p)\rho(x_l)\delta_{in}(0)\\
&=-\frac{|MU'|}{p}\sum_{l\in\F_p^*}e(l/p)\rho(x_l)\delta_{\A^1}(0)\\
&=-|M|\sum_{l\in \F_p^*}e(l/p)\\
&=(p-1).
\end{align*}

Hence 
\begin{align*} 
\tr(G(St, N))&= \tr(G_1(St, N)) +\tr(G_2(St, N))\\
&=(p-1)+(p-1)^2(p+1)\\
&=p^2(p-1).
\end{align*}

This proves Theorem \ref{theorem:sgs}. 

\begin{remark}
  It will be interesting to figure out the nature of
  these singular
traces for general $GL(n,\F_p)$. To try to make sense of these values
in terms of the parametrization of the representations 
by the conjugacy classes, we make two definitions:
\begin{definition}
An irreducible representation $\rho$ of $GL(2,\F_p)$ to be of
{\em unit class} if the semisimple part of the conjugacy class
parametrizing it has $1$ as an eigenvalue.
\end{definition}
 
\begin{definition}
The {\em unit multiplicity} $k(\rho)$ of an irreducible representation
$\rho$ is defined to be the  
multiplicity of the eigenvalue $1$ in the 
semisimple part of the conjugacy class
parametrizing it.
\end{definition}
\noindent
The unit multiplicity appears as a `defect' term in 
Kondo's estimate for the non-abelian Gauss sum:  
\[ |g(\rho)| =p^{(n^2-k(\rho))/2}.\]
From the classification given by Theorem \ref{theorem:sgs}, 
we see that the non-trivial unit
class representations of $GL(2,\F_p)$ is isomorphic to either the
trivial or Steinberg or to the prinicipal series
representation $I_{\chi, 1}$ for some non-trivial character $\chi$ of
$\F_p^*$.  
These are precisely the representations that occur in the 
induced representation $I_{MU'}^G(1)$. 
Theorem \ref{theorem:sgs}
says that the singular Gauss sums does not vanish precisely for the 
representations of unit class. 

\end{remark}

\section{Proof of the $GL(2)$ Polya-Vinogradov theorem}

As we have already obtained the bound for Gauss sums, what remains in order to prove Theorem \ref{secondmain} 
is the Fourier analytic part which we develop here.  First we recall some basic facts from Fourier Analysis on finite abelian groups and then we
proceed as in the standard proofs of the classical Polya-Vinogradov Theorem. We consider the case of general $n\times n$ matrices until the 
point when we need to apply the Gauss sum bound. 
\subsection{Fourier analysis on finite groups}
Let $G$ be a finite group. Let $\hat{G}$ denote the set
of isomorphism classes of irreducible complex representations of $G$.  
For $\rho\in \hat{G}$, let $\chi_{\rho}$ denote its character. 
The space of complex valued
functions on $G$ carries an inner product, 
\[ \la f_1, f_2\ra =\frac{1}{|G|}\sum_{x\in
  G}f_1(x)\overline{f_2(x)}, \]
where $f_1, f_1$ are complex valued functions on $G$, and $|G|$
denotes the cardinality of $G$.  With respect to this inner product,
the characters of $G$ form an orthonormal basis for the conjugation invariant functions
on $G$. On the space of functions on
$\hat{G}$, define the inner product
\[ \la \phi_1, \phi_2\ra =\sum_{\rho\in
  \hat{G}}\phi_1(\rho)\overline{\phi_2(\rho)}, \]
where $\phi_1, \phi_1$ are complex valued functions on $\hat{G}$.
For a conjugacy invariant
function $f$ on $G$, its Fourier transform $\hat{f}$ is a function on
$\hat{G}$, defined by $\hat{f}(\rho)=\la f, \chi_{\rho}\ra$. With
these normalizations, the Fourier transform $f\mapsto \hat{f}$ is an
isometry from conjugacy invariant functions on $G$ to functions on
$\hat{G}$.

\subsection{Dual of $M(n, \Z/p\Z)$}
We specialize the foregoing discussion to the case when $G= M(n,
\Z/p\Z)$, where $p$ is a prime number. Denote by $e$ the exponential function
$e(x)= \mbox{exp}(2\pi ix), ~x\in \C$. From the identification of the finite
field $\F_p$ with $\Z/p\Z$, we have an additive character $e_p$  of $\F_p$
given by  $e_p(x)=e(x/p)$.  
Let $n$ be  a positive integer.
For each matrix $A\in M(n, \Z/p\Z)$, consider the character
$\psi_{A}(X)=e_p(\tr(AX))$. We have, 
\begin{lemma} The map $A\mapsto \psi_A$ yields an isomorphism of  $M(n, \Z/p\Z)$
with its dual group $\reallywidehat{M(n, \Z/p\Z)}$.
\end{lemma}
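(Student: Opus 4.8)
The plan is to exhibit $A \mapsto \psi_A$ as an injective homomorphism between two finite abelian groups of the same order, whence it is automatically an isomorphism; the only substantive input is the non-degeneracy of the trace pairing $(A,X) \mapsto \tr(AX)$ on $M(n,\Z/p\Z)$, everything else being formal.

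First I would verify that each $\psi_A$ is indeed a character of the additive group $(M(n,\Z/p\Z),+)$: since $X \mapsto \tr(AX)$ is additive and $e_p$ is a homomorphism from $(\Z/p\Z,+)$ to $\C^{\times}$, one has $\psi_A(X+Y)=\psi_A(X)\psi_A(Y)$. Using instead the bilinearity of the trace in the pair $(A,X)$, one gets $\psi_{A+B}(X)=e_p\bigl(\tr(AX)+\tr(BX)\bigr)=\psi_A(X)\psi_B(X)$, so $A \mapsto \psi_A$ is a homomorphism from $(M(n,\Z/p\Z),+)$ into the dual group $\reallywidehat{M(n, \Z/p\Z)}$.

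Next I would establish injectivity. Suppose $\psi_A$ is the trivial character, i.e.\ $\tr(AX)\equiv 0 \pmod p$ for every $X \in M(n,\Z/p\Z)$. Testing against the elementary matrix $E_{ji}$ having a $1$ in the $(j,i)$ entry and $0$ elsewhere gives $\tr(AE_{ji})=a_{ij}=0$ in $\Z/p\Z$ for all $i,j$, hence $A=0$; so the kernel of $A \mapsto \psi_A$ is trivial.

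Finally, since $M(n,\Z/p\Z)$ is isomorphic as a group to a product of $n^2$ copies of $\Z/p\Z$, both it and its dual have cardinality $p^{n^2}$, and an injective homomorphism between finite groups of equal order is bijective; therefore $A\mapsto\psi_A$ is an isomorphism. I do not expect any genuine obstacle here: the whole content is the non-degeneracy of the trace form, which is precisely what makes $(A,X)\mapsto e_p(\tr(AX))$ a perfect pairing on $M(n,\Z/p\Z)$.
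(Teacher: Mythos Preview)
Your proof is correct and follows essentially the same approach as the paper: establish injectivity of $A\mapsto\psi_A$ via the non-degeneracy of the trace pairing, then conclude by comparing cardinalities. You simply supply more detail (verifying the map is a homomorphism and specifying the test matrices $E_{ji}$) where the paper merely asserts that for each nonzero $A$ there exists $X$ with $\tr(AX)\neq 0$.
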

\begin{proof} Since for each non-zero matrix $A\in M(n, \Z/p\Z)$, there exists a
  matrix $X$ with $\tr(AX)\neq 0$, the map   $A\mapsto
  \psi_A$ is injective. Hence the lemma follows by comparing
  the cardinalities. 
\end{proof}

For functions $\phi, \phi':M(n, \Z/p\Z) \longrightarrow \C$, the isometry of
Fourier transform translates to the following Plancherel formula:
\begin{equation}\label{FTiso}
\frac{1}{p^{n^2}}\sum_{A\in
  M(n,\Z/p\Z)}\phi(A)\overline{\phi'(A)}=\sum_{A \in M(n,\Z/p\Z)}
\widehat{\phi}(A)\overline{\widehat{\phi'}(A)}.
\end{equation}

\subsection{A general estimate for box sums} Let $\mathbf{I}$ be an
$n^2$-dimentional matrix interval in $M(n, \Z)$;  i.e., $\mathbf{I}$ is the
Cartesian product of $n^2$ many intervals $I_{ij}, 1\leq i, j \leq n$ for each entry,
where each $I_{ij}$ is an interval in $\Z$.  We may assume
without loss of generality that the length of the  interval $I$ is at
most $p$. Let  $\phi$ be  a complex valued function on $M(n, \Z/p\Z)
$. The following proposition gives an estimate of the general sum 
\[S(\phi, \mathbf{I}) = \sum_{ A \in \mathbf{I}} \phi(A). \]

\begin{proposition}\label{sphi}
Suppose each component interval $I_{ij}$ has length $|I_{ij}|\leq cp$,
where $c>0$ is a constant. Then we have the estimate
\[
 S(\phi, \mathbf{I}) \ll ||\hat{\phi}||_{\infty}p^{n^2}(\log p)^{n^2}
\]
Moreover, for $p\geq 11$, the implied constant can be taken to be ${\left(\frac{c+3}{2}\right)}^{n^2}$. 
\end{proposition}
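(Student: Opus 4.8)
The plan is to run the classical proof of the Polya--Vinogradov inequality on the self-dual abelian group $M(n,\Z/p\Z)$. First I would Fourier-expand $\phi$: by the preceding lemma, $A\mapsto\psi_A$ identifies $M(n,\Z/p\Z)$ with its character group, so Fourier inversion gives, for every $X\in M(n,\Z/p\Z)$,
\[
\phi(X)=\sum_{B\in M(n,\Z/p\Z)}\widehat\phi(B)\,e_p(\tr(BX)),
\]
where I write $\widehat\phi(B)$ for $\widehat\phi(\psi_B)$. Substituting this into $S(\phi,\mathbf I)=\sum_{A\in\mathbf I}\phi(\overline A)$, interchanging the two finite sums, and applying the triangle inequality reduces everything to the uniform bound
\[
|S(\phi,\mathbf I)|\le\|\widehat\phi\|_\infty\sum_{B\in M(n,\Z/p\Z)}\Bigl|\,\sum_{A\in\mathbf I}e_p(\tr(BA))\,\Bigr|,
\]
so it suffices to show the total mass $\sum_B\bigl|\sum_{A\in\mathbf I}e_p(\tr(BA))\bigr|$ is at most $\bigl(\frac{c+3}{2}\bigr)^{n^2}p^{n^2}(\log p)^{n^2}$.

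The key structural observation is that this total mass factors over the matrix entries. Writing $\tr(BA)=\sum_{i,j}b_{ji}a_{ij}$ and using $\mathbf I=\prod_{i,j}I_{ij}$, the inner character sum splits as $\sum_{A\in\mathbf I}e_p(\tr(BA))=\prod_{1\le i,j\le n}\sum_{a\in I_{ij}}e_p(b_{ji}a)$; since the entries of $B$ range independently over $\Z/p\Z$ and $B\mapsto B^{\top}$ is a bijection of $M(n,\Z/p\Z)$, summing the absolute value over all $B$ yields
\[
\sum_{B}\Bigl|\,\sum_{A\in\mathbf I}e_p(\tr(BA))\,\Bigr|=\prod_{1\le i,j\le n}\Bigl(\,\sum_{b\in\Z/p\Z}\Bigl|\sum_{a\in I_{ij}}e_p(ba)\Bigr|\,\Bigr).
\]
Thus it is enough to prove that for any interval $I\subset\Z$ with $|I|\le cp$ one has $\sum_{b\in\Z/p\Z}\bigl|\sum_{a\in I}e_p(ba)\bigr|\le\frac{c+3}{2}\,p\log p$ once $p\ge 11$, and then multiply the $n^2$ factors.

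For one such factor I would treat $b=0$ and $b\neq0$ separately. The $b=0$ term equals $|I|\le cp$. For $b\neq0$, summing the geometric series gives the standard bound $\bigl|\sum_{a\in I}e_p(ba)\bigr|\le\frac1{|\sin(\pi b/p)|}\le\frac1{2\|b/p\|}$, where $\|y\|$ denotes the distance of $y$ to the nearest integer; note this is valid regardless of the length of $I$, so no preliminary reduction to $|I|\le p$ is needed. The classical harmonic-sum evaluation then gives $\sum_{b=1}^{p-1}\frac1{2\|b/p\|}=p\sum_{k=1}^{(p-1)/2}\frac1k\le p\bigl(1+\log\frac{p-1}{2}\bigr)$, so each factor is at most $p\bigl(c+1+\log\frac{p-1}{2}\bigr)$; this already yields $S(\phi,\mathbf I)\ll\|\widehat\phi\|_\infty\,p^{n^2}(\log p)^{n^2}$ with an absolute implied constant.

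The only genuinely quantitative step — and the sole place the hypothesis $p\ge 11$ is used — is the elementary inequality $c+1+\log\frac{p-1}{2}\le\frac{c+3}{2}\log p$. I would verify it by observing that the difference $\frac{c+3}{2}\log p-c-1-\log\frac{p-1}{2}$ has positive derivative in $p$ (equal to $\frac{c+3}{2p}-\frac1{p-1}>0$ for $p>3$) and positive derivative in $c$ for $p\ge 11$ (equal to $\frac12\log p-1>0$ since $\log 11>2$), so over the region $\{c>0,\ p\ge 11\}$ its infimum is approached as $c\to0^{+}$, $p=11$, where it is $\frac32\log 11-1-\log 5>0$. Feeding the factor bound $p\bigl(\frac{c+3}{2}\log p\bigr)$ into the product over the $n^2$ entries then produces exactly the stated constant $\bigl(\frac{c+3}{2}\bigr)^{n^2}$. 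There is no conceptual obstacle here; the only delicate point is bookkeeping — carrying the character-sum factorization and the harmonic-sum constant through \emph{explicitly}, rather than absorbing them into $O(\cdot)$, so that the claimed value of the implied constant actually emerges.
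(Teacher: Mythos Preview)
Your proof is correct and follows essentially the same route as the paper: Fourier-expand on the self-dual group $M(n,\Z/p\Z)$, factor the resulting character sum $\sum_{A\in\mathbf I}e_p(\tr(BA))$ over the $n^2$ matrix entries, bound each factor by the geometric-series/harmonic-sum estimate, and track the constant to get $\bigl(\frac{c+3}{2}\bigr)^{n^2}$ for $p\ge 11$. The only cosmetic differences are that the paper phrases the first step via Plancherel against $\delta_{\bar{\mathbf I}}$ (after reducing to $|I_{ij}|\le p$) rather than direct Fourier inversion, and is slightly looser in the harmonic-sum bookkeeping; your treatment avoids the preliminary reduction and keeps the factor $\tfrac12$ from the geometric-series bound, but the argument is the same.
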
 

By Eq. \eqref{FTiso}, we have
\begin{equation}\label{Parseval}
 p^{-n^2}S(\phi, \mathbf{I}) =\sum_{B \in M(n, \F_p)} \hat{\phi} (B) 
\overline{\widehat{\delta_{\bar{ \mathbf{I}}}}}(B),
\end{equation}
which yields the bound
\begin{equation}\label{Sophi}
|S(\phi, \mathbf{I})| \leq p^{n^2}||\hat{\phi}||_{\infty} \sum_{B \in M(n, \F_p)} 
\left|\hat{\delta}_{\bar{\mathbf{I}}}(B)\right|.
\end{equation}
Hence we need to bound the above sum over $B$ and this will be done in the next few lemmas.

\begin{lemma}
For any real number $\alpha$, we have the bound
\begin{equation}\label{linearsum}
    \sum_{1\leq n\leq N}e(n\alpha) \leq \mbox{min}\left( N, \frac{1}{2||\alpha||}\right),
\end{equation}
where $||\alpha||$ is the distance of $\alpha$ from the
nearest integer.
\end{lemma}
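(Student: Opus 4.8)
$\sum_{1 \le n \le N} e(n\alpha) \le \min(N, \frac{1}{2\|\alpha\|})$.

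The first bound is immediate from the triangle inequality: each term has modulus $1$, so the sum is at most $N$ in absolute value. For the second bound, the plan is the standard geometric-series argument. If $\alpha \in \Z$ then the sum equals $N$, but also $\|\alpha\| = 0$ so the bound $\tfrac{1}{2\|\alpha\|}$ is vacuously $+\infty$; hence assume $\alpha \notin \Z$, so $e(\alpha) \ne 1$ and we may sum the finite geometric progression.

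Writing the geometric sum explicitly, one has
\[
\left| \sum_{1 \le n \le N} e(n\alpha) \right| = \left| e(\alpha) \cdot \frac{e(N\alpha) - 1}{e(\alpha) - 1} \right| = \frac{|e(N\alpha) - 1|}{|e(\alpha) - 1|} \le \frac{2}{|e(\alpha) - 1|},
\]
using $|e(N\alpha) - 1| \le 2$ and $|e(\alpha)| = 1$ in the numerator. The remaining step is the elementary estimate $|e(\alpha) - 1| = |e^{2\pi i \alpha} - 1| = 2|\sin(\pi\alpha)| \ge 4\|\alpha\|$, which follows from the inequality $|\sin(\pi t)| \ge 2\|t\|$ for all real $t$ (equivalently $\sin(\pi u) \ge 2u$ for $0 \le u \le \tfrac12$, a consequence of concavity of $\sin$ on $[0,\pi/2]$ together with periodicity and the odd symmetry $\|{-t}\| = \|t\|$). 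Combining, the sum is at most $\tfrac{2}{4\|\alpha\|} = \tfrac{1}{2\|\alpha\|}$, as desired.

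There is no real obstacle here; the only point requiring a moment's care is the trigonometric inequality $2|\sin(\pi\alpha)| \ge 4\|\alpha\|$, which one reduces to $\alpha \in [0, \tfrac12]$ by periodicity (both $e(\cdot)$ and $\|\cdot\|$ are $1$-periodic) and to $\alpha \in [0, \tfrac12]$ from $[0,1)$ by the reflection $\alpha \mapsto 1 - \alpha$. On $[0, \tfrac12]$ we have $\|\alpha\| = \alpha$ and $\sin(\pi\alpha) \ge 2\alpha$ by concavity, which closes the argument.
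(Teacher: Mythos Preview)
Your argument is correct and is precisely the standard geometric-series proof; the paper itself does not give a proof but simply cites \cite[Chap.~3]{tenlectures}, which contains exactly this computation. There is nothing to add.
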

\begin{proof}
This is quite standard. See, e.g., \cite[Chap. 3]{tenlectures}.
\end{proof}

Now we prove a lemma that gives an estimate for ${\widehat{\delta_{\bar{\mathbf{I}}}}}(B)$: 
\begin{lemma}
\[ |{\widehat{\delta_{\bar{\mathbf{I}}}}}(B)| \leq p^{-n^2} \prod_{1\leq i, j\leq n}
min \left(cp, \frac{1}{||b_{ij}/p||}\right). \]

\end{lemma}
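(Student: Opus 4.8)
The plan is to compute $\widehat{\delta_{\bar{\mathbf I}}}(B)$ directly from the definition of the Fourier transform on the finite abelian group $M(n,\F_p)$, and then factor the resulting exponential sum into a product over the $n^2$ matrix entries, so that each factor can be bounded by the preceding lemma. Recall $\widehat{\delta_{\bar{\mathbf I}}}(B)=\langle \delta_{\bar{\mathbf I}},\psi_B\rangle = \frac{1}{p^{n^2}}\sum_{X\in M(n,\F_p)}\delta_{\bar{\mathbf I}}(X)\overline{e_p(\tr(BX))}$. The key elementary observation is that $\tr(BX)=\sum_{i,j} b_{ij} x_{ji}$, so the additive character $e_p(\tr(BX))$ factors as $\prod_{i,j} e_p(b_{ij}x_{ji})$, and the indicator $\delta_{\bar{\mathbf I}}$ also factors as a product over entries (this is exactly the meaning of $\mathbf I$ being a matrix interval, $\bar{\mathbf I}=\prod \bar I_{ij}$). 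Hence the sum over $X$ separates into a product of $n^2$ independent one-dimensional sums.

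First I would write $|\widehat{\delta_{\bar{\mathbf I}}}(B)| = p^{-n^2}\left|\prod_{1\le i,j\le n}\sum_{x\in \bar I_{ij}} e_p(-b_{ij}x)\right| = p^{-n^2}\prod_{1\le i,j\le n}\left|\sum_{x\in \bar I_{ij}} e_p(b_{ij}x)\right|$, being a little careful about the bookkeeping that the $(i,j)$ entry of $B$ pairs with the $(j,i)$ entry of $X$ — but since we take the product over all ordered pairs $(i,j)$ and the lengths $|I_{ij}|$ enter the final bound symmetrically (via the blanket hypothesis $|I_{ij}|\le cp$), this transposition is cosmetic and does not affect the estimate. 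Next, for each fixed pair $(i,j)$, the inner sum $\sum_{x\in \bar I_{ij}} e_p(b_{ij}x)$ is a sum of at most $|I_{ij}|\le cp$ terms of an additive character over an interval of integers mod $p$; since $I_{ij}$ has length at most $p$, its reduction $\bar I_{ij}$ may be taken to be a genuine interval of integers (no wraparound issues beyond at most one full residue system, which only helps), and the sum is a geometric progression $\sum_{x} e(b_{ij}x/p)$. Applying the previous lemma (inequality \eqref{linearsum}) with $\alpha = b_{ij}/p$ and $N=|I_{ij}|\le cp$ gives $\left|\sum_{x\in \bar I_{ij}} e_p(b_{ij}x)\right|\le \min\!\left(cp,\frac{1}{2\|b_{ij}/p\|}\right)\le \min\!\left(cp,\frac{1}{\|b_{ij}/p\|}\right)$.

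Multiplying these $n^2$ bounds together and inserting the factor $p^{-n^2}$ yields exactly
\[
|\widehat{\delta_{\bar{\mathbf I}}}(B)| \le p^{-n^2}\prod_{1\le i,j\le n}\min\!\left(cp,\frac{1}{\|b_{ij}/p\|}\right),
\]
which is the claim. I do not expect a genuine obstacle here; the only point requiring minor care is the translation/reduction step — an interval $I_{ij}$ in $\Z$ of length $\le p$ reduces injectively (or at worst with one full period, which drops out) to a set that we may shift to be an interval $\{a, a+1,\dots, a+|I_{ij}|-1\}$ of integers, and a shift of the summation variable by $a$ only multiplies the sum by the unimodular factor $e_p(ab_{ij})$, so the absolute value is unchanged and the cited lemma applies verbatim. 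The rest is the multiplicativity of characters under $\tr$ and of indicator functions over a product set, both of which are formal.
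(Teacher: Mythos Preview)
Your argument is correct and follows essentially the same route as the paper: expand the Fourier transform, use $\tr(BX)=\sum_{i,j}b_{ij}x_{ji}$ to factor the sum into $n^2$ one-variable geometric sums over the component intervals, and bound each factor by the standard $\min(N,\|\alpha\|^{-1})$ estimate. Your extra remarks about the $(i,j)\leftrightarrow(j,i)$ bookkeeping and the unimodular shift are exactly the minor points the paper leaves implicit.
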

\begin{proof}
\begin{align*}
\widehat{\delta_{\bar{\mathbf{I}}}}(B) &=
 \frac{1}{p^{n^2}}\sum_{X \in \mathscr{S}_p}\delta_{\bar{\mathbf{I}}}(X) \psi_X(-B)\\
&=\frac{1}{p^{n^2}}\sum_{X \in \bar{\mathbf{I}}}e\left(\frac{-\tr (BX)}{p}\right).
\end{align*}
Now the sum over $X$ factors as
\[\prod_{i, j}\sum_{x_{ji}\in \bar{\mathbf{I}}_{ji}} e\left(\frac{b_{ij}x_{ji}}{p}\right).
\]
Since for every $(i,j)$ the interval $\bar{\mathbf{I}}_{ji}$ is of length at most $cp$, an application
of Eq. \eqref{linearsum} yields the bound
\begin{align*}
\sum_{x_{ji}\in \bar{\mathbf{I}}_{ji}} e\left(\frac{b_{ij}x_{ji}}{p}\right) &\leq 
min \left(|\bar{\mathbf{I}}_{ji}|, \frac{1}{||b_{ij}/p||}\right)\\
& \leq min \left(cp, \frac{1}{||b_{ij}/p||}\right).
\end{align*}
The lemma follows by taking product over all the entries.
\end{proof}
We now consider the sum over $B$. 
\begin{lemma}\label{sumoverB}
\[ \sum_{B \in M(n, \F_p)} \left|\widehat{\delta_{\bar{\mathbf{I}}}}(B)\right|
 \ll  (\log p)^{n^2}.\]
For $p\geq 11$, the implied constant can be taken to be ${\left(\frac{c+3}{2}\right)}^{n^2}$. 
\end{lemma}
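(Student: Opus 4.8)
The plan is to combine a coordinatewise harmonic estimate with the observation that, as $B=(b_{ij})$ ranges over $M(n,\F_p)$, the entries $b_{ij}$ range independently over a complete residue system modulo $p$, so that the target sum factorizes into $n^2$ copies of a single one-variable sum. First I would apply Eq. \eqref{linearsum} (retaining the sharp factor $\tfrac12$) to each of the $n^2$ coordinate sums appearing in $\widehat{\delta_{\bar{\mathbf{I}}}}(B)$ to get
\[
\big|\widehat{\delta_{\bar{\mathbf{I}}}}(B)\big| \;\le\; p^{-n^2}\prod_{1\le i,j\le n}\min\!\Big(cp,\tfrac{1}{2\|b_{ij}/p\|}\Big).
\]
Summing over $B$ and factoring the sum as a product over entries then yields
\[
\sum_{B\in M(n,\F_p)}\big|\widehat{\delta_{\bar{\mathbf{I}}}}(B)\big| \;\le\; p^{-n^2}\,\Sigma^{\,n^2}, \qquad \Sigma:=\sum_{b=0}^{p-1}\min\!\Big(cp,\tfrac{1}{2\|b/p\|}\Big),
\]
so the whole lemma reduces to the bound $\Sigma\le\tfrac{c+3}{2}\,p\log p$ for $p\ge 11$ (and $\Sigma\ll p\log p$ with an absolute constant in general).

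To evaluate $\Sigma$ I would split off the term $b=0$, which contributes $cp$, and bound the remaining terms by discarding the truncation at $cp$ (this only enlarges the bound): for $1\le b\le p-1$ one has $\|b/p\|=\min(b,p-b)/p$, hence $\tfrac{1}{2\|b/p\|}=\tfrac{p}{2\min(b,p-b)}$. Pairing $b$ with $p-b$ (here $p$ is odd), each value $k\in\{1,\dots,(p-1)/2\}$ of $\min(b,p-b)$ is attained exactly twice, so
\[
\sum_{b=1}^{p-1}\frac{1}{2\|b/p\|} \;=\; p\sum_{k=1}^{(p-1)/2}\frac1k \;=\; p\,H_{(p-1)/2},
\]
and therefore $\Sigma\le cp+p\,H_{(p-1)/2}$, i.e. $\Sigma/p\le c+H_{(p-1)/2}$.

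The only point needing a little care is the numerical estimate of the harmonic number. Using $H_m\le\log m+\gamma+\tfrac{1}{2m}$ with $m=(p-1)/2$ gives $H_{(p-1)/2}<\log p-\log 2+\gamma+\tfrac{1}{p-1}<\log p$ once $p\ge 11$, since $\log 2-\gamma\approx 0.12$ absorbs $\tfrac{1}{p-1}\le\tfrac1{10}$. Hence $\Sigma/p<c+\log p$, and because $\tfrac{2c}{c+1}<2<\log 11\le\log p$ one has $c+\log p\le\tfrac{c+3}{2}\log p$; raising to the $n^2$-th power yields $\sum_B|\widehat{\delta_{\bar{\mathbf{I}}}}(B)|\le\big(\tfrac{c+3}{2}\big)^{n^2}(\log p)^{n^2}$. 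For the qualitative statement $\ll(\log p)^{n^2}$ the crude bound $H_{(p-1)/2}\le 1+\log p$ suffices with no restriction on $p$. I do not expect a genuine obstacle here — the argument is a factorization followed by a harmonic-series computation — so the only thing to monitor is that the constant comes out exactly as $\tfrac{c+3}{2}$, which is why one should use the sharp form of Eq. \eqref{linearsum} together with the Euler--Mascheroni refinement of $H_m\le 1+\log m$.
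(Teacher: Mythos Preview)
Your argument is correct and follows the same strategy as the paper—factorize the sum over entries of $B$ and reduce to a single one-variable harmonic-type sum. You are in fact more careful than the paper: the paper drops the factor $\tfrac12$ from Eq.~\eqref{linearsum} in the preceding lemma and then asserts $\sum_{b=1}^{p-1}\|b/p\|^{-1}\le p\sum_{b=1}^{p-1}b^{-1}$, which is false as written (the left side equals $2pH_{(p-1)/2}>pH_{p-1}$), whereas your retention of the $\tfrac12$ together with the pairing $b\leftrightarrow p-b$ yields $\Sigma\le cp+pH_{(p-1)/2}$ and recovers the constant $\tfrac{c+3}{2}$ rigorously.
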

\begin{proof}
By the above lemma,
\[
 \sum_{B \in M(n, \F_p)} \left|\widehat{\delta_{\bar{\mathbf{I}}}}(B)\right|
 \leq \frac{1}{p^{{n^2}}}\sum_{B \in M(n, \F_p)} 
 \prod_{i, j} min \left(cp, \frac{1}{||b_{ij}/p||}\right).
\]  
Since $B$ is varying over the set of all $n \times n$ matrices
over $\F_p$, for each $(i,j)$, $b_{ij}$ varies form $0$ to $p-1$ and
hence the above sum of products can be written as a product of sums as
follows:
\[
\sum_{B \in M(n, \F_p)} 
 \prod_{i, j} min \left(cp, \frac{1}{||b_{ij}/p||}\right)
 =\prod_{i, j}\sum_{0 \leq b_{ij} \leq p-1}min \left(cp, 
\frac{1}{||b_{ij}/p||}\right).
\]
Now we bound the individual sums. We have, 
\begin{align*}
 \sum_{0 \leq b_{ij} \leq p-1}min \left(cp,
   \frac{1}{||b_{ij}/p||}\right) 
&\leq cp+ p\sum_{1 \leq b \leq p-1} \frac{1}{b}\\
&\leq cp+p(1+\log p)\\
& \leq \left(\frac{c+3}{2}\right)p\log p,
\end{align*}
provided that $\log p \geq 2$; i.e., $p \geq 11$.

Hence, for $p \geq 11$, 
\begin{align*}
\sum_{B \in M(n, \F_p)} \left|\widehat{\delta_{\bar{\mathbf{I}}}}(B)\right|
 \leq\left(\left(c+1\right)\log p\right)^{n^2}.
\end{align*}
For smaller primes, a similar bound holds with a different constant. 
\end{proof}
The proof of  Prop.
\ref{sphi} is now clear from Lemma \ref{sumoverB} and Eq. \eqref{Sophi}.

\subsection{Estimate for $\widehat{\chi_{\rho}}$.} Suppose 
$\rho$ is an irreducible complex representation of
$GL(2,\F_P)$. Extend the character $\chi_{\rho}$ of $\rho$ to a
function on $M(n, \Z/p\Z)$ by defining it to be zero on singular
matrices. Then, 
\begin{align*}
 \widehat{\chi_{\rho}}(A)&= \la \chi_{\rho},
 \psi_A\ra=\frac{1}{p^4}\sum_{X\in M(2,
   \Z/p\Z}\chi_{\rho}(X)\overline{\psi_A(X)}\\
&=\frac{1}{p^4}\tr(G(\rho, -A)).
\end{align*}
As a consequence of Theorem
\ref{gaussest-general}, we 
have:
\begin{proposition} \label{chihatestimate}
Let $\rho$ be a non-trivial  irreducible complex representation of
$GL(2,\F_P)$ and $A$ a non-zero matrix. Then
\[ |\widehat{\chi_{\rho}}(A)|\leq d(\rho)p^{-2},\]
where $d(\rho)$ is the dimension of $\rho$. 
\end{proposition}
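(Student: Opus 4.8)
The plan is to deduce the proposition in one line from the identity established immediately above and the uniform Gauss-sum bound of Theorem \ref{gaussest-general}. Recall that for a non-trivial irreducible representation $\rho$ of $GL(2,\F_p)$ and any $A \in M(2,\F_p)$ we have shown $\widehat{\chi_\rho}(A) = p^{-4}\tr(G(\rho,-A))$, the factor $p^{-4}$ coming from $|M(2,\F_p)| = p^4$ in the definition of the Fourier transform. First I would observe that if $A$ is non-zero then so is $-A$; hence Theorem \ref{gaussest-general} applies and gives $|\tr(G(\rho,-A))| \le d(\rho)p^2$. Dividing through by $p^4$ yields $|\widehat{\chi_\rho}(A)| \le d(\rho)p^{-2}$, which is exactly the asserted estimate.

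No case distinction on the rank of $A$ is required at this point, since it has already been absorbed into Theorem \ref{gaussest-general}: for non-singular $A$ the bound $|\tr(G(\rho,A))| \le d(\rho)p^2$ follows from the equivariance relation \eqref{equivariance}, Schur's lemma \eqref{schur} and Kondo's estimate $|g(\rho)| = p^{(4-k(\rho))/2} \le p^2$, while for non-zero singular $A$ it follows from the explicit values computed in Theorem \ref{theorem:sgs} together with $|G(\chi)| = \sqrt{p}$ and the dimension formulas $d(I_{\chi,1}) = p+1$, $d(St) = p$; in fact in the singular case the bound is comfortably stronger than $d(\rho)p^2$, the extreme instance being $\tr(G(St,N)) = p^2(p-1) < p^3 = d(St)p^2$. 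Consequently there is no genuine obstacle here, the proposition being an immediate corollary, and the only things to keep track of are the normalization constant $p^{-4}$ and the harmless observation that negating $A$ preserves its non-vanishing.
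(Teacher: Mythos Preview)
Your proof is correct and matches the paper's approach exactly: the paper states the proposition as an immediate consequence of Theorem \ref{gaussest-general} via the identity $\widehat{\chi_\rho}(A)=p^{-4}\tr(G(\rho,-A))$ computed just above, and your second paragraph recapitulates precisely the case analysis that the paper had already absorbed into Theorem \ref{gaussest-general}.
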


\subsection{Proof of Theorem \ref{main2}}
We now prove Theorem \ref{main2}.  Let $\rho$ be an irreducible, 
complex representation of $GL(2,\F_p)$. 
In the foregoing notation the sum we want to 
estimate is,
\[S(\chi_{\rho}, \mathbf{I}) =\sum_{A\in \mathbf{I}}\chi_{\rho}(A),\]
where $\mathbf{I}$ is a matrix interval of the form 
\[
I=\prod_{ij} I_{ij},
\]
where, for each pair $(i,j)$ $I_{ij}$ is an interval of length $|I_{ij}|\leq cp.$
By Prop.  \ref{sphi}, we get
\begin{equation}\label{napveqn2}
S(\chi_{\rho}(A), \mathbf{I}) \leq 
||\widehat{\chi}_{\rho}||_{\infty}p^{4}\left((\left(\frac{c+3}{2}\right)\log p\right)^{4},
 \end{equation}
and by Theorem \ref{chihatestimate}, 
\[  ||\widehat{\chi}_{\rho}||_{\infty}\leq p^{-2}d(\rho).
\]
This proves Theorem \ref{main2}.

\section{Growth of elliptic elements: Proof of Theorem
  \ref{thm:growth-elliptic}}
In this section, we give 
an estimate for the function $S(\delta_{\Omega_e}, x)$ that countins the
number of integer matrices  of height up to $x$ that reduce to
elliptic elements modulo $p$. 
In other words, we need to count integer matrices of height up to $x$ for which the
characteristic polynomials are irreducible over $\F_p$; i.e.,
integer matrices $ \begin{pmatrix} a & b\\ c &
  d\end{pmatrix}$ of height up to $x$ such that
$(\tr)^2 -4(\deter)=(a-d)^2+4bc$ is not a quadratic residue modulo $p$.  

Let $\chi$ denote the Legendre symbol modulo $p$ and let $\Delta$ denote
the collection of elements in $M(2,\F_p)$ that have
characteristic polynomials with discriminant divisible by $p$. 
Consider the sum
\begin{equation}\label{SS}
 S = \frac 1 2 \mathop{\sum\sum\sum\sum}_{0\leq |a|,|b|,|c|,|d|\leq
      x}
\left\{ 1-\chi\left((a-d)^2+4bc) \right)     \right\}. 
\end{equation}
When $p$ divides $\deter(A)$, then the discriminant is always a square
modulo $p$. Hence, 
\begin{equation}\label{SSS}
S=S(\delta_{\Omega_e}, x)+\frac{1}{2}S(\delta_{\Delta}, x).
\end{equation}
Now, from Eq. \eqref{SS},
\begin{equation}\label{S}
S=\frac 1 2 (2[x]+1)^4 - \frac 1 2 S',
\end{equation}
where
\[
 S'=\mathop{\sum\sum\sum\sum}_{0\leq|a|,|b|,|c|,|d|\leq x}
 \chi((a-d)^2+4bc).  
 \]   
When $p$ divides $b$, then  $\chi((a-d)^2+4bc)$ is identically
$1$, unless $a\equiv d \modp$ when it vanishes. 
Thus the contribution of terms with $p|b$ to $S'$ is:
\[ (2x/p+O(1))(2x)^3+O(x^3/p)-(2x/p+O(1))(2x)^2(2x/p+O(1))=
  16\left(\frac{1}{p}-\frac{1}{ p^2}\right)x^4+O(x^3).\]
When $b$ is invertible in $\F_p$, we 
pull it out in order to obtain a sum over $c$ varying in an interval which can
be estimated by the classical Polya-Vinogradov bound \eqref{pv}. 
The sum over the other three variables is bounded trivially. Thus the contribution of terms
with $b\not\equiv 0 \modp$ is
\begin{align*}
&\mathop{\sum\sum\sum\sum}_{\substack{0\leq |a|,|b|,|c|,|d|\leq
      x\\ b \not\equiv 0 \modp}} \chi((a-d)^2+4bc)\\
 &= \mathop{\sum\sum}_{0<|a|, |d|\leq x} \sum_{0<|b|\leq x, b\not \equiv 0 \modp}\chi(4b)\sum_{0<|c|\leq x}\chi((4b)^{-1}(a-d)^2+c)\\
 &\leq  \mathop{\sum\sum}_{0\leq |a|, |d|\leq x} \sum_{0\leq |b|\leq
   x, b\not \equiv 0 \modp}|\chi(4b)|\left|\sum_{0<|c|\leq x}\chi((4b)^{-1}(a-d)^2+c)\right|\\
  &\ll  \mathop{\sum\sum\sum}_{0<|a|, |b|, |d|\leq x} \sqrt{p}\log p\\
  &\ll x^3\sqrt{p}\log p.
\end{align*}
Hence, 
\begin{equation}\label{eqn:S}
 S= 8x^4-\frac{8x^4}{p}+ \frac{8x^4}{p^2}+ O( x^3\sqrt{p}\log p).
\end{equation}
It remains to estimate $S(\delta_{\Delta}, x)$ which is the content of the next Lemma. 
\begin{lemma}\label{countingdisc0}
The number of matrices  $ \begin{pmatrix} a & b\\ c &
  d\end{pmatrix}$ of height up to $x$ and with $(a-d)^2+4bc \equiv 0 \modp$ 
  is $\frac{16x^4}{p} + O( x^3)$.
\end{lemma}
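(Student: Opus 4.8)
The plan is to count solutions $(a,b,c,d)$ with $|a|,|b|,|c|,|d|\leq x$ to the congruence $(a-d)^2 + 4bc \equiv 0 \pmod p$ by first reducing to a counting problem over $\F_p$ and then inflating by the number of integer lifts of each residue. Setting $e = a-d$, the congruence becomes $e^2 \equiv -4bc \pmod p$; so the first step is to count, for each residue class $(\bar a, \bar b, \bar c, \bar d) \in \F_p^4$, whether it satisfies the congruence, and to show the number of such classes is $p^3 + O(p^2)$ — or, more directly, to organize the count by the variables $b, c, e$ and observe that for fixed $(\bar b, \bar c)$ the number of $\bar e$ with $\bar e^2 = -4\bar b\bar c$ is $1 + \chi(-4\bar b\bar c)$ when $\bar b \bar c \neq 0$ (with the convention $\chi(0)=0$), and is $1$ when $\bar b\bar c \equiv 0$.

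First I would pass from counting quadruples to counting triples: the map $(a,b,c,d)\mapsto (b,c,d,e)$ with $e=a-d$ is a bijection onto integer points, but the box in the new coordinates is skewed, so instead I would fix $b,c,d$ in their boxes and count $a$ with $a \equiv d + e_0 \pmod p$ for each admissible residue $e_0$; since the interval for $a$ has length $2x + O(1) \leq $ roughly $2x$ and $b,c\leq x < p$ (we may assume $x < p$ as in the Theorem~\ref{main} reduction, or simply that $|I_{ij}|\leq cp$), each residue class modulo $p$ for $a$ contains $\frac{2x}{p} + O(1)$ integers in the box. Summing over the $b,c,d$ boxes and over admissible $e_0$, the total becomes
\[
\sum_{|b|,|c|,|d|\leq x} \bigl(\#\{\bar e : \bar e^2 \equiv -4\bar b\bar c \pmod p\}\bigr)\left(\frac{2x}{p} + O(1)\right).
\]
The $O(1)$ term contributes $O(x^3)$ after summing the inner count (which is $O(1)$) over the $b,c,d$ box and absorbing the $d$-box length. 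So the main term is $\frac{2x}{p}$ times $\sum_{|b|,|c|\leq x} (2x+O(1)) \cdot \#\{\bar e : \bar e^2 \equiv -4\bar b\bar c\}$, where the $d$-sum contributes its length $2x + O(1)$.

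The core estimate is then $\sum_{|b|,|c|\leq x} \#\{\bar e : \bar e^2 \equiv -4\bar b\bar c \pmod p\}$. Splitting off the terms with $p \mid b$ or $p \mid c$: these contribute $O(x^2/p \cdot x) = O(x^3/p)$... more carefully, the count $\#\{\bar e\}$ is at most $2$ always and at most $1$ when $\bar b\bar c = 0$, and there are $O(x^2/p)$ pairs with $p\mid bc$, each contributing the $d$-box and one $\bar e$, which feeds $O(x^3/p \cdot x/p) = O(x^3)$ back into the grand total — a lower-order term. For $bc$ invertible, the count is $1 + \chi(-4bc) = 1 + \chi(-1)\chi(bc)$, so the sum is $\sum_{|b|,|c|\leq x,\, p\nmid bc} 1 + \chi(-1)\sum \chi(b)\chi(c)$. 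The first piece is $(2x)^2 + O(x^2/p\cdot x)$... it is $(2x+O(1))^2 - O(x^2/p)$, giving the main contribution. The character sum $\sum_{|b|\leq x}\chi(b)\sum_{|c|\leq x}\chi(c) = \bigl(\sum_{|b|\leq x}\chi(b)\bigr)^2$ is bounded by $O(p \log^2 p)$ via classical Polya--Vinogradov \eqref{pv}, which is $O(x^3)$ sized after the remaining factors... I should track sizes: this character-sum piece, multiplied by $\frac{2x}{p}\cdot(2x+O(1))$, gives $O(x^2 \cdot p\log^2 p / p) = O(x^2 \log^2 p)$, certainly $O(x^3)$ when $x \gg \log^2 p$, and otherwise trivially absorbed.

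Assembling: the total is $\frac{2x}{p}\cdot(2x + O(1))\cdot\bigl((2x)^2 + O(x^3/\text{lower}) + O(x^3)\bigr)$, wait — I need to be careful that the three box-length factors $(2x)$ for $a$ (via the $\frac{2x}{p}$ class count times... no). Let me restate cleanly: the count equals $\frac{2x + O(1)}{p}$ (lifts of $a$) times $(2x+O(1))$ ($d$-box) times $\sum_{|b|,|c|\leq x}\#\{\bar e\}$, and the last sum equals $(2x+O(1))^2 + O(x^3/p) + O(p\log^2 p)$. The leading term is thus $\frac{2x}{p}\cdot 2x \cdot (2x)^2 = \frac{16x^4}{p}$, and every error term is $O(x^3)$ (using $x < p$ to control $O(x^3/p)\cdot x/p$ type terms and Polya--Vinogradov for the character sum). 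I expect the main obstacle to be bookkeeping: keeping the $O(1)$-versus-$O(x)$ error terms straight through the product of three box lengths and the residue-class count, and confirming that the Polya--Vinogradov contribution and the $p\mid bc$ contribution are both genuinely $O(x^3)$ under the standing hypothesis $x < p$ (equivalently $|I_{ij}| \leq cp$). No deep input beyond \eqref{pv} and elementary counting of integers in an interval lying in a fixed residue class mod $p$ is needed.
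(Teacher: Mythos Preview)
Your approach is correct but organized differently from the paper's, and the difference is instructive. You fix $(b,c,d)$ and count admissible $a$ via the residues $\bar e$ with $\bar e^2\equiv -4\bar b\bar c$; since this equation is quadratic in $\bar e$, the solution count is $1+\chi(-4\bar b\bar c)$ and you are led to bound the character sum $\bigl(\sum_{|b|\le x}\chi(b)\bigr)^2$. The paper instead fixes $(a,d)$ and counts $(b,c)$: for $a\not\equiv d$ and $b\not\equiv 0$ the congruence determines $c$ \emph{linearly}, so the count is purely by residue classes and no Legendre symbol ever appears. Both routes give $16x^4/p+O(x^3)$, but the paper's ordering of variables sidesteps the character sum entirely and needs nothing beyond ``number of integers in an interval in a fixed residue class is $(\text{length})/p+O(1)$''.

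Two small points on your execution. First, Polya--Vinogradov is overkill here: the trivial bound $\bigl|\sum_{|b|\le x}\chi(b)\bigr|\le 2x+1$ already gives a contribution $O(x^4/p)$ to the total, which is $O(x^3)$ once $x\le p$; for $x>p$ periodicity reduces the character sum to one over an interval of length $<p$, hence $\le p$, giving $O(px^2)=O(x^3)$. So your appeal to \eqref{pv} is unnecessary, and the ``otherwise trivially absorbed'' clause for small $x$ should be replaced by this trivial estimate rather than left as a remark. Second, the lemma is stated (and used in the proof of Proposition~\ref{thm:growth-elliptic}) without the hypothesis $x<p$, so you should not assume it; the case split $x\le p$ versus $x>p$ above handles both regimes uniformly.
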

\begin{proof}
We need to count $4$-tuples $(a, b, c, d)$ such 
that $-x\leq a, b, c, d \leq x$ and $(a-d)^2 \equiv -4bc \modp$. 
First we note that the number of integers in the interval 
$[-x,x]$ is $2[x]+1=2x+O(1)$ 
and the number of integers in this interval that are divisible by $p$
or lies in a fixed residue class modulo $p$ 
is $2[x/p]+1=2x/p+O(1)$. 
The number of pairs $(a,d)$ with $a \equiv d \modp$ is, 
therefore, 
\[(2x+O(1))(2x/p+O(1))=4x^2/p+O(x),\] 
and for each such
a pair, the number of possible pairs $(b,c)$, i.e., 
with the property $bc \equiv 0 \modp$ is 
\[2(2x-2x/p+O(1))(2x/p+O(1)) +(2x/p+O(1))^2= 4x^2(2/p -1/p^2)+O(x).\]
On the other hand, the number of pairs $(a,d)$ with $a \not\equiv d
\modp$ 
is \[(2x+O(1))(2x-2x/p+O(1))=4x^2(1-1/p)+O(x).\] 
For each such pair, fixing any $b 
\not \equiv 0 \modp$ will determine $c$ modulo 
$p$. Hence, for each pair $(a,d)$ with $a \not\equiv d \modp$ 
there is a total of 
\[(2x-2x/p+O(1))(2x/p+O(1))=4x^2(1/p-1/p^2)+O(x)\]
 many pairs $(b,c)$. 
Hence the total number we want is 
\begin{align*}
(4x^2/p+&O(x))(4x^2(2/p
          -1/p^2)+O(x))+(4x^2(1-1/p)+O(x))(4x^2(1/p-1/p^2)+O(x))\\
&=16x^4/p +O(x^3).
\end{align*} 
\end{proof}
From Eq. \eqref{eqn:S}, \eqref{SSS}
and Lemma \ref{countingdisc0}, we have, 
\begin{align*}
S(\delta_{\Omega_e},x)& =S-\frac{1}{2}S(\delta_{\Delta}, x)\\
&=8x^4-\frac{8x^4}{p} +\frac{8x^4}{p^2}+ O( x^3\sqrt{p}\log p)-
  \frac{8x^4}{p} + O( x^3)\\
&=8\left(1-\frac{2}{p}+\frac{1}{ p^2}\right) x^4 + O(x^3 \sqrt{p}\log p).
\end{align*}
This proves Prop. \ref{thm:growth-elliptic}.

\section{Growth of primitive elements: 
Proof of Theorem \ref{lqprthm}}

In this section our principal interest is in the elliptic semisimple
conjugacy classes (see \S 2.1).   
Our goal is to count
integer matrices of height up to $X$ that reduces to a primitive
element modulo $p$.  

\subsection{Fourier expansion of $\delta_{\Omega_{prim}}$}
In order to estimate $S(\delta_{\Omega_{prim}}, x)), x)$, we begin by following the
method given in \S 1.4. First we expand the characteristic function of
$\delta_{\Omega_{prim}}$  in a finite Fourier series. Denoting $c_{\chi_{\rho}}$ 
by $c_{\rho}$ for ease of notation, we write
\begin{equation}\label{Fourier}
\delta_{\Omega_{prim}}=\sum_{\rho\in \hat{G}} c_{\rho}\cdot \chi_{\rho},
\end{equation}
where $\rho$ varies over the set of irreducible representations
of $G$ and the Fourier coefficients $c_{\rho}$ are given by 
\[
c_{\rho}=\la \delta_{\Omega_{prim}}, \chi_{\rho}\ra =\frac{1}{|G|}
\sum_{\omega \in \Omega_{prim}} \overline{\chi_{\rho}(\omega)}. 
\]

Let $T$ be the collection of  conjugacy classes consisting of primitive element in $G$. 
Each conjugacy class $t\in T$ is of size $(p^2-p)$ and is
defined by a pair $\{\zeta_t, ~\zeta_t^p\}$, where $\zeta_t$ generates 
$\F_{p^2}^*$. Also, recall that
$|G|=(p^2 -p) (p^2 -1)$. Thus we have the following 
formula for the Fourier coefficients:

\begin{equation}\label{fcformula}
  c_{\rho}=\frac{1}{p^2 -1}\sum_{t\in T} \overline{\chi_{\rho}(\zeta_t)}.
\end{equation}

The next proposition gives estimates for the Fourier coefficients 
for different types of characters. 

\begin{proposition}\label{fcdetail}
 (i) For the one-dimensional representation $\rho=U_{\eta}$, where 
$\eta:\F_p^{\ast}\mapsto \C^{\ast}$ is a character,
 \begin{equation}
  c_{U_{\eta}}=\frac{1}{2}\sum_{\substack{d|p^2 -1\\ \text{ord}(\eta)|d}} \frac{\mu(d)}{d};
\end{equation}
in particular, for the trivial character $1_G$, 
the corresponding Fourier coefficient is given by
\begin{align*}
 c_1=c_{1_G}=\frac{|\Omega_{prim}|}{|G|}.
\end{align*}
(ii) For the Steinberg representation $St$ and its 
twists by characters $St_{\eta}$, we have 
 \begin{equation}
  c_{U_{\eta}}=-c_{St_{\eta}}=-\frac{1}{2}\sum_{\substack{d|p^2 -1\\ \text{ord}(\eta)|d}} \frac{\mu(d)}{d}.
\end{equation}
(iii) For the principal series representation $I_{\chi, \eta}$,
\begin{equation}
 c_{I_{\chi, \eta}}=0.
\end{equation}
(iv) For the cuspidal representation $X_{\phi}$,
\begin{equation}
 c_{X_{\phi}}=\sum_{\substack{d|p^2-1\\ \text{ord}(\phi)|d}}\frac{\mu(d)}{d}.
\end{equation}

\end{proposition}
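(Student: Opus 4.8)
The plan is to reduce everything to a single classical character sum over the generators of the cyclic group $\F_{p^2}^*$, starting from formula \eqref{fcformula}. The first observation is that the set $T$ of primitive conjugacy classes is in bijection with the set of Frobenius orbits $\{\zeta,\zeta^p\}$, $\zeta$ ranging over the $\phi(p^2-1)$ generators of $\F_{p^2}^*$, each orbit having exactly two elements (a generator never lies in $\F_p^*$). Hence for any class function $f$ on $G$ one has $\sum_{t\in T}f(\zeta_t)=\tfrac12\sum_{\zeta}f(d_\zeta)$, the sum over all generators $\zeta$. So the whole proposition is governed by the character values on the elliptic semisimple classes recalled in \S\ref{irred}. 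For the principal series $I_{\chi,\eta}$ the character vanishes on every elliptic class, so $c_{I_{\chi,\eta}}=0$, which is part (iii) and requires nothing further.

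For the one-dimensional $U_\eta$ one has $\chi_{U_\eta}(d_\zeta)=\eta(N(\zeta))$ by \eqref{char:abelian}, and for its Steinberg twist $\chi_{St_\eta}(d_\zeta)=-\eta(N(\zeta))$ by \eqref{char:steinberg}; since $N\colon\F_{p^2}^*\twoheadrightarrow\F_p^*$ is surjective, $\eta\circ N$ is a character of $\F_{p^2}^*$ of order exactly $\mathrm{ord}(\eta)$, so both cases reduce to the sum $S(\psi):=\sum_{\zeta}\psi(\zeta)$ over generators with $\psi=\overline{\eta\circ N}$; in particular $c_{St_\eta}=-c_{U_\eta}$, giving (ii). For the cuspidal $X_\phi$, by \eqref{char:cuspidal} one has $\chi_{X_\phi}(d_\zeta)=-(\phi(\zeta)+\phi(\zeta^p))$, so $\sum_{t\in T}\overline{\chi_{X_\phi}(\zeta_t)}=\tfrac12\sum_{\zeta}\bigl(-(\overline\phi(\zeta)+\overline\phi(\zeta^p))\bigr)=-S(\overline\phi)$, using that $\zeta\mapsto\zeta^p$ permutes the generators and that $\mathrm{ord}(\overline\phi)=\mathrm{ord}(\phi)$ (Frobenius being an automorphism). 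Thus (iv) also reduces to the same sum.

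The remaining ingredient is the standard evaluation of $S(\psi)$ for a character $\psi$ of a cyclic group $C$ of order $n$ (here $n=p^2-1$). Using the Möbius identity $\mathbbm 1[\langle\zeta\rangle=C]=\sum_{e\mid n}\mu(e)\,\mathbbm 1[\zeta^{n/e}=1]$ we get
\[
S(\psi)=\sum_{e\mid n}\mu(e)\sum_{\zeta\in C[n/e]}\psi(\zeta),
\]
where $C[n/e]$ denotes the unique subgroup of order $n/e$; by orthogonality on that subgroup the inner sum equals $n/e$ if $\mathrm{ord}(\psi)\mid e$ and $0$ otherwise, whence $S(\psi)=n\sum_{e\mid n,\ \mathrm{ord}(\psi)\mid e}\mu(e)/e$. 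Dividing by $2(p^2-1)$ in the cases $U_\eta$, $St_\eta$ and by $(p^2-1)$ in the cuspidal case, and substituting this back (with the dummy divisor renamed $d$), yields the four displayed formulas. Finally, specialising $\eta$ to the trivial character in (i) gives $c_{1_G}=\tfrac12\sum_{e\mid p^2-1}\mu(e)/e=\phi(p^2-1)/(2(p^2-1))$, which agrees with $|\Omega_{prim}|/|G|$ from \eqref{primitive}.

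There is no deep obstacle here; the work is entirely bookkeeping. The points requiring care are: the factor $\tfrac12$ from the two-to-one correspondence between generators of $\F_{p^2}^*$ and primitive conjugacy classes (and its absence once the cuspidal character has already absorbed both members of a Frobenius orbit); the verifications that $\eta\circ N$ and $\phi^p$ have the same order as $\eta$ and $\phi$; and the signs introduced by the Steinberg and cuspidal character formulas, which is the place where one must be most careful in matching the precise statements of (ii) and (iv).
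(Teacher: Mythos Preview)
Your argument is correct and follows essentially the same route as the paper's proof: both reduce the Fourier coefficients, via the character values on elliptic classes from \S\ref{irred}, to a sum of a fixed character of $\F_{p^2}^*$ over the generators, and then evaluate that sum by M\"obius inversion (the paper packages this step as Lemma~\ref{shapiro}). The only cosmetic differences are that you compute the generator sum $S(\psi)$ once and specialise, whereas the paper treats each representation type separately, and that for part~(ii) you invoke the explicit character value \eqref{char:steinberg} while the paper appeals to the decomposition $I_{P'}^G(\eta\oplus\eta)=U_\eta\oplus St_\eta$; these are equivalent.
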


Before proving this, we  recall a lemma expressing the characteristic function of the
set of generators of a cyclic group in terms of characters of the group
(see, e.g., \cite[Eq. (8.5.3), page 302]{Sh}).
\begin{lemma} \label{shapiro}
Let $m$ be a
  natural number and let $C_m$ be the cylic group of order $m$. Let
  $P$ be the subset consisting of generators of  $C_m$. Then 
\begin{equation}\label{charprimitive}
\delta_{P}=\sum_{d|m}\frac{\mu(d)}{d}\sum_{\chi^d=\chi_0}\chi, 
\end{equation}
where $\chi: C_m\to \C^*$ are characters of $C_m$, and $\chi_0$ is the
trivial character.   

\end{lemma}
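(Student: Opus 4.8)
The plan is to verify the identity \eqref{charprimitive} pointwise: both sides are functions on $C_m$, so I fix $g\in C_m$, set $e=\mathrm{ord}(g)$, and record that $\delta_P(g)=1$ exactly when $e=m$. First I would resolve the inner character sum on the right-hand side, and then invoke the elementary M\"obius identity $\sum_{d\mid n}\mu(d)=\mathbf{1}[n=1]$.

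For a fixed divisor $d\mid m$, the characters $\chi$ of $C_m$ with $\chi^d=\chi_0$ are precisely the elements of the unique subgroup $H_d$ of order $d$ inside the cyclic dual group $\widehat{C_m}$. By orthogonality of characters applied to the subgroup $H_d$, the sum $\sum_{\chi\in H_d}\chi(g)$ equals $|H_d|=d$ if $g$ lies in the annihilator $H_d^{\perp}=\{h\in C_m:\chi(h)=1\text{ for all }\chi\in H_d\}$, and vanishes otherwise. Identifying $C_m$ with $\Z/m\Z$ and $g$ with $a$, one checks in coordinates that the characters killed by $d$ are $x\mapsto e(jx/d)$ for $0\le j<d$, so that $H_d^{\perp}=d(\Z/m\Z)$, the subgroup of order $m/d$; equivalently, $g\in H_d^{\perp}$ precisely when $e\mid m/d$, i.e. when $d\mid m/e$. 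Hence $\sum_{\chi^d=\chi_0}\chi(g)=d\cdot\mathbf{1}[d\mid m/e]$.

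Substituting this into the right-hand side of \eqref{charprimitive}, the factor $d$ cancels the $1/d$, and since every divisor of $m/e$ automatically divides $m$, the sum collapses:
\[
\sum_{d\mid m}\frac{\mu(d)}{d}\sum_{\chi^d=\chi_0}\chi(g)
=\sum_{d\mid m/e}\mu(d)
=\mathbf{1}[m/e=1]=\mathbf{1}[e=m]=\delta_P(g),
\]
which is the asserted identity. (An essentially equivalent route: first establish $\delta_P=\sum_{e\mid m}\mu(m/e)\,\delta_{K_e}$, where $K_e\le C_m$ is the subgroup of order $e$, by M\"obius inversion of the relations $\delta_{K_e}=\sum_{e'\mid e}\bigl(\text{indicator of the elements of order }e'\bigr)$, and then expand each $\delta_{K_e}$ as a sum of characters via orthogonality; the substitution $d=m/e$ then recovers the stated formula.) I do not expect a genuine obstacle here --- the argument is essentially bookkeeping; the one step that deserves care is the identification of $H_d^{\perp}$ with the order-$m/d$ subgroup of $C_m$, which should be carried out explicitly in coordinates rather than merely asserted.
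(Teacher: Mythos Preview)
Your argument is correct and is essentially the paper's own proof, just run in the opposite order. The paper starts from the M\"obius identity $\delta_P(n)=\sum_{d\mid (n,m)}\mu(d)$, rewrites it as $\sum_{d\mid m}\mu(d)\,\xi_d(n)$ with $\xi_d$ the indicator of $d\mid n$, and then expands $\xi_d$ via orthogonality as $\frac{1}{d}\sum_{\chi^d=\chi_0}\chi$; you instead evaluate the inner character sum first and then collapse with M\"obius. Since $\gcd(n,m)=m/e$ for $e=\mathrm{ord}(g)$, your condition $d\mid m/e$ is exactly the paper's $d\mid(n,m)$, so the two computations are literally the same up to this change of variable and the order of the two steps.
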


\begin{proof}
We work with $C_m\simeq \Z/m\Z$. A set of representatives for $P$
is given by the natural numbers $n$ up to $m$ and coprime to $m$. From
the properties of M\"{o}bius $\mu$-function, 
\[ \delta_{P}(n) =\sum_{d|(n,m)}\mu(d).  \]
Let $\xi_d$ be the indicator function: 
\[\xi_d(n)=\begin{cases} 1 \quad \mbox{if} ~d|n\\
0\quad \mbox{otherwise}.
\end{cases}
\]
From orthogonality of characters, 
\[\xi_d(n)=\frac{1}{d}\sum_{\chi^d=\chi_0}\chi(n).\]
Hence, 
\[  \delta_{P}(n)=\sum_{d|m}\mu(d)
\xi_d(n)=\sum_{d|m}\frac{\mu(d)}{d}
\sum_{\chi^d=\chi_0}\chi(n).\]
\end{proof}

Now we prove Prop. \ref{fcdetail}.
\begin{proof}
Let $N: \F_{p^2}^* \rightarrow \F_p$ be the norm map. Then, by \eqref{fcformula}, 
 \begin{align*}
c_{U_{\eta}}&=\frac{1}{p^2 -1}\sum_{t \in T}\overline{\eta}(N(\zeta_t))\\
&=\frac{1}{2(p^2-1)}\sum_{\la \zeta \ra =\F_{p^2}^*} \overline{\eta}(N(\zeta)),
\end{align*}
where the factor $1/2$ is to account for the fact that the same conjugacy class is generated by both $\zeta$ and ${\zeta}^p$.
By Lemma \ref{shapiro}, 
\begin{align*}c_{U_{\eta}}&=\frac{1}{2(p^2-1)}\sum_{\zeta \in \F_p^*} 
\Bigl(\sum_{d|p^2 -1} \frac{\mu(d)}{d}
\sum_{\substack{\chi \in \widehat{\F_{p^2}^*}\\{\chi}^d=\chi_0}}
\chi(\zeta)\Bigr)\overline{{\eta \circ N}}(\zeta)\\
&=\frac{1}{2(p^2-1)}\sum_{d|p^2 -1} \frac{\mu(d)}{d}\sum_{{\chi}^d=\chi_0}
\sum_{\zeta \in \F_p^*} (\chi\,\overline{\eta \circ N})(\zeta)\\
&=\frac{1}{2}\sum_{d|p^2 -1} \frac{\mu(d)}{d}\sum_{{\chi}^d=\chi_0}\sum_{\zeta \in \F_p^*} \delta_{\chi=\eta \circ N}\\
&=\frac{1}{2}\sum_{\substack{d|p^2 -1\\ \text{ord}(\eta)|d}} \frac{\mu(d)}{d},
\end{align*}
by orthogonality of characters and the observation that $\text{ord}(\eta \circ N)=\text{ord}(\eta)$. Note that 
the
condition that $\eta^d$ is the trivial character translates to the
condition that the order of $\eta$ divides $d$. This proves part (i).

Now we consider part (iii). The character of a representation induced from
the Borel subgroup (say, upper triangular matrices $P'$) in $G$ is supported on the
conjugacy classes which intersect $P'$. By definition, the elliptic
classes cannot be conjugated into $P'$. This proves (iii). 

Part (ii) follows from the fact that 
$Ind_{P'}^G({\eta\oplus \eta}_{P'})=\mathbbm{\eta}\circ \deter\oplus
St_{\eta}$, where $\eta$ is a character of $\F_p^*$, and $\eta\oplus
\eta$ is considered as a character of $P'$ via the projection $P'\to
\F_p^*\oplus \F_p^*$. Hence $c_{U_{\eta}}=-c_{St_{\eta}}$. 

Now we prove part (iv).
We have 
 \[c_{X_{\phi}}=\frac{1}{p^2 -1}\sum_{t\in T} -
 (\overline{\phi(\zeta_{t})}+ 
\overline{\phi(\zeta_{t}^p)})=-\frac{1}{(p^2-1)}\sum_{\zeta }
\overline{\phi(\zeta)}, \]
where the last sum runs over all generators $\zeta$ of $\F_{p^2}^*$. 
From Lemma \ref{shapiro}, 
\[ c_{X_{\phi}}=-\frac{1}{(p^2-1)}\sum_{\zeta\in\F_{p^2}^* }
\sum_{d|p^2-1}\frac{\mu(d)}{d}\sum_{\chi^d=\chi_0}\chi\overline{\phi}(\zeta).\]
Interchanging the order of summation, we get
\[ c_{X_{\phi}}=-\frac{1}{(p^2-1)}\sum_{d|p^2-1}\frac{\mu(d)}{d}
\sum_{\chi^d=\chi_0}\sum_{\zeta\in\F_{p^2}^* }\chi\overline{\phi}(\zeta).\]
By orthogonality, the last sum is zero unless $\chi=\phi$ and this proves (iv).
\end{proof}

We give now an estimate for the sum of the Fourier coefficients.
\begin{lemma}\label{sumfc}

We have the estimates
 \[(i) \sum_{\eta} |c_{U_{\eta}}| \leq \tau(p^2 -1);\]
 \[
  (ii) \sum_{\eta} |c_{St_{\eta}}| \leq \tau(p^2 -1);
 \]
 \[(iii) \sum_{X_{\phi}} |c_{X_{\phi}}| \leq \tau (p^2 -1).\]
 Here $\tau(n)$ denotes the number of divisors of $n$.
\end{lemma}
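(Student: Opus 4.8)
The plan is to feed the explicit formulas for the Fourier coefficients obtained in Proposition \ref{fcdetail} into a straightforward interchange-of-summation argument. Since Proposition \ref{fcdetail}(ii) gives $c_{St_{\eta}}=-c_{U_{\eta}}$, parts (i) and (ii) are literally the same estimate, so it suffices to prove (i) and (iii).

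For (i), I would start from the formula $c_{U_{\eta}}=\frac12\sum_{d\mid p^2-1,\ \text{ord}(\eta)\mid d}\mu(d)/d$ and bound $|c_{U_{\eta}}|\le\frac12\sum_{d\mid p^2-1,\ \text{ord}(\eta)\mid d}1/d$ using $|\mu(d)|\le 1$. Summing over all characters $\eta$ of $\F_p^*$ and swapping the order of summation, the inner count becomes $\#\{\eta\in\widehat{\F_p^*}:\eta^d=1\}=\gcd(d,p-1)$, since $\F_p^*$ is cyclic of order $p-1$. Hence $\sum_{\eta}|c_{U_{\eta}}|\le\frac12\sum_{d\mid p^2-1}\gcd(d,p-1)/d$; because $\gcd(d,p-1)\le d$, every summand is at most $1$, so the sum is at most $\frac12\tau(p^2-1)\le\tau(p^2-1)$. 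Part (ii) then follows at once, as $|c_{St_{\eta}}|=|c_{U_{\eta}}|$ for every $\eta$.

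For (iii) I would run the identical computation with $\phi$ now ranging over $\widehat{\F_{p^2}^*}$: from Proposition \ref{fcdetail}(iv), $|c_{X_{\phi}}|\le\sum_{d\mid p^2-1,\ \text{ord}(\phi)\mid d}1/d$, and after interchanging the sums the inner count is $\#\{\phi\in\widehat{\F_{p^2}^*}:\phi^d=1\}=\gcd(d,p^2-1)=d$, using $d\mid p^2-1$. Thus the total is at most $\sum_{d\mid p^2-1}d/d=\tau(p^2-1)$. The only bookkeeping point to verify is that summing $|c_{X_{\phi}}|$ over all $\phi\in\widehat{\F_{p^2}^*}$ merely overcounts the quantity in the statement: the cuspidal representations are indexed by the characters $\phi$ with $\phi\ne\phi^p$ taken up to the identification $X_{\phi}\cong X_{\phi^p}$, and $|c_{X_{\phi}}|$ depends only on $\text{ord}(\phi)=\text{ord}(\phi^p)$, so $\sum_{X_{\phi}}|c_{X_{\phi}}|\le\sum_{\phi\in\widehat{\F_{p^2}^*}}|c_{X_{\phi}}|$. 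There is no genuine obstacle in this lemma; the crude inequality $\gcd(d,p-1)\le d$ does all the work, and the only mild subtlety is this last indexing remark in case (iii).
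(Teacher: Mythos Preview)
Your argument is correct and is essentially the same as the paper's: both start from the formulas of Proposition~\ref{fcdetail}, bound $|\mu(d)|\le 1$, interchange the sums, and then count characters $\eta$ (resp.\ $\phi$) with order dividing $d$. The only cosmetic difference is that the paper groups characters by their exact order $m$ and uses $\sum_{m\mid(d,p-1)}\phi(m)\le\sum_{m\mid d}\phi(m)=d$, whereas you write this directly as $\gcd(d,p-1)\le d$; your handling of the indexing issue in~(iii) (enlarging to all $\phi\in\widehat{\F_{p^2}^*}$) is also exactly what the paper does.
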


\begin{proof}
 Note that (ii) follows from (i) because $c_{U_{\eta}}=-c_{St_{\eta}}$.
 For part (i), we partition the sum according to the orders 
of the characters $\eta$ and apply the above proposition and 
estimate the sum as follows:
 
 \begin{align*}
  \sum_{\alpha} |c_{U_{\alpha}}|& \leq \frac{1}{2}\sum_{m|p -1}
                                  \phi(m) 
\sum_{\substack{d|p^2-1\\ m|d}}\frac{1}{d}\\
  &=\sum_{d|p^2 -1} \frac{1}{d} \sum_{m|(d, p-1)} \phi(m)\\
  &\leq \sum_{d|p^2 -1} \frac{1}{d} \sum_{m|d} \phi(m)\\
 & =\tau(p^2 -1),
 \end{align*}
where $\phi$ above denotes the Euler $\phi$-function and we have used elementary result $\sum_{m|d}\phi(m)=d$.
 For (iii) we recall that cuspidal representations are parametrized by
characters $\phi$ of ${\F}^{\ast}_{p^2}$ satisfying $\phi \neq
{\phi}^p$.  Suppose $\phi_1$ is a generator of the group of all
characters of ${\F}^{\ast}_{p^2}$. Then  ${\phi_1}^j$ for $
j=1,2,\cdots,p^2 -1$ are all the characters. For estimating the sum in
question, we first enlarge the set  to include all the  characters and
then divide the sum according to the order of the characters. Note
that the number of characters of order $m$ is $\phi(m)$. Thus we
obtain,
 \begin{align*}
  \sum_{X_{\phi}} |c_{X_{\phi}}|& \leq \sum_{m|p^2 -1} \phi(m) 
\sum_{\substack{d|p^2-1\\ m|d}}\frac{1}{d}\\
  &=\sum_{d|p^2 -1} \frac{1}{d} \sum_{m|d} \phi(m),\\
  &=\tau (p^2 -1).
  \end{align*}
\end{proof}

\subsection{Application of the $GL(2)$ Polya-Vinogradov estimate}

At this stage a direct application of Theorem 
\ref{main} and Lemma \ref{sumfc} easily gives us the following:
\begin{proposition}\label{weaklqpr}
\begin{equation}\label{weak}
S(\delta_{\Omega_{prim}}, x)=\frac{8\gamma_p\phi (p^2 -1)}{(p^2 -1)}x^4+O(x^3)
+O\left(p^{3+\ep}\right),
\end{equation}
where $\gamma_p= 1-1/p-1/p^2+1/p^3$.
 \end{proposition}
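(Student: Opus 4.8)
The plan is to feed the Fourier expansion \eqref{Fourier} of $\delta_{\Omega_{prim}}$ into the sum $S(\delta_{\Omega_{prim}}, x)=\sum_{h(A)\le x}\delta_{\Omega_{prim}}(A)$ and interchange the two finite sums, obtaining
\[
S(\delta_{\Omega_{prim}}, x)=\sum_{\rho\in\hat G}c_{\rho}\sum_{h(A)\le x}\chi_{\rho}(A).
\]
First I would pull out the trivial representation $1_G$ as the main term. By Prop. \ref{fcdetail}(i) we have $c_{1_G}=|\Omega_{prim}|/|G|=\phi(p^2-1)/(2(p^2-1))$, and by Lemma \ref{countingns} the inner sum for $\rho=1_G$ equals $16\gamma_p x^4+O(x^3)$; since $0\le c_{1_G}\le 1/4$, this produces exactly $\frac{8\gamma_p\phi(p^2-1)}{p^2-1}x^4+O(x^3)$.

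For the remaining representations I would invoke Theorem \ref{main}, which bounds the inner sum by $\ll d(\rho)p^2(\log p)^4$ with an absolute implied constant, so that
\[
\Bigl|\sum_{\rho\ne 1_G}c_{\rho}\sum_{h(A)\le x}\chi_{\rho}(A)\Bigr|\ll p^2(\log p)^4\sum_{\rho\ne 1_G}|c_{\rho}|\,d(\rho).
\]
The key simplification is that the principal series contribute nothing, since $c_{I_{\chi,\eta}}=0$ by Prop. \ref{fcdetail}(iii); hence only the one-dimensional $U_{\eta}$, the Steinberg twists $St_{\eta}$, and the cuspidals $X_{\phi}$ survive. Using the crude bound $d(\rho)\le p+1$ for each of these together with the three estimates of Lemma \ref{sumfc}, namely that $\sum_{\eta}|c_{U_{\eta}}|$, $\sum_{\eta}|c_{St_{\eta}}|$ and $\sum_{\phi}|c_{X_{\phi}}|$ are each at most $\tau(p^2-1)$, I obtain $\sum_{\rho\ne 1_G}|c_{\rho}|\,d(\rho)\ll p\,\tau(p^2-1)$.

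Finally, the classical divisor bound $\tau(m)=O_{\ep}(m^{\ep})$ gives $\tau(p^2-1)\ll_{\ep} p^{\ep}$, so the contribution of the nontrivial representations is $\ll p^2(\log p)^4\cdot p^{1+\ep}\ll p^{3+\ep}$, the logarithmic factor being absorbed into a fresh $\ep$. Adding this to the main term and the $O(x^3)$ coming from the trivial representation yields \eqref{weak}. I do not expect a genuine obstacle here: the proof is a direct assembly of Theorem \ref{main}, Prop. \ref{fcdetail} and Lemma \ref{sumfc}. The only points requiring a little care are the observation that the principal series drop out (their total dimension, of order $p^2$, would otherwise overwhelm the estimate) and the conversion of $\tau(p^2-1)$ into $p^{\ep}$ via the divisor bound.
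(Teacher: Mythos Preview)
Your proposal is correct and follows essentially the same line as the paper: expand $\delta_{\Omega_{prim}}$ in characters, separate the trivial representation via Lemma~\ref{countingns}, bound the remaining terms by Theorem~\ref{main}, and control $\sum_{\rho}|c_{\rho}|$ using Lemma~\ref{sumfc} together with the divisor bound. The only cosmetic difference is that the paper pulls out the uniform bound $d(\rho)\le p+1$ immediately to write the error as $O\bigl(p^{3}(\log p)^{4}\sum_{\rho}|c_{\rho}|\bigr)$, whereas you keep $d(\rho)$ inside the sum and then bound it; your explicit remark that the principal series drop out (Prop.~\ref{fcdetail}(iii)) is true but not strictly needed here, since their Fourier coefficients vanish in either formulation.
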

\begin{proof}
 We are interested in the sum
\begin{equation}
S(\delta_{\Omega_{prim}}, x):=\sum_{h(A) \leq x} \delta_{\Omega_{prim}} (\overline{A}),
\end{equation}
which, after an application of \eqref{Fourier} and 
interchange of summation, becomes
\begin{equation}\label{AOC}
\sum_{\rho} c_{\rho} \sum_{h(A) \leq x} \chi_{\rho}(A),
\end{equation}
from which we isolate the contribution of the trivial character. Thus we obtain
\begin{align*}
S(\delta_{\Omega_{prim}}, x)&= \frac{|\Omega_{prim}|}{|G|}\sum_{h(A) \leq x} \chi_1(A)+
 \sum_{\chi \neq \chi_1} c_{\rho}\sum_{h(A) \leq x} \chi_{\rho}(A)\\
 &=\frac{|\Omega_{prim}|}{|G|}16\gamma_p x^4+O(x^3)+O\left(p^3(\log p)^4 \sum_{\rho} |c_{\rho}|\right)\\
 &=\frac{8\gamma_p\phi (p^2 -1)}{(p^2 -1)}x^4 +O(x^3)+ \left(p^{3+\ep}\right),
\end{align*}
where we have estimated the sum over $A$ 
for non-trivial characters by  Theorem \ref{main}, we have appealed  to Lemma \ref{countingns}
proved below for the sum corresponding to the trivial character, we have 
applied  Lemma \ref{sumfc} for estimating the sum over 
Fourier coefficients, and finally we have applied
the standard bounds:
 $\tau(n),~\log n =O\left(n^{\ep}\right)$ for any $\ep >0$.
\end{proof}

The contribution of the trivial character is given by the following
lemma: 
 \begin{lemma}\label{countingns}
\begin{equation}\label{mainterm-trivialchar}
\sum_{h(A)\leq x} \chi_1(A)=16\gamma_p x^{4}+O(x^{3}),
\end{equation}
where $\gamma_p=1-\frac1p-\frac1{p^2}+\frac1{p^3}.$
\end{lemma}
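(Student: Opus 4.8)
The plan is to reduce the statement to a count of integer matrices of height at most $x$ whose determinant is divisible by $p$. Put $I=\{-\lfloor x\rfloor,\dots,\lfloor x\rfloor\}$, so that $L:=|I|=2\lfloor x\rfloor+1=2x+O(1)$, and let
\[
T(x)=\#\Big\{(a,b,c,d)\in I^4:\ ad\equiv bc\ \modp\Big\}
\]
be the number of matrices $\bigl(\begin{smallmatrix}a&b\\ c&d\end{smallmatrix}\bigr)$ of height $\le x$ with $p\mid\det$. Since $\chi_1(A)=1$ precisely when $p\nmid\det A$, we have $\sum_{h(A)\le x}\chi_1(A)=L^4-T(x)$. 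As $L^4=16x^4+O(x^3)$ and $16\gamma_p x^4=16x^4-16(1-\gamma_p)x^4$ with $1-\gamma_p=\tfrac1p+\tfrac1{p^2}-\tfrac1{p^3}$, it is enough to prove that $T(x)=16(1-\gamma_p)x^4+O(x^3)$. Morally, $T(x)$ should be the density of singular matrices times the size of the box: $\#\{\bar M\in M(2,\F_p):\det\bar M=0\}=p^4-(p^2-1)(p^2-p)=p^3+p^2-p=p^4(1-\gamma_p)$.

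I would treat the ranges $x<p$ and $x\ge p$ separately. For $x<p$ an elementary divisor-type bound gives $T(x)=O(x^3)$, and since $16(1-\gamma_p)x^4=O(x^4/p)=O(x^3)$ in this range, that already yields the required identity. Indeed, write $T(x)=\sum_{r\in\Z/p\Z}N(r)^2$ with $N(r)=\#\{(a,d)\in I^2:ad\equiv r\}$. The only multiple of $p$ in $I$ is $0$, so $N(0)\le 2L=O(x)$; for $r\neq0$ one has $\sum_{r\neq0}N(r)^2=\#\{(a_1,d_1,a_2,d_2)\in I^4:\ a_1d_1\equiv a_2d_2\not\equiv0\}$, and fixing the (necessarily invertible) classes of $a_1,a_2,d_2$ pins down $d_1\bmod p$, while $I$ meets any residue class in at most $\lceil L/p\rceil\le 2$ integers, so this is $\le 2L^3=O(x^3)$. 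Hence $T(x)=O(x^2)+O(x^3)=O(x^3)$.

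For $x\ge p$ I would expand over residue classes. For $a\in\Z/p\Z$ set $n(a)=\#\{k\in I:k\equiv a\}$ and $\delta(a)=n(a)-L/p$, so that $\sum_{a}\delta(a)=0$ and $|\delta(a)|<1$ (because $n(a)\in\{\lfloor L/p\rfloor,\lceil L/p\rceil\}$). Grouping the matrices $A$ with $p\mid\det A$ by their reduction $\bar M=(m_{ij})$,
\begin{align*}
T(x)&=\sum_{\substack{\bar M\in M(2,\F_p)\\ \det\bar M=0}}\ \prod_{i,j}n(m_{ij})\\
&=\sum_{S\subseteq\{1,2\}\times\{1,2\}}\Big(\frac{L}{p}\Big)^{4-|S|}\ \sum_{\substack{\bar M\in M(2,\F_p)\\ \det\bar M=0}}\ \prod_{(i,j)\in S}\delta(m_{ij}).
\end{align*}
The term $S=\emptyset$ equals $(L/p)^4\,(p^3+p^2-p)=L^4(1-\gamma_p)=16(1-\gamma_p)x^4+O(x^3)$. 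For each of the $15$ nonempty $S$, the inner sum is at most the number of singular matrices in absolute value, i.e.\ $p^3+p^2-p=O(p^3)$, using $|\delta|<1$; hence the corresponding term is $O\big((L/p)^{4-|S|}p^3\big)=O\big(x^{4-|S|}p^{|S|-1}\big)$, which is $O(x^3)$ for every such $S$ once $x\ge p$ (the extreme case $|S|=4$ giving $O(p^3)$). Adding up the $16$ contributions gives $T(x)=16(1-\gamma_p)x^4+O(x^3)$, and together with the case $x<p$ this proves the lemma.

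The main obstacle is the full-support term $S=\{1,2\}\times\{1,2\}$, whose inner sum $\sum_{\det\bar M=0}\prod_{i,j}\delta(m_{ij})$ is essentially the centred multiplicative energy of the interval $[-x,x]$ modulo $p$; the only estimate I use for it is the trivial bound $O(p^3)$ coming from the count of singular matrices, and this is precisely why the argument must be split at $x=p$: for $x<p$ that bound is too weak (not $O(x^3)$), and one instead falls back on the direct count of the second paragraph. Everything else is routine — isolating $L^4\gamma_p$ as the main term and absorbing the lower-order $\delta$-sums, which are small thanks to $\sum_a\delta(a)=0$.
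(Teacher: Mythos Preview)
Your proof is correct. Both your argument and the paper's reduce to counting $T(x)=\#\{A\in M(2,\Z):h(A)\le x,\ p\mid\det A\}$ and showing $T(x)=16(1-\gamma_p)x^4+O(x^3)$, but the executions differ.

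The paper proceeds by a direct case analysis in the style of Lemma~\ref{countingdisc0}: split according to whether $ad\equiv 0\modp$ or not, use that any fixed residue class modulo $p$ meets $[-x,x]$ in $2x/p+O(1)$ integers, and multiply out. Because every factor carries an error of size $O(x)$ against a main factor of size $O(x^2)$, all cross terms are $O(x^3)$ uniformly in $x$, so no case split at $x=p$ is needed. Your approach is more systematic: you write $n(a)=L/p+\delta(a)$ and expand $\prod_{i,j}n(m_{ij})$ over subsets $S$, isolating the main term as the $S=\emptyset$ contribution. The price is that the full-support term $|S|=4$ is only bounded by $O(p^3)$, which forces the split at $x=p$ and the separate multiplicative-energy bound $T(x)=O(x^3)$ for $x<p$. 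That bound, incidentally, is a nice observation in its own right. In short: the paper's route is shorter and case-free; yours is more structural and makes the appearance of the density $1-\gamma_p=\#\{\bar M:\det\bar M=0\}/p^4$ transparent.

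One minor remark: you note that $\sum_a\delta(a)=0$ but never use it. In fact this orthogonality would let you improve the bound on the $|S|=1,2,3$ terms (the inner sum then factors with at least one free variable summing $\delta$ to zero over a coset), though it does not help with $|S|=4$, which is the genuine bottleneck you correctly identified.
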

\begin{proof}
 First we note that
\begin{equation*} 
\sum_{h(A)\leq x} \chi_1(A) = \# \{A \in M(2, \Z): \deter(A)\not\equiv 0 \modp, h(A)\leq x \}
\end{equation*} We count the complimentary set, i.e.,  matrices of
height up to $x$  that are singular modulo $p$ and this amounts to
counting $4$-tuples $(a,b,c,d)$ such that $-x \leq a, b, c, d \leq x$
and $ad-bc \equiv 0 \modp$. An elementary argument as in the proof of 
Lemma \ref{countingdisc0} shows that this number is
\begin{align*}
(4x^2(1-&1/p)^2+O(x))(4x^2(1/p-1/p^2)+O(x))+(4x^2(2/p-1/p^2)+O(x))^2\\
&=16x^4(1/p+1/p^2-1/p^3)
+O(x^3).
\end{align*}
 Upon subtracting this from $ (2[x]+1)^4=16x^4+O(x^3)$, the total number of
matrices of height up to $x$, the lemma follows.
\end{proof}

\subsection{ First steps towards the proof of Theorem \ref{lqprthm}}
In order to prove Theorem \ref{lqprthm}, we need to improve upon the term 
$O(p^{3+\ep})$ in Eq. \eqref{weak} above to
$O(p^{2+\ep})$. The estimate $O\left(p^{3+\ep}\right)$ arises from the estimate 
$tr(G(\rho, A))\leq d(\rho)p^2$ for the
Gauss sums.
Below we make a deeper analysis of the Gauss sums depending on whether $A$ is singular
or non-singular and also depending on what type of representation $\rho$ we have.

Recall that we have (see Eq. \eqref{AOC})
\begin{equation} \label{AOC2}
S(\delta_{\Omega_{prim}}, x)=
\sum_{\rho} c_{\rho} \sum_{h(A) \leq x} \chi_{\rho}(A)
\end{equation}
The first observation is that
in the Fourier expansion of $\delta_{\Omega_{prim}}$ given by
Eq. \eqref{Fourier}, the irreducible principal series do not
occur as $c_{\rho}=0$ for these representations (see Prop. \ref{fcdetail}).
Also,
for the representations $U_{\eta}$ where $\eta$ is a non-trivial character of $\F_p^{\ast}$, we note that the dimension 
$d(U_{\eta})=1$ and hence Theorem \ref{main} gives the bound 
\begin{equation}\label{onedim}
 \sum_{h(A) \leq x} \chi_{U_{\eta}}(A) \ll  p^2 (\log p)^4,
\end{equation}
which is good enough for our purpose.
Therefore, it is enough to consider the trivial representation, the Steinberg representation $St$, the  non-trivial twists of $St$, and
the cuspidal representations $X_{\phi}$. 
Now observe that if $A$ is non-singular, by Eq. \eqref{equivariance}, 
\[\tr(G(\rho, A))=g(\rho)\tr(\rho(A^{-1}),\]
where $|g(\rho)|\leq p^2$. A {\em striking fact} about the values of
irreducible characters of $GL(2,\F_p)$  that 
can be read off the character table for $GL(2, \F_p)$ 
(see \cite[Page 70, Section 5.2]{FH}) is  the following:
  \begin{proposition} \label{curi}
  Suppose $A$ is a $2\times 2$ integer matrix that reduces modulo $p$ to a non-singular matrix which
  is not central. Then for any non-trivial representation $\rho$ of $G$, we have the bound
\begin{equation} 
|\widehat{\chi_{\rho}}(A)|\leq 2p^{-2}.
\end{equation}
\end{proposition}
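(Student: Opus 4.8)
The plan is to reduce the bound on $|\widehat{\chi_\rho}(A)|$ to a bound on $|\tr(G(\rho,A))|$, via the identity $\widehat{\chi_\rho}(A) = p^{-4}\tr(G(\rho,-A))$ established just above. Since $A$ reduces to a non-singular, non-central matrix $\bar A$, equation \eqref{equivariance} gives $G(\rho,-A) = \rho(-A)^{-1}G(\rho,I_d) = g(\rho)\rho(-A)^{-1}$, so that $\tr(G(\rho,-A)) = g(\rho)\tr(\rho(-\bar A)^{-1})$. Taking absolute values and using Kondo's estimate $|g(\rho)| \le p^{(n^2-k(\rho))/2} \le p^2$ for $n=2$, the whole problem collapses to showing that $|\tr(\rho(B))| \le 2$ for every non-central $B \in GL(2,\F_p)$ and every non-trivial irreducible $\rho$ (here $B = -\bar A^{-1}$, which is again non-central).

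Next I would verify this character bound case by case using the classification recalled in \S\ref{irred} and the character table of $GL(2,\F_p)$ in \cite[\S 5.2]{FH}. A non-central element $B$ is conjugate to one of: a non-semisimple element $\bigl(\begin{smallmatrix} x & 1\\ 0 & x\end{smallmatrix}\bigr)$, a split semisimple element $\bigl(\begin{smallmatrix} x & 0\\ 0 & y\end{smallmatrix}\bigr)$ with $x \ne y$, or an elliptic semisimple element $d_{x,y}$. For each of the four families of non-trivial irreducibles — the one-dimensional $U_\eta$ (where $|\tr \rho(B)| = 1$ trivially), the principal series $I_{\chi,\eta}$, the twists $St_\chi$ of Steinberg, and the cuspidals $X_\phi$ — one reads off the character value on each class type and checks the bound. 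For instance $\tr(St_\chi(d_{x,y})) = -\chi(N(\zeta_{x,y}))$ has absolute value $1$ by \eqref{char:steinberg}; $\tr(X_\phi(d_{x,y})) = -(\phi(\zeta_{x,y}) + \phi(\zeta_{x,y}^p))$ has absolute value at most $2$ by \eqref{char:cuspidal}; and the remaining entries (principal series on split and non-split classes, Steinberg on split and unipotent classes, cuspidals on split and unipotent classes) are each, up to sign, either $0$, $\pm 1$, or a sum of two roots of unity, hence bounded by $2$ in modulus. The only values exceeding $2$ in the character table occur at \emph{central} elements (where the characters take the value $\pm d(\rho)$), which is exactly why the non-central hypothesis is essential.

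Assembling the pieces: $|\widehat{\chi_\rho}(A)| = p^{-4}|g(\rho)|\,|\tr(\rho(-\bar A^{-1}))| \le p^{-4}\cdot p^2 \cdot 2 = 2p^{-2}$, which is the claim. The main obstacle is not conceptual but organizational — one must be careful that the non-centrality of $\bar A$ is preserved under $B \mapsto -B^{-1}$ (it is, since inversion and negation permute central elements among themselves), and one must do an honest pass through every cell of the $GL(2,\F_p)$ character table to confirm no non-central entry has modulus larger than $2$. This is a finite, purely mechanical check, so I would present it compactly by organizing it as a small table or a short enumeration keyed to the conjugacy class types rather than writing out each computation in prose.
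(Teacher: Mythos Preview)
Your proposal is correct and follows essentially the same route as the paper: the paper notes that $\widehat{\chi_\rho}(A)=p^{-4}\tr(G(\rho,-A))=p^{-4}g(\rho)\tr(\rho((-A)^{-1}))$ with $|g(\rho)|\le p^2$, and then simply points to the character table in \cite[\S 5.2]{FH} for the bound $|\tr(\rho(B))|\le 2$ at non-central $B$. Your write-up is in fact more detailed than the paper's, which leaves the case-by-case verification entirely to the reader; your explicit remark that non-centrality is preserved under $B\mapsto -B^{-1}$ is a point the paper does not spell out.
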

This suggests that we should isolate the contribution of the scalar matrices after an application of the Plancherel formula
\begin{equation}\label{plan}
\sum_{h(A) \leq x} \chi_{\rho} (A) = p^4\sum_{B \in M(n,\F_p)} \widehat{\chi_{\rho}}(B) \overline{\widehat{\delta_{\bar{\mathbf{I}}}}(B)},
\end{equation}
where $\mathbf{I}$ is the interval 
\[
\mathbf{I}=\{A \in M(2, \Z): h(A) \leq x\}.
\]
Accordingly, we subdivide the resulting  sum over $B$ into three parts: (i)  over singular matrices, 
(ii) over scalar non-singular matrices and (iii) over non-singular matrices that are not scalar. 
However, we do this only for the cuspidal representation and the non-trivial twists of the Steinberg representation.
We treat the trivial and the Steinberg representation together in \S6.5 as they both contribute to the main term.
\subsection{Cuspidal representations and non-trivial twists of the Steinberg representation}
The result we want to prove here is:
\begin{proposition}\label{others}
 Suppose $\rho$ is either a cuspidal representation $X_{\phi}$ 
or a non-trivial twist of the Steinberg representation $St_{\eta}$.
 Then we have the bound
 \[ 
\sum_{h(A) \leq x} \chi_{\rho}(A) \ll x^2 p\log p +p^2(\log p)^4
\]
 \end{proposition}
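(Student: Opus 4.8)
The plan is to apply the Plancherel formula \eqref{plan} and split the sum over $B\in M(2,\F_p)$ into the three ranges indicated in \S6.4: singular matrices, non-singular scalar matrices, and non-singular non-scalar matrices. The first step is to deal with the singular range. For a cuspidal representation $X_\phi$, Theorem \ref{theorem:sgs}(\ref{vanishingpart}) says that $\tr(G(\rho,B))=0$ for every non-zero singular $B$, and by Remark \ref{charsumzero} the $B=0$ term contributes $\widehat{\chi_\rho}(0)=0$ as well; so for cuspidal $\rho$ there is simply no contribution from singular $B$. For a non-trivial twist $St_\eta=St\otimes(\eta\circ\deter)$ with $\eta\neq 1$, this representation is again excluded from the list in Theorem \ref{theorem:sgs}(\ref{vanishingpart}) (the theorem only spares $1_G$, $St$ itself, and $I_{\chi,1}$), so once more $\tr(G(St_\eta,B))=0$ on singular $B$. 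Hence in both cases the singular range contributes nothing and we are left with the two non-singular ranges.

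Next I would handle the non-singular non-scalar range using Proposition \ref{curi}: for such $B$ we have $|\widehat{\chi_\rho}(B)|\le 2p^{-2}$, a saving of a factor $p$ over the generic bound $d(\rho)p^{-2}$. Feeding this into \eqref{plan} and using the bound from Lemma \ref{sumoverB} (with $c=1$, so the implied constant $(1+\log p)^4$ — or its small-prime analogue),
\begin{equation*}
\Bigl| p^4 \sum_{\substack{B \text{ non-sing.}\\ B \text{ not scalar}}} \widehat{\chi_\rho}(B)\,\overline{\widehat{\delta_{\bar{\mathbf I}}}(B)} \Bigr|
\le p^4 \cdot 2p^{-2} \sum_{B\in M(2,\F_p)} \bigl|\widehat{\delta_{\bar{\mathbf I}}}(B)\bigr|
\ll p^2(\log p)^4,
\end{equation*}
which is absorbed into the claimed error term. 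Then I would treat the scalar non-singular range: there are only $p-1$ scalar matrices $B=\lambda I$, $\lambda\in\F_p^*$. For each, $\widehat{\chi_\rho}(\lambda I) = p^{-4}\tr(G(\rho,-\lambda I)) = p^{-4}g(\rho)\chi_\rho((-\lambda)^{-1}I)$ by \eqref{equivariance}, which has absolute value $\le d(\rho)p^{-2}\cdot|\chi_\rho(\text{scalar})|/d(\rho)=p^{-2}|\chi_\rho(\lambda^{-1}I)|$; since $\chi_\rho(\lambda I)=d(\rho)\omega_\rho(\lambda)$ for the central character $\omega_\rho$, we get $|\widehat{\chi_\rho}(\lambda I)|\le d(\rho)p^{-2}\le (p+1)p^{-2}$. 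Meanwhile $\widehat{\delta_{\bar{\mathbf I}}}(\lambda I)$ factors into a product of four one-dimensional exponential sums, one of which, $\sum_{|x|\le x_0} e_p(\lambda x)$ for $\lambda\neq 0$, is $\ll p/\lambda$-ish but more usefully is bounded by $\min(2x,1/\|\lambda/p\|)$; summing $|\widehat{\delta_{\bar{\mathbf I}}}(\lambda I)|$ over $\lambda\in\F_p^*$ gives $\ll p^{-4}\cdot (2x)^2 \cdot \bigl(\sum_{\lambda=1}^{p-1} (p/\lambda)^2\bigr)^{1/2}\cdots$ — the clean bookkeeping here is to note that along the scalar diagonal the four entry-intervals all equal $[-x,x]$ and $B=\lambda I$ has all diagonal entries $\lambda$, so $\widehat{\delta_{\bar{\mathbf I}}}(\lambda I) = p^{-4}\bigl(\sum_{|x|\le x_0} e_p(\lambda x)\bigr)^2 \cdot (2x_0+O(1))^2$, giving $|\widehat{\delta_{\bar{\mathbf I}}}(\lambda I)| \ll p^{-4} x^2 \min(x^2, \|\lambda/p\|^{-2})$. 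Then $p^4\sum_\lambda d(\rho)p^{-2}|\widehat{\delta_{\bar{\mathbf I}}}(\lambda I)| \ll d(\rho) x^2 \sum_{\lambda=1}^{p-1}\|\lambda/p\|^{-2}$, and $\sum_{\lambda=1}^{p-1}\|\lambda/p\|^{-2}\ll p^2$, which would give $d(\rho)x^2$ — too big. The right move is to keep only one factor of the exponential sum at its peak and bound the other by $1/\|\lambda/p\|$ and the two interval-length factors by $O(x)$, so $|\widehat{\delta_{\bar{\mathbf I}}}(\lambda I)|\ll p^{-4}x^3/\|\lambda/p\|$, whence
\begin{equation*}
p^4\sum_{\lambda\in\F_p^*} |\widehat{\chi_\rho}(\lambda I)|\,|\widehat{\delta_{\bar{\mathbf I}}}(\lambda I)|
\ll d(\rho)p^{-2}\cdot x^3 \sum_{\lambda=1}^{p-1}\frac{1}{\|\lambda/p\|}
\ll d(\rho)p^{-2}\cdot x^3 \cdot p\log p \ll x^3 p^{-1+o(1)}\log p.
\end{equation*}
Hmm, that is even stronger than needed; the honest statement is that however one organizes it, the scalar contribution is $\ll x^2 p\log p$ after using the trivial bound $|\widehat{\delta_{\bar{\mathbf I}}}(\lambda I)|\le p^{-4}(2x_0+1)^3\cdot\min(2x_0+1, \|\lambda/p\|^{-1})$ together with $d(\rho)\le p+1$, so $p^4 d(\rho)p^{-2}\cdot p^{-4}x^3\sum_\lambda\|\lambda/p\|^{-1}$ — which I will organize in the writeup to land on exactly $x^2p\log p$.

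Combining the three ranges — zero from singular $B$, $O(p^2(\log p)^4)$ from non-scalar non-singular $B$, and $O(x^2 p\log p)$ from scalar $B$ — gives the stated bound $\sum_{h(A)\le x}\chi_\rho(A)\ll x^2 p\log p + p^2(\log p)^4$. The main obstacle I anticipate is the scalar-matrix range: this is the one place where the extra power of $p$ in $d(\rho)$ is not killed by Proposition \ref{curi}, and one has to extract the saving instead from the oscillation of the exponential sum $\widehat{\delta_{\bar{\mathbf I}}}(\lambda I)$ across $\lambda\in\F_p^*$. Getting the exponents to balance correctly here — tracking which powers of $x$ come from interval lengths and which from the saved geometric sum, and invoking $\sum_{\lambda=1}^{p-1}\|\lambda/p\|^{-1}\ll p\log p$ — is the delicate bookkeeping; the singular range, by contrast, is immediate once one cites the vanishing part of Theorem \ref{theorem:sgs} and notes that neither cuspidals nor non-trivial Steinberg twists are on the exceptional list.
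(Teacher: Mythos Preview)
Your approach is exactly the paper's: apply Plancherel, split $B$ into singular, non-singular non-scalar, and non-singular scalar ranges, dispatching the first by Theorem~\ref{theorem:sgs}(\ref{vanishingpart}) and the second by Proposition~\ref{curi} together with Lemma~\ref{sumoverB}. That part is clean and correct.

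The scalar range, however, is where your write-up gets tangled, and it is worth straightening out. Your first computation is actually fine and is essentially what the paper does. With $B=\lambda I$ the Fourier transform factors as
\[
\widehat{\delta_{\bar{\mathbf I}}}(\lambda I)=p^{-4}(2[x]+1)^2\Bigl(\sum_{|a|\le x}e_p(-\lambda a)\Bigr)^{2},
\]
so bounding both diagonal exponential sums by $\|\lambda/p\|^{-1}$ and the two off-diagonal factors trivially by $O(x)$ gives
\[
p^{4}\sum_{\lambda\in\F_p^{*}}|\widehat{\chi_\rho}(\lambda I)|\,|\widehat{\delta_{\bar{\mathbf I}}}(\lambda I)|
\ \ll\ d(\rho)\,p^{-2}\,x^{2}\sum_{\lambda=1}^{p-1}\|\lambda/p\|^{-2}
\ \ll\ d(\rho)\,x^{2}\ \ll\ p\,x^{2},
\]
since $\sum_{\lambda}\|\lambda/p\|^{-2}\ll p^{2}$. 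This is already $\ll x^{2}p\log p$; you dismissed it as ``too big'' by mistake. The paper records the same computation (with a harmless extra $\log p$) and moves on.

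Your second attempt contains an arithmetic slip: $d(\rho)p^{-2}\cdot x^{3}\cdot p\log p$ with $d(\rho)\asymp p$ is $\asymp x^{3}\log p$, not $x^{3}p^{-1+o(1)}\log p$; you dropped a factor of $p$. That bound is still acceptable (since we may take $x<p$), but it is weaker, not stronger, than the first one. So keep your first estimate and drop the detour.
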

 \begin{proof}
We apply Eq. \eqref{plan} and
and split the sum on the right hand side into three parts as described at the end of the previous subsection.
The contribution of part (i) is zero by part (1) of Theorem \ref{theorem:sgs}. 
For part (iii), i.e., when  $B$ is non-singular and not scalar, 
we have the bound $\widehat{\chi_{\rho}}(B)  \ll p^{-2}$ 
by
Prop. \ref{curi}. Also, recall that by Lemma \ref{sumoverB}
we have the bound 
\[
\sum_{B \in M(n,\F_p)} {\widehat{\delta_{\bar{\mathbf{I}}}}}(B) \ll(\log p)^4.
\]
This gives the bound $O(p^2(\log p)^4)$ for 
the sum over non-singular and non-scalar matrices. \\
For part (ii), we need to consider  the sum over  non-singular scalar
matrices for characters coming   from $St_{\eta}$ and $X_{\phi}$.  For
$St_{\eta}$, its character takes the value $p \eta (a^2)$, and for
$X_{\phi}$, its character takes the value $(p-1)\phi (a)$ on the
central elements $ \begin{pmatrix} a & 0\\ 0 & a\end{pmatrix}$. 
By $I$ we will denote the identity matrix in $GL(2, \F_p)$ and by $X$
we will denote a variable matrix  
$ \begin{pmatrix} x_{11} & x_{12}\\
x_{21} & x_{22}\end{pmatrix}$.
We recall that (see Equations \eqref{schur} and \eqref{equivariance})
for non-singular $B$,
\[
\widehat{\chi_{\rho}}(B)= g(\rho)\chi_{\rho}(B^{-1}).
\]
Therefore, the sum we need to estimate is
\begin{align*}
p^4\sum_{B=bI, b \not\equiv 0 \modp}\widehat{\chi_{\rho}}(B) 
\overline{\widehat{\delta_{\bar{\mathbf{I}}}}(B)}
=p^{-4}g(\rho) \sum_{b\in \F_p^{\ast}}\chi_{\rho}(b^{-1}I)
\sum_{X \in \bar{\mathbf{I}}} e\left(\frac{b(x_{11}+x_{22})}{p}\right),
\end{align*}
for $\rho =X_{\phi}$ or $St_{\eta}$.
 First we consider the case of cuspidal representations $X_{\phi}$ attached to a 
 character $\phi:\F_{p^2}^*\to \C^*$ satisfying $\phi\neq \phi^p$.
The above sum becomes
\begin{align*}
p^{-4}g(X_{\phi})& \sum_{b\in \F_p^{\ast}}(p-1)\overline{\phi(b)}
\sum_{X \in \bar{\mathbf{I}}} e\left(\frac{b(x_{11}+x_{22})}{p}\right).
\end{align*}
Now we  factor the above  exponential sum and the sums involving $x_{11}$ and $x_{22}$ are estimated by Lemma
\ref{linearsum}, while the sums over $x_{12}$ and $x_{21}$ are  bounded trivially. Thus the above sum is
\begin{align*} 
p^{-4}(p-1)g(X_{\phi})&\sum_{b\in \F_p^{\ast}}\phi(b)\sum_{x_{12}}\sum_{x_{21}}\sum_{x_{11}} e\left(\frac{-bx_{11}}{p}\right) 
\sum_{x_{22}}e\left(\frac{-bx_{22}}{p}\right)\\
&\ll p^{-4}(p-1)p^2 x^2 \sum_{b\in \F_p^{\ast}}|\phi(b) |||b/p||^{-2}\\
&\ll x^2 p \log p,
\end{align*}
where we have used \eqref{kondo-exact} to bound $g(X_{\phi})$.\\
For the characters associated to the representations 
of the type $St_{\eta}$, the treatment is similar. In this case,
$
 \chi_{St_{\eta}}(bI)=\eta (b^2)={\eta}^2 (b)
$
and we obtain the sum
\[
 p^{-4}(p-1)g(St_{\eta})\sum_{X \in \bar{\mathbf{I}}}\sum_{b\in \F_p^{\ast}}{\eta}^2 (b)e\left(\frac{-b(x_{11}+x_{22})}{p}\right).
\]
Proceeding as before we find that this sum is also $O(x^2p\log p)$.
\end{proof}

\subsection{The main term} 
We still have to consider the trivial representation $1_G$ and the Steinberg representation $St$. They are `closely
related', in that they are the components of the representation
parabolically induced from the trivial representation of the Borel subgroup. 
The character values of the trivial and the Steinberg represention
are equal on split semisimple conjugacy classes, and equal but of
opposite sign at the elliptic semisimple conjugacy classes.
This suggests that not just the trivial character, but both the trivial character and the Steinberg character contribute to the main term. This
is the reason we have postponed the treatment of these two representations thus far and we shall now analyze their contribution.

From Eq. \eqref{AOC2}, we write
\begin{equation}\label{lang}
S(\delta_{\Omega_{prim}}, x)= c_1\sum_{h(A) \leq x} \chi_1(A)+c_{St}\sum_{h(A) \leq x} \chi_{St}(A)+
 \sum_{\rho}c_{\rho}\sum_{h(A) \leq x} \chi_{\rho}(A),
\end{equation}
where we recall that $c_1$ is the Fourier coefficient for the trivial representation; i.e., $c_1=c_{1_G}$ and 
$\rho$ runs over  representations that are not isomorphic to $1_G$ or to $St$.
We recall that by part (ii) of  Prop. \ref{fcdetail},
$-c_{St}=c_1=|\Omega_{prim}|/|G|.$
Note that
the Steinberg character vanishes for  non-semisimple conjugacy
classes and the character values of $1_G$ and $St$
are equal on split semisimple conjugacy classes, and equal but of
opposite sign at the elliptic semisimple conjugacy classes. Also, we recall that on the central elements, the value of the character $\chi_{St}$
is $p$.

Using the above facts, the total contribution of $1_G$ and $St$ to the sum in Eq. \eqref{lang} is given by
\begin{align*}
c_1\sum_{h(A) \leq x} \chi_1(A)&+c_{St}\sum_{h(A) \leq x}
  \chi_{St}(A)
=\frac{2|\Omega_{prim}|}{|G|}
S(\delta_{\Omega_e}, x) +\frac{|\Omega_{prim}|}{|G|}(1-p)x\\
&= \frac{|\Omega_{prim}|}{|G|}16(1-2/p+1/p^2)x^4+O(x^3 \sqrt{p}\log p) +O(xp),
\end{align*}
by Prop. \ref{thm:growth-elliptic}. 
Combining this estimate with  Prop. \ref{others}, Eq. \eqref{onedim}, 
estimates on Fourier coefficients of $\delta_{\Omega_{prim}}$ given by
Lemma \ref{sumfc}, and arguing as in the proof of Prop.
\ref{weaklqpr}, we obtain the asymptotic formula 
\begin{align*}
S(\delta_{\Omega_{prim}}, x)&= c_1\sum_{h(A) \leq x}\chi_1(A)
+c_{St}\sum_{h(A) \leq x} \chi_{St}(A)+
 \sum_{\rho}c_{\rho}\sum_{h(A) \leq x} \chi_{\rho}(A),\\
&=\frac{|\Omega_{prim}|}{|G|}16(1-2/p+1/p^2)x^4+O(x^3 \sqrt{p}\log p)+O(x^2p\log p)+O(p^{2+\ep}),
\end{align*}
from which the theorem follows.

\begin{remark}Note that Prop. \ref{thm:growth-elliptic} is an ingredient in
  the  proof of Theorem \ref{lqprthm}. Thus, both the $GL(1)$  version (i.e., the classical one)
  and the $GL(2)$-analogue of the  Polya-Vinogradov inequality have been used
  in   the proof of Theorem \ref{lqprthm}.
  
It is to be expected that for similar applications for  $GL(n)$ or more generally for a  reductive group $G$,  one will have to invoke Polya-Vinogradov type results attached to Levi components of parabolics in $G$. 
\end{remark}

\subsection{A different approach towards counting primitive elements}\label{Perel}
In \cite{PS}, Perel'muter and Shparlinski proves the following theorem.
\begin{theorem}\label{Pe-Sh}
Suppose $\theta\in \F_{p^n}$ is such that $\F_p (\theta)=F_{p^n}$. Then the number of integers $m, 0\leq m \leq x $ such that $\theta+m$ is a generator
of the cyclic group $\F_{p^n}^{\ast}$ is $$\frac{\phi(p^n -1)}{p^n -1} x+O(p^{1/2+\ep}).$$ 
\end{theorem}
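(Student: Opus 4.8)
The plan is to express the indicator function of generators of $\F_{p^n}^\ast$ via Lemma \ref{shapiro} and then reduce the counting problem to bounds on incomplete character sums of the shape $\sum_{0\le m\le x}\chi(\theta+m)$, where $\chi$ runs over multiplicative characters of $\F_{p^n}^\ast$. The main term will come from the trivial character, and the error from the nontrivial ones via a Polya-Vinogradov type completion followed by Weil's bound for the complete sums.

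\textbf{Step 1: Character expansion.} Since $\theta+m\in\F_{p^n}^\ast$ for all but at most one value of $m$ in $[0,x]$ (the one with $\theta+m=0$, which occurs only if $-\theta\in\F_p$, a case we handle trivially by shifting), Lemma \ref{shapiro} applied to the cyclic group $C=\F_{p^n}^\ast$ gives
\[
\sum_{0\le m\le x}\delta_{\mathrm{gen}}(\theta+m)=\sum_{d\mid p^n-1}\frac{\mu(d)}{d}\sum_{\chi^d=\chi_0}\;\sum_{0\le m\le x}\chi(\theta+m).
\]
The contribution of $\chi=\chi_0$ to the inner-most sum is $x+O(1)$ (with the $O(1)$ accounting for the excluded $m$), and summing $\sum_{d\mid p^n-1}\mu(d)/d=\phi(p^n-1)/(p^n-1)$ yields the main term. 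So it remains to bound, for each nontrivial $\chi$ of order dividing $p^n-1$,
\[
T(\chi):=\sum_{0\le m\le x}\chi(\theta+m),
\]
and to show $\sum_{d\mid p^n-1}\frac{1}{d}\sum_{\chi^d=\chi_0,\ \chi\ne\chi_0}|T(\chi)|=O(p^{1/2+\ep})$.

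\textbf{Step 2: Completion and Weil bound.} Here $x<p$ (otherwise the statement is weaker than trivial since $m$ ranges over a set of size $>p$ and we can reduce to $x<p$ after peeling off full progressions modulo $p$, though $\chi$ is not periodic mod $p$ on $\F_{p^n}$, so in fact one keeps $m$ as an honest integer variable with $x\le p^{n}$; the interesting range is $x$ small). We complete the sum over $m\bmod p^{\lceil\log_p x\rceil}$ or, more cleanly, we write $\delta_{[0,x]}(m)$ as a Fourier integral / use the standard estimate $\sum_{m\le x}\chi(\theta+m)=\frac{1}{p}\sum_{h}\widehat{\delta}(h)\sum_{m\bmod p}\chi(\theta+m)e_p(hm)$ after splitting $[0,x]$ into residue classes mod $p$; alternatively, and most transparently, one appeals directly to the known Polya-Vinogradov estimate for incomplete sums of multiplicative characters along shifted intervals in $\F_{p^n}$ (e.g. via the additive-character expansion of $\chi\circ(\theta+\cdot)$ over $\F_p$), obtaining
\[
|T(\chi)|\ll p^{1/2}\log p
\]
whenever $\chi$ is nontrivial, the key input being Weil's bound $\big|\sum_{t\in\F_p}\chi(\theta+t)e_p(ht)\big|\le (n-1)\sqrt p$ for the complete sums, which is valid because $\F_p(\theta)=\F_{p^n}$ forces the relevant $L$-function / character sum to be nondegenerate (the rational function $\chi$-twist is not a $p$-th power in the relevant sense). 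This is essentially the content of \cite{PS}.

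\textbf{Step 3: Summing over characters.} Finally, $\sum_{d\mid p^n-1}\frac{1}{d}\,\#\{\chi:\chi^d=\chi_0\}\cdot p^{1/2}\log p=\sum_{d\mid p^n-1}\frac{1}{d}\cdot d\cdot p^{1/2}\log p=\tau(p^n-1)p^{1/2}\log p\ll p^{1/2+\ep}$, using $\tau(m),\log m\ll m^{\ep}$. Combining with Step 1 gives the asserted formula. \textbf{The hard part} is Step 2: establishing $|T(\chi)|\ll p^{1/2+o(1)}$ uniformly in nontrivial $\chi$, which rests squarely on Weil's Riemann Hypothesis for curves over finite fields (the completion step is routine Polya-Vinogradov, but the nontrivial complete-sum bound is not elementary); this is precisely the feature that distinguishes this approach from the representation-theoretic proof of Theorem \ref{lqprthm} given in \S6, and it is why we have merely quoted Theorem \ref{Pe-Sh} from \cite{PS} rather than reproving it.
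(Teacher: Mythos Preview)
Your proposal is correct and follows essentially the same route as the paper's sketch: expand the indicator of generators via Lemma~\ref{shapiro}, extract the main term from the trivial character, bound the incomplete sums $\sum_{0\le m\le x}\chi(\theta+m)$ by completion against additive characters mod $p$ and the Weil bound $\bigl|\sum_{t\in\F_p}\chi(\theta+t)e_p(ht)\bigr|\ll\sqrt p$, and finally sum over characters using $\tau(p^n-1)\ll p^{\ep}$. The only quibble is that your aside about completing ``mod $p^{\lceil\log_p x\rceil}$'' is a red herring---the completion is just modulo $p$, exactly as the paper states---and note that since $\F_p(\theta)=\F_{p^n}$ with $n\ge2$, the value $\theta+m$ is never zero for $m\in\F_p$, so the parenthetical hedge in Step~1 is unnecessary.
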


We briefly describe how this is proved. 
After expanding the indicator function of the set of primitive roots in terms of character sums (see Lemma \ref{shapiro})
and collecting the main term arising from the trivial character, all we need is the following bound
for  non-trivial characters $\chi$ of $\F_q^{\ast}$:
\begin{equation}\label{PS}
\sum_{0\leq m\leq x}\chi(\theta+m)\ll \sqrt{p}\log p,
\end{equation}
 where  the implied constant  depends only on $n$ (in fact, one can take the constant to be $n$). 
This bound, in turn, follows by standard analytic methods (see \cite[Chap. 12]{IK}) from the bound on the complete exponential sum given below:
\begin{equation}
\sum_{m \in \F_p}\chi(\theta+m)e_p(ma) \ll \sqrt{p},
\end{equation}
for any $a \in \F_p^{\ast}$. 
This beautiful result due to Perel'muter and Shparlinski is an ingenious application of the Riemann Hypothesis for curves over finite fields proved by Weil.
See \cite{Katz} for a different approach for a special case of the above sum where the additive character is trivial.

We now give a different proof of Theorem \ref{lqprthm} using Theorem \ref{Pe-Sh} and Prop. \ref{thm:growth-elliptic}. Let us consider the set of $S(\Omega_{prim}, x)$ all integer matrices of height up to $x$ that reduces to primitive elements modulo $p$.  We partition this set according to the equivalence relation given by $A_1 \sim A_2$ if and only if $A_1- A_2$ is an integer multiple of the identity matrix. Now, each equivalence class is of the form $\{B+nI: n\in \Z, h(B+nI)\leq x\}$, where $B$ is some fixed elliptic element. For $x<p$, every element in an equivalence class is elliptic and every such class
 has $2[x]+1$ elements. Since the total number of elliptic elements of height up to $x$ is $8(1-2/p+1/p^2)x^4+O(x^3\sqrt{p}\log p)$ by Prop.  \ref{thm:growth-elliptic}, it follows that the number of equivalence class is 
 $$4(1-2/p+1/p^2)x^3+O(x^2\sqrt{p}\log p).$$ 
Now, by Theorem \ref{Pe-Sh}, every
 equivalence class has 
 $$\frac{\phi(p^2 -1)}{p^2 -1} (2x)+O(p^{1/2+\ep})$$
 many primitive elements. Therefore, after multiplication, we obtain a result of the same strength as Theorem 
 \ref{lqpr}, the difference being in the precise shape of the error term.

\end{document}